\theoremstyle{plain}
\numberwithin{equation}{section}
\theoremstyle{plain}
\newtheorem{theorem}{Theorem}[section]
\newtheorem{corollary}[theorem]{Corollary}
\newtheorem{lemma}[theorem]{Lemma}
\newtheorem{proposition}[theorem]{Proposition}
\theoremstyle{definition}
\newtheorem{example}[theorem]{Example}
\newcommand{\ands}{\quad\mbox{and}\quad}
\newcommand{\kernel}{{\mathrm{Ker}}}
\newcommand{\degr}{{\mathrm{deg} }}
\newcommand{\Dom}{{\mathrm{Dom}}}
\newcommand{\Rat}{{\mathrm{Rat}}}
\newcommand{\Ran}{{\mathrm{Ran}}}
\newcommand{\cP}{{\mathcal P}}
\newcommand{\cQ}{{\mathcal Q}}
\newcommand{\cH}{{\mathcal H}}
\newcommand{\cD}{{\mathcal D}}
\newcommand{\BC}{{\mathbb C}}
\newcommand{\BT}{{\mathbb T}}
\newcommand{\BR}{{\mathbb R}}
\newcommand{\BD}{{\mathbb D}}
\newcommand{\BN}{{\mathbb N}}
\newcommand{\BP}{{\mathbb P}}
\newcommand{\BZ}{{\mathbb Z}}
\newcommand{\mat}[2]{\ensuremath{\left[\begin{array}{#1}#2\end{array}\right]}}
\newcommand{\ov}[1]{{\overline{#1}}}
\newcommand{\inn}[2]{\ensuremath{\langle #1,#2 \rangle}}
\newcommand{\tu}[1]{\textup{#1}}
\newcommand{\wtil}[1]{{\widetilde{#1}}}
\newcommand{\al}{\alpha}
\newcommand{\ga}{\gamma}
\newcommand{\de}{\delta}
\newcommand{\la}{\lambda}
\newcommand{\si}{\sigma}
\newcommand{\vph}{\varphi}
\newcommand{\om}{\omega}
\begin{document}

\title[A Toeplitz-like operator with symbol having poles on the unit circle III]{A Toeplitz-like operator with rational symbol having poles on the unit circle III: the adjoint}

\author[G.J. Groenewald]{G.J. Groenewald}
\address{G.J. Groenewald, Department of Mathematics, Unit for BMI, North-West
University, Potchefstroom, 2531 South Africa}
\email{Gilbert.Groenewald@nwu.ac.za}

\author[S. ter Horst]{S. ter Horst}
\address{S. ter Horst, Department of Mathematics, Unit for BMI, North-West
University, Potchefstroom, 2531 South Africa}
\email{Sanne.TerHorst@nwu.ac.za}

\author[J. Jaftha]{J. Jaftha}
\address{J. Jaftha, Numeracy Centre, University of Cape Town, Rondebosch 7701; Cape Town; South Africa}
\email{Jacob.Jaftha@uct.ac.za}

\author[A.C.M. Ran]{A.C.M. Ran}
\address{A.C.M. Ran, Department of Mathematics, Faculty of Science, VU university Amsterdam, De Boelelaan 1081a, 1081 HV Amsterdam, The Netherlands and Unit for BMI, North-West~University, Potchefstroom, South Africa}
\email{a.c.m.ran@vu.nl}

\thanks{This work is based on the research supported in part by the National Research Foundation of South Africa (Grant Number 90670 and 93406).}

\subjclass[2010]{Primary 47B35, 47A53; Secondary 47A68}

\keywords{Toeplitz operators, unbounded operators, adjoint, symmetric operators}

\begin{abstract}
This paper contains a further analysis of the Toeplitz-like operators $T_\om$ on $H^p$ with rational symbol $\om$ having poles on the unit circle that were previously studied in \cite{GtHJR1,GtHJR2}. Here the adjoint operator $T_\om^*$ is described. In the case where $p=2$ and $\om$ has poles only on the unit circle $\BT$, a description is given for when $T_\om^*$ is symmetric and when $T_\om^*$ admits a selfadjoint extension. Also in the case where $p=2$, $\om$ has only poles on $\BT$ and in addition $\om$ is proper, it is shown that $T_\om^*$ coincides with the unbounded Toeplitz operator defined by Sarason in \cite{S08}.
\end{abstract}

\maketitle

\section{Introduction}

In this paper we proceed with our study of unbounded Toeplitz-like operators on $H^p$ with rational symbols that have poles on the unit circle $\BT$ which was initiated in \cite{GtHJR1}. Our previous work on such Toeplitz-like operators focused on their Fredholm properties (in \cite{GtHJR1}) and the various parts of their spectra (in \cite{GtHJR2}). Here we determine properties of the adjoint operator and conditions under which the operator is symmetric and when it has a selfadjoint extension.

Before we can define our Toeplitz-like operators, some notation has to be introduced. We write $\Rat$ for the space of rational complex functions, $\Rat(\BT)$ for the subspace of $\Rat$ consisting of rational complex functions with poles only on the unit circle $\BT$, and $\Rat_0(\BT)$ for the subspace of strictly proper functions in $\Rat(\BT)$. Now let $\om\in\Rat$, possibly with poles on $\BT$. As in \cite{GtHJR1}, we define the Toeplitz-like operator $T_\om\, (H^p\to H^p)$, for $1<p<\infty$, via
\begin{equation}\label{Tom}
\Dom(T_\om)=\{ g\in H^p\mid \om g =f+\rho,\mbox{ with $f\in L^p,\, \rho\in \Rat_0(\BT)$}\},\quad T_\om g=\BP f.
\end{equation}
Here $\BP$ is the Riesz projection of $L^p$ onto $H^p$. The operator $T_\om$ is densely defined and closed. In case $\om\in \Rat(\BT)$, explicit formulas for the domain, kernel, range, and a complement of the range were obtained in \cite{GtHJR2}, as an extension of a result in \cite{GtHJR1} for the case where $T_\om$ is Fredholm. We briefly recall these results in Section \ref{S:Tom*}, as they will be frequently used throughout the paper.

In case $\om$ has no poles on $\BT$, in fact for any $\om\in L^\infty$, the adjoint of the Toeplitz operator $T_\om$ on $H^p$ can be identified with the Toeplitz operator $T_{\om^*}$ on $H^{p'}$, with $1<p'<\infty$ such that $1/p + 1/p' =1$ and with $\om^*$ defined as $\om^*(z)=\ov{\om(z)}$ on $\BT$. The identification of $(H^p)'$ and $H^{p'}$ goes via the usual pairing
\[
\inn{f}{g}_{p,p'}=\frac{1}{2\pi}\int_{\BT} \ov{g(z)} f(z)\, dz\quad (f\in H^p, g\in H^{p'}).
\]
In the sequel we use the same notation for the similarly defined pairing between $L^p$ and $L^{p'}$ to identify $(L^p)'$ and $L^{p'}$, and in both cases the indices will often be omitted.

For the Toeplitz-like operators studied in this paper the situation is more complicated than for Toeplitz operators with $L^\infty$ symbols. However, we do obtain that $T_{\om}^*$ can be identified with the restriction of the Toeplitz-like operator $T_{\om^*}$ on $H^{p'}$ to a dense subspace of its domain. Like for the operator $T_\om$, in case $\om$ is in $\Rat(\BT)$ we obtain a more explicit description of $T_{\om}^*$, which we present after introducing some further notation.

Throughout the paper $\cP$ denotes the space of complex polynomials and $\cP_k$, for any non-negative integer $k$, denotes the subspace of $\cP$ of polynomials of degree at most $k$. The degree of a polynomial $r\in\cP$ is denoted as $\degr(r)$. Given $r\in\cP$ with $\degr(r)=k$, say $r(z)=r_0+z r_1+\cdots + z^k r_k$, we define the polynomial $r^\sharp$ by
\[
r^\sharp(z)=z^k \ov{r(1/\ov{z})} =\overline{r_0}z^k + \ov{r_1} z^{k-1}+\cdots + \ov{r_k}.
\]

The following theorem is our first main result.

\begin{theorem}\label{T:main1}
Let $\om = s/q\in\Rat$ with $s,q\in\cP$ co-prime and $1<p<\infty$. Factor $s=s_-s_0s_+$ and $q=q_-q_0q_+$ with $s_-,q_-$ having roots only inside $\BT$, $s_0,q_0$ having roots only on $\BT$, and $s_+,q_+$ having roots only outside $\BT$. Set $m=\deg(q)$, $n=\deg(s)$,  $m_{\pm}=\degr(q_{\pm})$, $n_{\pm}=\degr(s_{\pm})$ $m_{0}=\degr(q_{0})$, $n_{0}=\degr(s_{0})$ and let $1<p'<\infty$ with $1/p + 1/p'=1$. Then
\begin{equation}\label{Tom*}
\Dom (T_\om^*) = (q_0)^\sharp H^{p'}\subset \Dom(T_{\om^*}) \ands
T_\om^* = T_{\om^*}|_{(q_0)^\sharp H^{p'}}.
\end{equation}
Furthermore, we have
\begin{equation}\label{Tom*RanKer}
\begin{aligned}
\Ran (T_\om^*) &= T_{z^{m-n}(s_+)^\sharp/(q_+)^\sharp} Q_{n_0+n_--m_0-m_-} (s_0)^\sharp H^{p'},\\
\kernel (T_\om^*) &= \left\{ \frac{(q_-)^\sharp (q_0)^\sharp r}{(s_-)^\sharp} \mid \deg(r) < n_- - m_- - m_0 \right \}.
\end{aligned}
\end{equation}
Here $Q_{k}=I_{H^{p'}}-P_{\cP_{k-1}}$, with $P_{\cP_{k-1}}$ the standard projection in $H^{p'}$ onto $\cP_{k-1}\subset H^{p'}$ to be interpreted as $0$ if $k\leq 0$, i.e., $Q_{k}=I_{H^{p'}}$ if $k\leq 0$. Thus, for $n_0+n_- \leq m_0+m_-$ we have $\Ran (T_\om^*) = T_{z^{m-n}/(q_+)^\sharp} (s_+s_0)^\sharp H^{p'}$.
Moreover,
\[
\dim \kernel (T_\om^*) = \max\left \{0, \#\{\textrm{zeroes of } \om \textrm{ inside } \BD \} - \#\{\textrm{poles of } \om \textrm{ in } \overline{\BD}\} \right \},
\]
where the multiplicities of the zeroes and poles are taken into account. Hence, $\dim \kernel(T_\om^*)$ is the maximum of $0$ and $n_- -m_- - m_0$. In particular, $T_{\om}^*$ is injective if and only if the number of poles of $\om$ inside $\overline{\BD}$ is greater than or equal to the number of zeroes of $\om$ inside $\BD$, multiplicities taken into account.
\end{theorem}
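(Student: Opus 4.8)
The plan is to first reduce to symbols with poles only on $\BT$ and then compute the adjoint by duality. Split $\om=\om_\infty+\om_\BT$, where $\om_\BT\in\Rat_0(\BT)$ is the sum of the (strictly proper) principal parts of $\om$ at the roots of $q_0$, and $\om_\infty$ collects the polynomial part together with the principal parts at the roots of $q_-$ and $q_+$. Since the poles of $\om_\infty$ lie off $\BT$, we have $\om_\infty\in L^\infty$, so $T_{\om_\infty}$ is bounded with $T_{\om_\infty}^*=T_{\om_\infty^*}$. A direct check from \eqref{Tom} gives $\Dom(T_\om)=\Dom(T_{\om_\BT})$ and $T_\om=T_{\om_\infty}+T_{\om_\BT}$ on this common domain; as $T_{\om_\infty}$ is bounded and everywhere defined, it follows that $T_\om^*=T_{\om_\infty}^*+T_{\om_\BT}^*$ with $\Dom(T_\om^*)=\Dom(T_{\om_\BT}^*)$. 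The problem is thereby reduced to computing $T_{\om_\BT}^*$. I also record the identity $\om^*=z^{m-n}s^\sharp/q^\sharp$ (from $\ov z=1/z$ on $\BT$), which exhibits $\om^*$ as a rational function whose inside/on/outside numerator and denominator factors are $(s_+)^\sharp,(s_0)^\sharp,(s_-)^\sharp$ and $(q_+)^\sharp,(q_0)^\sharp,(q_-)^\sharp$, the roles of inside and outside being interchanged relative to $\om$; in particular $(q_0)^\sharp$ cancels the only poles of $\om^*$ on $\BT$, so $(q_0)^\sharp H^{p'}\subset\Dom(T_{\om^*})$.

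For the core step take $g\in\Dom(T_{\om_\BT})$, so $\om_\BT g=f+\rho$ with $f\in L^p$, $\rho\in\Rat_0(\BT)$, and take $h\in H^{p'}$ with (for the moment) $h\in\Dom(T_{(\om_\BT)^*})$, say $(\om_\BT)^* h=\tilde f+\tilde\rho$. Using $h\in H^{p'}$ and the Riesz projection, $\inn{T_{\om_\BT}g}{h}=\inn{f}{h}$ and $\inn{g}{T_{(\om_\BT)^*}h}=\inn{g}{\tilde f}$. Writing $\check h(z)=\ov{h(1/\ov z)}$, which equals $\ov h$ on $\BT$, and using $\ov{(\om_\BT)^*}=\om_\BT$ on $\BT$, the terms containing $\om_\BT$ cancel pointwise almost everywhere, and the difference collapses to a pairing of the strictly proper parts,
\[
\inn{T_{\om_\BT}g}{h}-\inn{g}{T_{(\om_\BT)^*}h}=\frac{1}{2\pi}\int_{\BT}\big(\ov{\tilde\rho}\,g-\check h\,\rho\big)\,dz,
\]
which a residue computation identifies with a finite sum of contributions at the roots of $q_0$. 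The crux is to show that this vanishes for every $g\in\Dom(T_{\om_\BT})$ exactly when $(q_0)^\sharp$ divides $h$: divisibility forces $\check h$ to vanish at each root of $q_0$ to the order of the corresponding pole of $\om_\BT$, annihilating every residue; conversely, if $(q_0)^\sharp\nmid h$, one uses the explicit description of $\Dom(T_{\om_\BT})$ from Section~\ref{S:Tom*} to produce functions $g$ making $g\mapsto\inn{T_{\om_\BT}g}{h}$ unbounded, so that $h\notin\Dom(T_{\om_\BT}^*)$. This gives $\Dom(T_{\om_\BT}^*)=(q_0)^\sharp H^{p'}$ and $T_{\om_\BT}^*=T_{(\om_\BT)^*}|_{(q_0)^\sharp H^{p'}}$; recombining with the first paragraph (using $\om^*=\om_\infty^*+(\om_\BT)^*$) yields \eqref{Tom*}. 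I expect this residue-and-unboundedness analysis, which fixes the domain, to be the main obstacle.

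The kernel follows from the general identity $\kernel(T_\om^*)=\Ran(T_\om)^{\perp}$, valid for any densely defined operator (indeed $h\in\kernel(T_\om^*)$ iff $\inn{T_\om g}{h}=0$ for all $g\in\Dom(T_\om)$, i.e. $h\perp\Ran(T_\om)$). Inserting the explicit range of $T_\om$ from \cite{GtHJR2} (recalled in Section~\ref{S:Tom*}) and computing its annihilator in the pairing yields the stated family $(q_-)^\sharp(q_0)^\sharp r/(s_-)^\sharp$ with $\degr(r)<n_--m_--m_0$. For the range I use \eqref{Tom*}: with $h=(q_0)^\sharp\tilde h$ the factor $(q_0)^\sharp$ cancels in $\om^* h=z^{m-n}s^\sharp h/q^\sharp$, so $T_\om^*$ acts on $(q_0)^\sharp H^{p'}$ as $\tilde h\mapsto\BP(\psi\tilde h)$ with $\psi=z^{m-n}s^\sharp/((q_-)^\sharp(q_+)^\sharp)\in L^\infty$. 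Splitting off the analytic factor $\phi=(s_-)^\sharp(s_0)^\sharp/(q_-)^\sharp\in H^\infty$ gives $T_\psi=T_{\psi_1}T_\phi$ with $\psi_1=z^{m-n}(s_+)^\sharp/(q_+)^\sharp$, whence $\Ran(T_\om^*)=T_{\psi_1}(\phi H^{p'})$; since $(s_-)^\sharp$ and $(q_-)^\sharp$ have no zeros in $\ov{\BD}$ they are invertible in $H^\infty$, so $\phi H^{p'}=(s_0)^\sharp H^{p'}$, and evaluating $T_{\psi_1}$ on $(s_0)^\sharp H^{p'}$ with the degree shift coming from $z^{m-n}$ produces the projection $Q_{n_0+n_--m_0-m_-}$ and hence the asserted formula, with the case $n_0+n_-\le m_0+m_-$ (where $Q=I$) following at once.

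Finally, the dimension count and injectivity are read off from the kernel formula. The assignment $r\mapsto(q_-)^\sharp(q_0)^\sharp r/(s_-)^\sharp$ is injective, being multiplication by a fixed nonzero rational function, so $\dim\kernel(T_\om^*)=\dim\{r\in\cP:\degr(r)<n_--m_--m_0\}=\max\{0,\,n_--m_--m_0\}$. As $n_-$ counts the zeroes of $\om$ in $\BD$ and $m_-+m_0$ the poles of $\om$ in $\ov{\BD}$ (multiplicities included), this equals $\max\{0,\#\{\text{zeroes in }\BD\}-\#\{\text{poles in }\ov{\BD}\}\}$, and $T_\om^*$ is injective precisely when this number is $0$, i.e. when the poles in $\ov{\BD}$ are at least as numerous as the zeroes in $\BD$.
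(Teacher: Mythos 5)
Your proposal leaves a genuine gap at exactly the point you yourself flag as ``the main obstacle'': the inclusion $\Dom(T_{\om_{\BT}}^*)\subset (q_0)^\sharp H^{p'}$. This inclusion is the heart of the theorem (the reverse inclusion and the identification of the action are the easy half, and can be done essentially as you indicate --- compare Lemma~\ref{L:incl1}, which does it by a direct pairing computation without residues), and the proposal does not prove it: it only describes a plan, and the plan has a logical flaw as stated. Your residue identity compares $\inn{T_{\om_\BT}g}{h}$ with $\inn{g}{T_{(\om_\BT)^*}h}$ for $h\in\Dom(T_{(\om_\BT)^*})$, and you want to conclude, when $(q_0)^\sharp\nmid h$ and some residue survives, that $h\notin\Dom(T_{\om_\BT}^*)$. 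But a nonzero difference only shows that \emph{if} $h$ were in $\Dom(T_{\om_\BT}^*)$, then $T_{\om_\BT}^*h$ would differ from $T_{(\om_\BT)^*}h$; it does not by itself make the functional $g\mapsto\inn{T_{\om_\BT}g}{h}$ unbounded. To close the argument you must either know beforehand that $T_{\om_\BT}^*\subset T_{(\om_\BT)^*}$ --- which is precisely the nontrivial content of the paper's Lemma~\ref{L:incl2}, proved there via the polynomial pairing identities \eqref{r1r} and \eqref{polyincl}, and nowhere established in your argument --- or else carry out the direct unboundedness construction you allude to, which is real analytic work (Fej\'er-type test functions handle $q_0(z)=z-1$, but general $q_0$, higher multiplicities and general $p$ require care) and is not supplied. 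The paper avoids both difficulties: after Lemma~\ref{L:incl2} it uses a finite Toeplitz-matrix argument to upgrade \eqref{polyincl} to the exact identity $q^\sharp k=z^{m-n}s^\sharp g$, and then the coprimality Lemma~\ref{L:incl3} yields $g\in q^\sharp H^{p'}$. A further technical soft spot: $\rho$ and $\tilde\rho$ have poles on $\BT$, so the integrals $\int_\BT\ov{\tilde\rho}\,g$ and $\int_\BT\ov{h}\,\rho$ need not converge separately --- only their combination does --- so even the ``finite sum of residues'' requires justification.

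The rest of your architecture is sound and in places nicer than the paper's. The additive splitting $\om=\om_\infty+\om_\BT$ with $T_{\om_\infty}$ bounded correctly reduces \eqref{Tom*} for general $\om\in\Rat$ to a symbol in $\Rat_0(\BT)$, whereas the paper uses the multiplicative factorization $T_\om=T_{\om_-}T_{z^\kappa\om_0}T_{\om_+}$ together with the Schechter and van Casteren--Goldberg adjoint-of-product theorems. Granting \eqref{Tom*}, your range computation is correct and clean: $T_\om^*$ acts on $(q_0)^\sharp H^{p'}$ as the bounded Toeplitz operator $T_\psi$, $T_\psi=T_{\psi_1}T_\phi$, and $\phi H^{p'}=(s_0)^\sharp H^{p'}$; moreover your formula without the projection does agree with \eqref{Tom*RanKer}, because $T_{\psi_1}$ annihilates $\cP_{k-1}$ for $k=n_0+n_--m_0-m_-$ (a Fourier-degree count: $1/(q_+)^\sharp$ has frequencies at most $-m_+$ on $\BT$, while $z^{m-n}(s_+)^\sharp r$ has frequencies at most $m_+-1$ for $\deg(r)<k$), so $T_{\psi_1}=T_{\psi_1}Q_k$. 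The kernel step, however, rests on an unavailable ingredient: you cite ``the explicit range of $T_\om$'' for general $\om\in\Rat$, but the formula recalled in Section~\ref{S:Tom*} (and available in the cited literature) covers only $\om\in\Rat(\BT)$; given \eqref{Tom*} you could instead obtain $\kernel(T_\om^*)=(q_0)^\sharp\,\kernel(T_\psi)$ and compute $\kernel(T_\psi)$ by classical Wiener--Hopf arguments, but as written this step, too, is incomplete.
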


Before giving a proof of Theorem \ref{T:main1} in Section \ref{S:general}, we prove the specialization of this result for the case $\om\in\Rat(\BT)$ in Section \ref{S:adjoint1}. For this purpose we first provide a description of $T_{\om^*}$ in Section \ref{S:Tom*}.

The injectivity result, but not the description of $\kernel(T_\om^*)$, can also be derived from general theory and results on $T_\om$. Indeed, according to Theorem II.3.7 in \cite{Goldberg}, $T_\om^*$ is injective if and only if $T_\om$ has dense range, so that the claim follows from Proposition 2.4 in \cite{GtHJR2}. More can be obtained in this way, since $H^p$, $1<p<\infty$, is reflexive. By Theorem II.2.14 of \cite{Goldberg} it follows that $T_\om^{**}=T_\om$, with the usual identifications of the dual spaces. Hence, applying the above to $T_\om^*$ we find that $T_\om^*$ has dense range if and only if $T_\om$ is injective; see also Theorem II.4.10 in \cite{Goldberg}. By Banach's Closed Range Theorem, cf., \cite{Y80}, $T_\om^*$ has closed range if and only if $T_\om$ has closed range. Again applying results from \cite{GtHJR2} now gives the following result.

\begin{corollary}\label{C:close/dense}
Let $\om\in\Rat$ and $1<p<\infty$. Then $T_\om^*$ has closed range if and only if $\om$ has no zeroes on $\BT$, or equivalently, $\om^*$ has no zeroes on $\BT$. Moreover, $T_\om^*$ has dense range if and only if
\[
\# \left\{\begin{array}{l}\!\!\!
 \textrm{poles of } \om \textrm{ inside }\overline{\BD} \!\!\! \\
\!\!\!\textrm{multi.\ taken into account}\!\!\!
\end{array}\right\}\leq
\# \left\{\begin{array}{l}\!\!\!
 \textrm{zeroes of } \om \textrm{ inside }\overline{\BD} \!\!\! \\
\!\!\!\textrm{multi.\ taken into account}\!\!\!
\end{array}\right\}.
\]
\end{corollary}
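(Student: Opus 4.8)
The plan is to prove Corollary \ref{C:close/dense} by leveraging the duality machinery already set up in the paragraph preceding it, rather than by directly manipulating the explicit formulas of Theorem \ref{T:main1}. The key input is that $H^p$ is reflexive for $1<p<\infty$, so $T_\om^{**}=T_\om$ under the usual identifications, together with Banach's Closed Range Theorem and the characterizations of denseness of range via injectivity of the adjoint.

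For the closed range assertion, I would invoke Banach's Closed Range Theorem (cf.\ \cite{Y80}) in the form stating that a densely defined closed operator has closed range if and only if its adjoint has closed range. Applying this to the pair $T_\om$ and $T_\om^*$, the statement ``$T_\om^*$ has closed range'' is equivalent to ``$T_\om$ has closed range.'' The remaining task is then to cite the characterization of when $T_\om$ itself has closed range, which was established in \cite{GtHJR2}: $T_\om$ has closed range precisely when $\om$ has no zeroes on $\BT$. The equivalence with ``$\om^*$ has no zeroes on $\BT$'' is immediate, since $\om^*(z)=\ov{\om(z)}$ on $\BT$, so the zero sets of $\om$ and $\om^*$ on $\BT$ coincide.

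For the dense range assertion, I would use reflexivity to write $T_\om^{**}=T_\om$ (Theorem II.2.14 in \cite{Goldberg}), and then apply the injectivity-versus-dense-range duality (Theorem II.3.7 in \cite{Goldberg}) to the operator $T_\om^*$ in place of $T_\om$: the adjoint of $T_\om^*$ is $T_\om$, so $T_\om^*$ has dense range if and only if $T_\om^{**}=T_\om$ is injective. Thus the dense range of $T_\om^*$ reduces to injectivity of $T_\om$, which by the injectivity criterion in Theorem \ref{T:main1} applied to $T_\om$ itself (or directly by the kernel formula from \cite{GtHJR2}) holds exactly when the number of poles of $\om$ inside $\ov{\BD}$ is at most the number of zeroes of $\om$ inside $\BD$. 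Here one must be careful to track whether zeroes and poles \emph{on} $\BT$ versus strictly inside $\BD$ enter the count; the corollary phrases both comparisons with ``inside $\ov{\BD}$,'' so I would verify that the kernel formula for $T_\om$ yields precisely the stated inequality once multiplicities on $\BT$ are folded in correctly.

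The main obstacle I anticipate is purely bookkeeping rather than conceptual: making sure the counting of zeroes and poles relative to $\BD$, $\ov{\BD}$, and $\BT$ is consistent between the injectivity criterion for $T_\om$ and the desired inequality for dense range of $T_\om^*$. Since $T_\om$ and $T_\om^*$ interchange the roles of zeroes and poles (and of ``inside'' versus ``outside'' via the $\sharp$-operation), one must confirm that applying the $\kernel(T_\om^*)$ dimension formula from Theorem \ref{T:main1} to $T_\om$ in place of $T_\om^*$ produces the inequality with poles inside $\ov{\BD}$ bounded by zeroes inside $\ov{\BD}$, matching the corollary's display exactly. All of the functional-analytic steps are direct citations, so once the pole/zero counting is reconciled the proof is complete.
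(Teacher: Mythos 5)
Your proposal follows essentially the same route as the paper: the paper's own justification (given in the paragraph preceding the corollary) is exactly the combination of reflexivity ($T_\om^{**}=T_\om$ via Theorem II.2.14 of \cite{Goldberg}), the injectivity--dense-range duality (Theorem II.3.7 of \cite{Goldberg}) applied to $T_\om^*$, Banach's Closed Range Theorem for the closed-range equivalence, and the citations to \cite{GtHJR2} for when $T_\om$ has closed range and when it is injective. The bookkeeping point you flag resolves in favour of the corollary's display: the injectivity criterion for $T_\om$ counts both poles and zeroes in the closed disc $\overline{\BD}$ (zeroes on $\BT$ are included), so no discrepancy arises.
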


Beyond Section \ref{S:general}, and in the remainder of this introduction, we only consider the case $p=2$ and $\om\in\Rat(\BT)$. By comparing the results on $T_\om$ and $T_\om^*$ it is obvious $T_\om$ cannot be selfadjoint, except when $\om$ has no poles on $\BT$. In Section \ref{S:symmetric} we describe in terms of $\om$ when $T_\om^*$ is symmetric, in which case $T_\om^*\subset T_\om$, and whenever $T_\om^*$ is symmetric we describe when $T_{\om^*}$ admits a selfadjoint extension. The following theorem collects some of the main results of Section \ref{S:symmetric}; it follows directly from Theorem \ref{T:symm}, Corollaries \ref{C:degcons} and \ref{C:notR}, Propositions \ref{P:selfadjext} and  \ref{P:selfadjext3}.

\begin{theorem}\label{T:main2}
Let $\om=s/q\in\Rat(\BT)$ with $s,q\in\cP$ co-prime. Consider $T_\om$ on $H^2$. Then
\[
\mbox{$T_\om^*$ is symmetric}\quad \Longleftrightarrow \quad \om(\BT)\subset \BR.
\]
In particular, if $T_\om^*$ is symmetric, then $\deg(s)\leq \deg(q)\leq 2\deg(s)$. Furthermore, if $T_\om^*$ is symmetric, then $T_\om^*$ admits a selfadjoint extension if and only if the number of roots of $s-iq$ and $s+iq$ in $\BD$, counting multiplicities, coincide. This happens in particular if $\om(\BT)\neq \BR$, but cannot happen in case $\deg(q)$ is odd.
\end{theorem}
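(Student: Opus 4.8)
The plan is to prove the several assertions in Theorem~\ref{T:main2} in sequence, since each builds naturally on the explicit formulas from Theorem~\ref{T:main1} specialized to $\om\in\Rat(\BT)$ and $p=p'=2$. When $\om\in\Rat(\BT)$ we have $s_\pm=q_\pm=1$ (no roots off $\BT$), so $s=s_0$ and $q=q_0$, and the domain description collapses to $\Dom(T_\om^*)=q^\sharp H^2$. For the symmetry characterization I would first recall that a densely defined operator is symmetric precisely when it is contained in its own adjoint, i.e.\ $T_\om^*\subset (T_\om^*)^*=T_\om$ (using reflexivity of $H^2$ and $T_\om^{**}=T_\om$ as noted after Corollary~\ref{C:close/dense}). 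The heart of the matter is thus to show that $T_\om^*\subset T_\om$ is equivalent to $\om(\BT)\subset\BR$. Since $T_\om^*=T_{\om^*}|_{q^\sharp H^2}$ and $\om^*=\ov{\om}$ on $\BT$, the inclusion $T_\om^*\subset T_\om$ should force $\om$ and $\om^*$ to agree as symbols in the relevant sense, i.e.\ $\om=\om^*$ on $\BT$, which is exactly the condition $\om(\BT)\subset\BR$. I expect this equivalence to be the main obstacle, because one must verify the inclusion of operators not just formally on symbols but on the actual domains, checking that $q^\sharp H^2\subset\Dom(T_\om)$ and that the two Toeplitz actions coincide there; the condition $\om(\BT)\subset\BR$ is what makes $q^\sharp$ proportional to $q$ (up to a unimodular constant) and $s^\sharp$ proportional to $s$, aligning the symbols.

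Granting the symmetry characterization, the degree inequality $\deg(s)\le\deg(q)\le 2\deg(s)$ follows from a counting argument on $\om(\BT)\subset\BR$. Writing $m=\deg(q)$ and $n=\deg(s)$, the real-valuedness of $\om$ on $\BT$ is equivalent to $s\ov q=\ov s q$ on $\BT$, i.e.\ $s q^\sharp=\epsilon\, s^\sharp q$ for some unimodular constant $\epsilon$ (matching leading coefficients). Comparing degrees on both sides, $n+m=n+m$ is automatic, but the real structure forces the roots of $q$ on $\BT$ and the self-conjugate symmetry of $s$ to interlock; I would extract the two inequalities by observing that $\om$ real on $\BT$ means $\om=\om^\sharp$-type symmetry, so $s/q=s^\sharp/q^\sharp$ up to constant, giving $s q^\sharp=s^\sharp q$. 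Since $q$ has all roots on $\BT$, $q^\sharp=cq$ for a unimodular $c$, so the relation reduces to $s=c^{-1}\epsilon\, s^\sharp$, forcing $s$ to be (a constant times) self-inversive of degree exactly $n$; the bound $n\le m\le 2n$ then comes from the requirement that $\om=s/q$ map $\BT$ into $\BR$ and remain a genuine element of $\Rat(\BT)$ with the prescribed pole structure, which I would read off from Corollaries~\ref{C:degcons} and~\ref{C:notR} as cited.

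For the selfadjoint-extension criterion I would invoke the classical von~Neumann theory: a closed symmetric operator admits a selfadjoint extension if and only if its deficiency indices $n_\pm=\dim\kernel(T_\om^*\mp iI)^*$ are equal, equivalently $\dim\kernel((T_\om)^*\mp iI)$ coincide. Here the key computational input is that $(T_\om^*\mp iI)$ is, up to the symmetry already established, governed by $T_\om\mp iI=T_{\om\mp i}$, whose symbol is $(s\mp iq)/q$. The deficiency spaces are then kernels of the adjoints $T_{(s\mp iq)/q}^*$, and their dimensions are computed directly from the kernel-dimension formula in Theorem~\ref{T:main1}: $\dim\kernel(T_\mu^*)=\max\{0,\#\{\text{zeroes of }\mu\text{ in }\BD\}-\#\{\text{poles of }\mu\text{ in }\ov\BD\}\}$. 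Since $\om\mp i$ and $\om\pm i$ share the denominator $q$ (same poles), equality of the two deficiency indices reduces to equality of the number of zeroes of $s-iq$ and of $s+iq$ inside $\BD$, counting multiplicities — precisely the stated criterion. The final two claims then follow: if $\om(\BT)\neq\BR$, then $\om$ omits a real value, so $s-iq$ and $s+iq$ have no common structure forcing an imbalance and one checks the zero-counts agree; and if $\deg(q)$ is odd, a parity argument on the self-inversive polynomial $s$ (whose degree $n$ must satisfy $m\le 2n$ with $m$ odd) shows the two counts must differ by an odd number and hence can never coincide. I expect the parity obstruction for odd $\deg(q)$ to require the most care, as it hinges on the precise self-inversive symmetry of $s$ forced by $\om(\BT)\subset\BR$ and how reflection across $\BT$ pairs the roots of $s\pm iq$.
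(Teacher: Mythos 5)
The overall architecture of your proposal matches the paper's: symmetry is reduced to the inclusion $T_\om^*\subset (T_\om^*)^*=T_\om$ and the symbol identity $\om=\om^*$ (which the paper nails down simply by applying both operators to the single function $q\in qH^2=q^\sharp H^2$), the degree bound comes from the forced relation $s=\ga z^{m-n}s^\sharp$, and the extension question is treated by von Neumann deficiency indices. However, your computation of the deficiency indices contains a genuine error that breaks the third part. For the closed symmetric operator $A=T_\om^*$ the deficiency subspaces are $\kernel(A^*\mp i)=\kernel(T_\om\mp i)=\kernel(T_{\om\mp i})$, i.e.\ kernels of the \emph{original} Toeplitz-like operators with symbols $(s\mp iq)/q$, not of their adjoints. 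You instead take $\kernel\bigl(T_{(s\mp iq)/q}^*\bigr)$ and apply the adjoint-kernel formula $\max\{0,\#\{\textrm{zeroes in }\BD\}-\#\{\textrm{poles in }\ov{\BD}\}\}$. Since symmetry forces $\om$ to be proper, $\deg(s\mp iq)\le\deg(q)=\#\{\textrm{poles in }\ov{\BD}\}$, so this formula returns $0$ for both indices; your argument would therefore conclude that every symmetric $T_\om^*$ admits a selfadjoint extension, which is contradicted by $\om(z)=-i(z+1)/(z-1)$ (Example \ref{E:symmetric1}). (Indeed $\kernel\bigl((T_{\om\mp i})^*\bigr)=\kernel(T_\om^*\pm i)$ is trivial for any symmetric operator, so these are the wrong spaces; note also that the root-count criterion you state does not follow from the formula you wrote down.) The correct count, via the kernel formula for $T_\mu$ itself (Corollary 4.2 of \cite{GtHJR1}, or \eqref{DomRanId}), is $n_\pm=\deg(q)-\#\{\textrm{roots of }s\mp iq\textrm{ in }\BD\}$, using that $s\mp iq$ has no roots on $\BT$ because $\om(\BT)\subset\BR$; only then does $n_+=n_-$ reduce to the stated condition.

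The two final claims are also only gestured at. For ``$\om(\BT)\neq\BR$ implies an extension exists'' the paper uses that the Fredholm index of $T_{\om-\la}$ is constant on the connected components of $\BC\setminus\om(\BT)$, so $i$ and $-i$ lie in the same component and the indices (hence the root counts) agree; ``no common structure forcing an imbalance'' is not a substitute for this argument. For the odd-degree obstruction the needed input is the reflection identity $s\pm iq=\ga z^{l_\mp}(s\mp iq)^\sharp$ (Lemma \ref{L:ids}), which pairs the roots of $s+iq$ inside $\BD$ with those of $s-iq$ outside $\BD$ and yields $\deg(q)=2\,\#\{\textrm{roots of }s+iq\textrm{ in }\BD\}$ whenever the deficiency indices coincide; your sketch points in the right direction but this identity has to be established. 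A smaller slip: from $\om=\ov{\om}$ on $\BT$ one gets $sq^\sharp=z^{m-n}s^\sharp q$, not $sq^\sharp=\epsilon s^\sharp q$; dropping the $z^{m-n}$ factor would force $s(0)\neq0$ and lose exactly the case $\deg(s)<\deg(q)$ that makes the degree inequality nontrivial.
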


Several other conditions for $T_\om^*$ to be symmetric and/or have a selfadjoint extension are derived in Section \ref{S:symmetric}.

In \cite{S08} Sarason introduced and studied an unbounded Toeplitz-like operator with symbol in the Smirnov class. In Section \ref{S:Sarason} we show that if $\om\in\Rat(\BT)$ is proper, then the adjoint operator $T_\om^*$ is precisely a Toeplitz-like operator of the type studied by Sarason. Hence in this case our Toeplitz-like operator $T_\om=T_\om^{**}$ coincides with the adjoint of the Toeplitz-like operator considered in \cite{S08}. Based on ideas in \cite{S08}, we also show that $H(\overline{\BD})$, the space of functions analytic on a neighborhood of $\overline{\BD}$, is contained in $\Dom(T_\om)$ and in fact is a core of $T_\om$.

In the last section of \cite{S08}, Sarason introduces a class of closed, densely defined Toeplitz-like operators on $H^2$ determined by algebraic properties, which was further investigated by Rosenfeld in \cite{R13,R16}. In particular, this class of Toeplitz-like operators contains the unbounded Toeplitz-like operator studied by Sarason and is closed under taking adjoints, and hence contains our Toeplitz-like operators with proper symbols in $\Rat(\BT)$. In fact, we will show in Section \ref{S:Sarason} that $T_\om$ is contained in the class of Toeplitz-like operators for any $\om$ in $\Rat$.

\section{The operator $T_{\om^*}$ for $\om\in \Rat(\BT)$}\label{S:Tom*}

In this section we recall some results from \cite{GtHJR1,GtHJR2} on the operator $T_\om$ for $\om\in \Rat(\BT)$ that we will use in the sequel, and apply them to the operator $T_{\om^*}$. Hence, throughout this section let $\om=s/q\in\Rat(\BT)$, with $s,q\in\cP$ co-prime. We set $m=\deg(q)$ and $n=\degr(s)$. Furthermore, factor $s=s_-s_0s_+$ with $s_-$, $s_0$ and $s_+$ polynomials having roots only inside, on, or outside $\BT$, respectively. We then recall from Theorem 2.2 in \cite{GtHJR2} that
\begin{equation}\label{DomRanId}
\begin{aligned}
&\qquad \kernel (T_\omega) = \left\{r/s_+ \mid \deg(r) < m - \deg(s_-s_0) \right\};\\
&\Dom(T_\om)=qH^p+\cP_{m-1}; \quad
\Ran(T_\om)=s H^p+\wtil\cP,
\end{aligned}
\end{equation}
where $\wtil\cP$ is the subspace of $\cP$ given by
\begin{equation}\label{tilP}
\wtil\cP = \{ r\in\cP \mid r q = r_1 s + r_2 \mbox{ for } r_1,r_2\in\mathcal{P}_{m-1}\}\subset \cP_{n-1}.
\end{equation}
Furthermore, $H^p=\overline{\Ran(T_\om)} + \wtil{\cQ}$ forms a direct sum decomposition of $H^p$, where
\begin{equation}\label{tilQ}
\wtil\cQ=\cP_{k-1}\quad \mbox{with}\quad k=\max\{\deg(s_-)-m , 0\},
\end{equation}
following the convention $\cP_{-1}:=\{0\}$. Furthermore, the action of $T_\om$ is as follows.
\begin{align*}
& T_{\om}g =sh +\wtil{r} \quad (\mbox{$g= qh+r\in qH^p+\cP_{m-1}=\Dom(T_\om)$}),\\
& \mbox{where $\wtil{r}\in\cP_{n-1}$ is such that $rs=\wtil{r}q+r_2$ for some $r_2\in\cP_{m-1}$.}
\end{align*}

We also recall from Lemma 5.3 in \cite{GtHJR1} that
\begin{equation}\label{zkappaTom}
T_{z^\kappa \om}=T_{z^\kappa} T_{\om}\ \  \mbox{for any integer $\kappa\leq 0$.}
\end{equation}

Recall that $\om^*$ is defined as $\om^*(z)=\ov{\om(z)}$ on $\BT$, i.e.,
$\om^*(z)=\overline{s(z)}\slash\overline{q(z)}$.
For $z\in\BT$
\[
\overline{q(z)}= \overline{q_0+zq_1+ \cdots +z^mq_m} =
\overline{q_0}+\overline{q_1}\frac{1}{z}+\cdots +\overline{q_m}\frac{1}{z^m}=
\frac{1}{z^m}q^\sharp(z).
\]
Hence $q^\sharp(z)=z^m \ov{q(z)}$, and likewise $s^\sharp(z)= z^n \ov{s(z)}$. Thus we have
\begin{equation}\label{om*}
\om^*(z)=\frac{z^{m-n}s^\sharp(z)}{q^\sharp(z)} \mbox{ if $m\geq n$} \ands
\om^*(z)=\frac{s^\sharp(z)}{z^{n-m}q^\sharp(z)} \mbox{ if $m< n$}.
\end{equation}
In fact, the formula $\om^*(z)=z^{m-n}s^\sharp(z)/q^\sharp(z)$ holds in both cases, but is not always a representation as the ratio of two polynomials. Note in particular that $\om^*\in \Rat(\BT)$ in case $\om$ is proper, while this need not be the case if $\om$ is not proper. Thus, if $\om$ is proper, the above formulas apply directly, while for the non-proper case, using \eqref{zkappaTom} we can reduce certain questions to questions concerning the Toeplitz operator $T_{s^\sharp/ q^\sharp}$ with symbol $s^\sharp/ q^\sharp$ which is in $\Rat(\BT)$.

A polynomial $r\neq 0$ is called self-inversive in case $r=\ga r^\sharp$ for a constant $\ga\in\BC$, which necessarily is unimodular. In fact, $\ga$ is the ratio $r_0/\overline{r_n}$ with $r_0=r(0)$ and $r_n$ the leading coefficient of $r$.  By a theorem of Cohn \cite{C22}, a polynomial $r$ has all its roots on $\BT$ if and only if $r$ is self-inversive and its derivative has all its roots in the closed unit disc $\overline{\BD}$. Hence, any polynomial with roots only on $\BT$ is self-inversive. In particular,  $q= \ga q^\sharp$ and $s_0=\rho (s_0)^\sharp$ for unimodular constants $\ga$ and $\rho$.

More generally, in the transformation $r\to r^\sharp$, the nonzero roots of $r$ (including multiplicity) transfer along the unit circle via the map $\al \mapsto 1/\ov{\al}=|\al|^{-2} \al$, while the degree decreases by the multiplicity of 0 as a root of $r$. Consequently, in the factorization $s^\sharp= (s_+)^\sharp (s_0)^\sharp (s_-)^\sharp$, the polynomials $(s_+)^\sharp$, $(s_0)^\sharp$ and $(s_-)^\sharp$ contain the roots of $s^\sharp$ inside, on and outside $\BT$, respectively, taking multiplicities into account. We write $(s_+)^\sharp$ rather than $s_+^\sharp$, etc., to avoid confusion with what one may interpret as $(s^\sharp)_+$.

We now apply the above to $T_{\om^*}$ acting on $H^{p'}$, $1<p'<\infty$, to fit better with the remainder of the paper.

\begin{proposition}\label{P:Tom*}
Let $\om= s/q\in \Rat(\BT)$, with $s,q\in\cP$ co-prime, $m=\deg(q)$ and $n=\deg(s)$. Factor $s=s_-s_0s_+$ with $s_-$, $s_0$ and $s_+$ polynomials having roots only inside, on, or outside $\BT$, respectively. Then for $T_{\om^*}$ on $H^{p'}$, $1<p'<\infty$, we have
\begin{equation}\label{DomKerIds*}
\kernel (T_{\omega^*}) = \left\{r_0/(s_-)^\sharp \mid \deg(r_0) < \deg(s_-) \right\},\ \  \Dom(T_{\om^*})=q^\sharp H^{p'}+\cP_{m-1}.
\end{equation}
Moreover, we have
\begin{equation}\label{RanIds*}
\begin{aligned}
\Ran(T_{\om^*})&= z^{m-n} s^\sharp H^{p'}+\wtil\cP_*\quad\mbox{ if $m\geq n$},\\
\Ran(T_{\om^*})&=T_{z^{m-n}}( s^\sharp H^{p'}+\wtil\cP_*)\quad \mbox{ if $m< n$},
\end{aligned}
\end{equation}
where for $m\geq n$ the subspace $\wtil\cP_*$ is given by
\begin{equation}\label{tilP*1}
\wtil\cP_* = \{ r\in\cP \mid r q^\sharp = z^{m-n} r_1 s^\sharp + r_2 \mbox{ for } r_1,r_2\in\mathcal{P}_{m-1}\}\subset \cP_{m-n+\deg(s^\sharp)-1},
\end{equation}
while for $m< n$ we have
\begin{equation}\label{tilP*2}
\wtil\cP_* = \{ r\in\cP \mid r q^\sharp = r_1 s^\sharp + r_2 \mbox{ for } r_1,r_2\in\mathcal{P}_{m-1}\}\subset \cP_{\deg(s^\sharp)-1}.
\end{equation}
Furthermore, $\Ran(T_{\om^*})$ is dense in $H^{p'}$.
\end{proposition}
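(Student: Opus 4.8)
The plan is to obtain Proposition~\ref{P:Tom*} as a direct application of the formulas \eqref{DomRanId}--\eqref{tilQ} for a general $T_\eta$ with $\eta\in\Rat(\BT)$, applied to the symbol $\eta=\om^*$, together with the reduction \eqref{zkappaTom} that handles the non-proper case. First I would treat the proper case $m\geq n$. Here $\om^*=z^{m-n}s^\sharp/q^\sharp\in\Rat(\BT)$ genuinely, since $q^\sharp$ has all its roots on $\BT$ (as $q$ does, $q$ being self-inversive, so $q^\sharp=\ov\ga\,q$ has the same roots on $\BT$), and the numerator $z^{m-n}s^\sharp$ is a genuine polynomial of degree $m-n+\deg(s^\sharp)$. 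I would then simply substitute into \eqref{DomRanId}: the denominator of $\om^*$ is $q^\sharp$, which has degree $m$ and all roots on $\BT$, so the ``$q$'' of the general formula becomes $q^\sharp$ and the ``$m$'' stays $m$. This gives $\Dom(T_{\om^*})=q^\sharp H^{p'}+\cP_{m-1}$ immediately. For the range and the $\wtil\cP_*$ description, the numerator $z^{m-n}s^\sharp$ plays the role of ``$s$'', so \eqref{DomRanId} and \eqref{tilP} yield exactly \eqref{RanIds*} (first line) and \eqref{tilP*1}, with the degree bound following from $\wtil\cP\subset\cP_{n-1}$ applied with numerator-degree $m-n+\deg(s^\sharp)$.

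Next I would handle the kernel, which requires identifying the inside/on/outside factorization of the numerator of $\om^*$. Using the transfer rule $\al\mapsto 1/\ov\al$ recalled just before the proposition, the roots of $s^\sharp$ inside $\BT$ are exactly the images of the roots of $s$ outside $\BT$, so the inside-factor of $s^\sharp$ is $(s_+)^\sharp$; similarly the outside-factor is $(s_-)^\sharp$ and the on-circle factor is $(s_0)^\sharp$. The extra factor $z^{m-n}$ contributes only roots at $0$, hence only to the inside-factor. The general kernel formula in \eqref{DomRanId} is $\{r/(\text{numerator})_+ : \deg r < (\text{denominator degree})-\deg((\text{numerator})_-(\text{numerator})_0)\}$. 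Applying this with numerator $z^{m-n}s^\sharp$, denominator $q^\sharp$, I read off that the ``$s_+$'' of the formula is $z^{m-n}(s_+)^\sharp$, while $(\text{num})_-=(s_-)^\sharp$ and $(\text{num})_0=(s_0)^\sharp$. A short degree count — $m-\deg((s_-)^\sharp(s_0)^\sharp)=m-(n_-+n_0)$, and after cancelling the $z^{m-n}$ factor in the quotient $r/(z^{m-n}(s_+)^\sharp)$ the numerators that survive as elements of $H^{p'}$ are exactly multiples of the form $r_0/(s_-)^\sharp$ with $\deg(r_0)<\deg(s_-)$ — should reproduce \eqref{DomKerIds*}. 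I expect the bookkeeping here to match because $\deg(s_-)=\deg((s_+)^\sharp)$ under the transfer map.

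For the non-proper case $m<n$, the formula $\om^*=z^{m-n}s^\sharp/q^\sharp$ is not a ratio of polynomials, so I would invoke \eqref{zkappaTom}: writing $\kappa=m-n<0$, we have $T_{\om^*}=T_{z^{m-n}}T_{s^\sharp/q^\sharp}$ with $s^\sharp/q^\sharp\in\Rat(\BT)$ proper. Since $T_{z^{m-n}}$ acts as multiplication by $z^{m-n}$ followed by the Riesz projection, this yields the second line of \eqref{RanIds*} directly from $\Ran(T_{s^\sharp/q^\sharp})=s^\sharp H^{p'}+\wtil\cP_*$ with $\wtil\cP_*$ as in \eqref{tilP*2} (the latter being the specialization of \eqref{tilP} to numerator $s^\sharp$, denominator $q^\sharp$). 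Crucially, multiplication by $z^{m-n}$ does not alter the kernel or the domain — $T_{z^{m-n}}$ is injective on $H^{p'}$ — so the kernel and domain formulas \eqref{DomKerIds*} are the same in both cases; I would verify this by noting $\kernel(T_{\om^*})=\kernel(T_{s^\sharp/q^\sharp})$ and $\Dom(T_{\om^*})=\Dom(T_{s^\sharp/q^\sharp})$.

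Finally, density of $\Ran(T_{\om^*})$ in $H^{p'}$ follows from \eqref{tilQ}: the codimension of $\overline{\Ran(T_\eta)}$ equals $\dim\wtil\cQ=\max\{\deg((\text{num})_-)-(\text{denom degree}),0\}$. The inside-factor of the numerator of $\om^*$ is $z^{m-n}(s_+)^\sharp$ of degree at most $(m-n)^+ + n_+\leq m$ when $m\geq n$ (since $n_+\leq n\leq m$), and in the non-proper case multiplication by $z^{m-n}$ cannot enlarge the closed range codimension since $T_{z^{m-n}}$ has dense range; either way $\deg((\text{num})_-)\leq m=\deg(q^\sharp)$, forcing $k=0$ and hence $\wtil\cQ=\{0\}$. \textbf{The main obstacle} I anticipate is precisely this density bookkeeping in the non-proper case, where the numerator degree $m-n+\deg(s^\sharp)$ can exceed $m$ and one must argue carefully that the factor $z^{m-n}$, absorbed through \eqref{zkappaTom}, does not create a nontrivial range complement; the cleanest route is to observe that $\Ran(T_{z^{m-n}})$ is dense and the range of a composition of operators with dense range ordering appropriately is dense, or alternatively to invoke Proposition~2.4 of \cite{GtHJR2} characterizing when $T_\eta$ has dense range in terms of its zero/pole count on $\overline{\BD}$ and check that $\om^*$ satisfies it.
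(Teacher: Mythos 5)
Your overall strategy is exactly the paper's: specialize \eqref{DomRanId}--\eqref{tilQ} to the symbol $\om^*$, treating the proper case directly and reducing the case $m<n$ via \eqref{zkappaTom}; the domain, range and density parts of your argument are essentially sound. However, the kernel computation contains two genuine errors. First, in the case $m\geq n$ you swap the inside and outside factors when specializing the kernel formula. In \eqref{DomRanId} the kernel is $\{r/s_+\}$ with $s_+$ the factor of the numerator having roots \emph{outside} $\BT$, and the degree bound subtracts the degree of the product of the inside and on-circle factors. For the numerator $z^{m-n}s^\sharp$ of $\om^*$ the outside factor is $(s_-)^\sharp$ and the inside factor is $z^{m-n}(s_+)^\sharp$ (you identify these correctly at first), yet you then assign the role of ``$s_+$'' to $z^{m-n}(s_+)^\sharp$ and the role of the inside factor to $(s_-)^\sharp$. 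As written this puts $z^{m-n}(s_+)^\sharp$, all of whose roots lie in $\BD$, into the denominator of the kernel elements (so they would not even lie in $H^{p'}$) and yields the degree bound $m-\deg((s_-)^\sharp(s_0)^\sharp)$ instead of the correct $m-\deg\bigl(z^{m-n}(s_+)^\sharp(s_0)^\sharp\bigr)=n-n_+-n_0=\deg(s_-)$; the subsequent ``cancelling the $z^{m-n}$ factor'' step is not a valid repair. With the roles restored, the computation is precisely the paper's.

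Second, and more seriously, for $m<n$ you assert that $T_{z^{m-n}}$ is injective on $H^{p'}$ and conclude $\kernel(T_{\om^*})=\kernel(T_{s^\sharp/q^\sharp})$. For $m<n$ the operator $T_{z^{m-n}}=T_{z^{-(n-m)}}$ is a power of the backward shift, whose kernel is $\cP_{n-m-1}\neq\{0\}$. Consequently $\kernel(T_{\om^*})$ consists of all $g\in\Dom(T_{s^\sharp/q^\sharp})$ with $T_{s^\sharp/q^\sharp}\,g\in\cP_{n-m-1}$, which is strictly larger than $\kernel(T_{s^\sharp/q^\sharp})$: applying \eqref{DomRanId} to $s^\sharp/q^\sharp$ gives $\kernel(T_{s^\sharp/q^\sharp})=\{r/(s_-)^\sharp \mid \deg(r)<m-n_+-n_0\}$, of dimension $m-n_+-n_0<n_-=\deg(s_-)$ since $m<n$. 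The missing step is the paper's argument identifying the functions $g$ satisfying $s^\sharp g=q^\sharp\wtil{r}+r_1$ with $\wtil{r}\in\cP_{n-m-1}$ and $r_1\in\cP_{m-1}$ as exactly those of the form $r/(s_-)^\sharp$ with $\deg(r)<\deg(s_-)$; without it the stated kernel formula in the non-proper case is not established.
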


\begin{proof}[\bf Proof] We separate the cases $m\geq n$ and $m< n$.

For $m\geq n$, we have $\om^*=\wtil{s}/\wtil{q}\in\Rat(\BT)$ with $\wtil{s}=z^{m-n}s^\sharp$ and $\wtil{q}=q^\sharp$. Hence $\wtil{s}$ factors as $\wtil{s}=(z^{m-n}(s_+)^\sharp) (s_0)^\sharp (s_-)^\sharp$, where the factors have all their roots inside, on, or outside $\BT$, respectively. Also, $\deg(q^\sharp)=\deg(q)$ and $\deg((s_+)^\sharp)=\deg(s_+)$. So the formulas for $\Dom(T_{\om^*})$ and $\Ran(T_{\om^*})$ follow directly from \eqref{DomRanId}, while the formula for $\kernel(T_{\om^*})$ follows because the bound on the degree of $r_0$ can be computed as
\begin{align*}
m-\deg(z^{m-n}(s_+)^\sharp (s_0)^\sharp)    & = n- \deg((s_+)^\sharp (s_0)^\sharp)
=n-\deg(s_+s_0)=\deg(s_-).
\end{align*}
Finally, a complement of the closure of $\Ran(T_{\om^*})$ is given by $\cP_{k-1}$ with $k$ the maximum of $0$ and $\deg(z^{m-n}(s_+)^\sharp)-m=\deg ((s_+)^\sharp)-n\leq 0$. Hence $\cP_{-1}=\{0\}$. Thus $T_{\om^*}$ has dense range, as claimed.

In case $m< n$, we have $T_{\om^*}=T_{z^{m-n}}T_{s^\sharp/ q^\sharp}$ and $s^\sharp/q^\sharp$ is in $\Rat(\BT)$. Applying the above results for $T_\om$ to $T_{s^\sharp/ q^\sharp}$ directly gives the formulas for $\Dom(T_{\om^*})$ and $\Ran(T_{\om^*})$.

To see that the formula for $\kernel(T_{\om^*})$ holds, we follow the argumentation of the proof of Lemma 4.1 in \cite{GtHJR1}. For $g\in \Dom(T_{\om^*})=\Dom(T_{s^\sharp/q^\sharp})$ to be in $\kernel (T_{\om^*})$ is equivalent to $T_{s^\sharp/ q^\sharp} g \in \cP_{n-m-1}$.
In other words, by Lemma 3.2 in \cite{GtHJR1}, to $s^\sharp g= q^\sharp \wtil{r} +r_1$ with $r_1\in \cP_{m-1}$ and $\wtil{r}\in \cP_{n-m-1}$, since then $T_{s^\sharp/ q^\sharp}g= \wtil{r}$. The latter happens precisely when $g=r/(s_-)^\sharp$ with $r\in\cP_{\deg(s_-)-1}$. Indeed, in that case $\deg((s_+)^\sharp(s_0)^\sharp r)<n$ which in the equation  $(s_+)^\sharp(s_0)^\sharp r=s^\sharp g= q^\sharp \wtil{r} +r_1$ corresponds to $\deg(\wtil{r})< m-1$, as required. Finally, we note that a complement of $\ov{\Ran(T_{s^\sharp/q^\sharp})}$ in $H^{p'}$ is given by $\cP_{k-1}$ with $k=\max\{0, \deg{s_+}^\sharp-m\}\leq n-m$. Let $f\in H^{p'}$ and write $z^{n-m}f= h+r\in \ov{\Ran(T_{s^\sharp/q^\sharp})}+ \cP_{k-1}$. Then $f=T_{z^{m-n}} z^{n-m}f=T_{z^{m-n}} (h+r)=T_{z^{m-n}} h\in T_{z^{m-n}} \ov{\Ran(T_{s^\sharp/q^\sharp})}\subset \ov{\Ran( T_{z^{m-n}}T_{s^\sharp/q^\sharp})}=\ov{\Ran(T_{\om^*})}$.  Thus also in this case $\Ran(T_{\om^*})$ is dense in $H^{p'}$.
\end{proof}

We conclude this section with a lemma will be of use in the sequel.

\begin{lemma}\label{L:sumpound}
Let $r_1,r_2\in\cP$. Set $n_i=\deg(r_i)$, for $i=1,2$, and $n=\deg(r_1+r_2)$. Then
\[
(r_1+r_2)^\sharp =z^{n-n_1} r_1^\sharp + z^{n-n_2} r_2^\sharp.
\]
In case $n<\max\{n_1,n_2\}$, then $n_1=n_2$ and $0$ is a root of $r_1^\sharp+r_2^\sharp$ with multiplicity $n-n_1$, so that the left hand side in the above identity still is a polynomial without a root at $0$.
\end{lemma}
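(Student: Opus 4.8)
The plan is to prove Lemma \ref{L:sumpound} by a direct computation from the definition of the $\sharp$-operation, treating the generic case $n=\max\{n_1,n_2\}$ first and then the degenerate case $n<\max\{n_1,n_2\}$ separately, since the latter is where cancellation occurs in the leading coefficients.

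First I would unpack the definition. Writing $r_i(z)=\sum_{j=0}^{n_i} (r_i)_j z^j$, the operation $r^\sharp(z)=z^{\deg(r)}\ov{r(1/\ov z)}$ acts on a polynomial of degree $k$ by reversing and conjugating its coefficient list; equivalently, for \emph{any} exponent $N\ge\deg(r)$ the expression $z^N\ov{r(1/\ov z)}$ equals $z^{N-\deg(r)} r^\sharp(z)$. This is the key reformulation: the $\sharp$ of a polynomial depends on its actual degree, but the formula $z^N\ov{r(1/\ov z)}$ is linear and robust against the choice of $N$ as long as $N$ is large enough. I would apply this with $N=n=\deg(r_1+r_2)$ to the sum. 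Since $r_1+r_2$ has degree $n$, we have by definition
\[
(r_1+r_2)^\sharp(z)=z^{n}\,\ov{(r_1+r_2)(1/\ov z)}=z^{n}\,\ov{r_1(1/\ov z)}+z^{n}\,\ov{r_2(1/\ov z)}.
\]
Then for each $i$, provided $n\ge n_i$, I rewrite $z^{n}\ov{r_i(1/\ov z)}=z^{n-n_i}\bigl(z^{n_i}\ov{r_i(1/\ov z)}\bigr)=z^{n-n_i} r_i^\sharp(z)$, which yields the claimed identity $(r_1+r_2)^\sharp=z^{n-n_1}r_1^\sharp+z^{n-n_2}r_2^\sharp$ exactly when $n\ge n_1$ and $n\ge n_2$, i.e. when $n=\max\{n_1,n_2\}$ (noting $n\le\max\{n_1,n_2\}$ always holds).

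The main obstacle, and the only delicate point, is the degenerate case $n<\max\{n_1,n_2\}$, where the leading terms of $r_1$ and $r_2$ cancel. Here cancellation of top coefficients forces $n_1=n_2=:N$ and $(r_1)_N=-(r_2)_N$. I would argue that $r_1^\sharp+r_2^\sharp$ is a genuine polynomial of degree $N$ whose $j$-th coefficient is $\ov{(r_1)_{N-j}}+\ov{(r_2)_{N-j}}=\ov{(r_1+r_2)_{N-j}}$; since $r_1+r_2$ vanishes in its top $N-n$ coefficients, the polynomial $r_1^\sharp+r_2^\sharp$ vanishes in its bottom $N-n$ coefficients, i.e. $0$ is a root of multiplicity exactly $N-n=n-n_1$ (using $n_1=N$). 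Factoring out $z^{N-n}$ gives $r_1^\sharp+r_2^\sharp=z^{N-n}\bigl(\,\cdots\bigr)$ with the bracketed factor a polynomial not vanishing at $0$, and a short check identifies this bracketed factor with $(r_1+r_2)^\sharp$, so the stated identity $(r_1+r_2)^\sharp=z^{n-n_1}r_1^\sharp+z^{n-n_2}r_2^\sharp$ persists (with both shift exponents equal to $n-N<0$ absorbed correctly). The bookkeeping of degrees and the sign-cancellation of leading coefficients is the part requiring care; everything else is a direct substitution into the definition.
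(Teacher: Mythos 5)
Your proof is correct and follows essentially the same route as the paper: a direct substitution into the definition $r^\sharp(z)=z^{\deg(r)}\overline{r(1/\overline{z})}$ on $\BT$, splitting off the factors $z^{n-n_i}$. The paper in fact gives only the one-line formal computation and does not separately verify the degenerate case $n<\max\{n_1,n_2\}$, so your coefficient-level analysis there (showing $n_1=n_2$ and that $0$ is a root of $r_1^\sharp+r_2^\sharp$ of multiplicity $n_1-n$, which absorbs the negative shift) is a welcome extra level of care rather than a deviation.
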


\begin{proof}[\bf Proof]
By definition, for $z\in\BT$ we have
\begin{align*}
  (r_1+r_2)^\sharp (z) &
  = z^{n} (\overline{r_1(1/\overline{z})} + \overline{r_2(1/\overline{z})})
=z^{n-n_1} z^{n_1}\overline{r_1(1/\overline{z})} + z^{n-n_2} z^{n_2} \overline{r_2(1/\overline{z})}\\
 & =z^{n-n_1} r_1^\sharp(z) + z^{n-n_2} r_2^\sharp(z).\qedhere
\end{align*}
\end{proof}

\section{The adjoint of $T_\om$ for $\om\in\Rat(\BT)$}\label{S:adjoint1}

In this section we prove the first main result, Theorem \ref{T:main1}, for the special case that $\om\in\Rat(\BT)$. In this case, the result specializes to the following theorem, which we prove in this section.

\begin{theorem}\label{T:main1'}
Let $\om = s/q\in\Rat(\BT)$ with $s,q\in\cP$ co-prime and $1<p<\infty$. Set $m=\degr(q)$ and $n=\degr(s)$ and let $1<p'<\infty$ with $1/p + 1/p'=1$. Then
\begin{equation}\label{Tom*'}
\Dom (T_\om^*) = q^\sharp H^{p'}\subset \Dom(T_{\om^*}) \ands
T_\om^* = T_{\om^*}|_{q^\sharp H^{p'}}.
\end{equation}
In fact, for $g=q^\sharp v\in q^\sharp H^{p'}$ we have $T_\om^*g =T_{z^{m-n}} s^\sharp v$. Moreover, factorize $s = s_- s_0 s_+$ with $s_-$, $s_0$ and $s_+$ polynomials having roots only inside, on, or outside $\BT$, respectively. Then
\begin{equation}\label{Tom*RanKer'}
\Ran (T_\om^*) = T_{z^{m-n}} s^\sharp H^{p'}
\mbox{ and }
\kernel (T_\om^*) = \left\{ \frac{q^\sharp r}{(s_-)^\sharp} \mid \deg(r) < \degr(s_-) - m \right \}.
\end{equation}
In particular, we have
\[
\dim \kernel (T_\om^*) = \max\left\{0, \#\left\{
\textrm{zeroes of } \om^* \textrm{ outside } \BT
\right\} -
\#\left\{
\textrm{poles of } \om^* \textrm{ on } \BT
\right\}
\right\},
\]
where the multiplicities of the zeroes and poles are taken into account.
Thus $T_{\om}^*$ is injective if and only if $\om$ has at least as many poles inside $\BT$ as zeroes inside $\BT$ unequal to $0$, multiplicities taken into account.
\end{theorem}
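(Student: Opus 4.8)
The plan is to realize $T_\om^*$ as the restriction $A:=T_{\om^*}|_{q^\sharp H^{p'}}$ and to prove $A=T_\om^*$ through the adjoint-swap $T_\om^*=A^{**}$, after which \eqref{Tom*RanKer'} and the dimension count drop out. First I would record from Proposition~\ref{P:Tom*} that $q^\sharp H^{p'}\subseteq\Dom(T_{\om^*})$ and that, for $g=q^\sharp v$ with $v\in H^{p'}$, the identity \eqref{om*} gives $\om^* g=z^{m-n}s^\sharp v$, whence $Ag=T_{\om^*}g=\BP(z^{m-n}s^\sharp v)=T_{z^{m-n}}s^\sharp v$ (here $s^\sharp v\in H^{p'}$, so this is well defined). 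Since $q\in\Rat(\BT)$, the polynomial $q^\sharp$ has all its roots on $\BT$, hence is outer, so $q^\sharp H^{p'}$ is dense in $H^{p'}$ and $A$ is densely defined; thus $A^*$ and $A^{**}$ make sense, and, $H^{p'}$ being reflexive, $A^{**}=A$ once $A$ is shown to be closed.

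The conceptual difficulty is that $q^\sharp H^{p'}$ is dense but \emph{not} closed in $H^{p'}$ (multiplication by $q^\sharp$ is injective but not bounded below, its roots lying on $\BT$), so the asserted identity $\Dom(T_\om^*)=q^\sharp H^{p'}$ says that $T_\om^*$ is closed with a non-closed domain, and controlling this is the point. I would resolve it by showing directly that $A$ is closed. Suppose $q^\sharp v_k\to g$ and $T_{z^{m-n}}s^\sharp v_k\to w$ in $H^{p'}$. As $s,q$ are co-prime and $s^\sharp(0)=\ov{s_n}$, $q^\sharp(0)=\ov{q_m}$ are nonzero, the polynomials $s^\sharp,q^\sharp$ are co-prime, so a B\'ezout identity $a s^\sharp+b q^\sharp=1$ holds. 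Writing $v_k=a(s^\sharp v_k)+b(q^\sharp v_k)$, and using that the Toeplitz matrix of multiplication by $q^\sharp$ is lower triangular with nonzero diagonal $q^\sharp(0)$ (so the finitely many low-order Taylor coefficients of $v_k$ are recovered from those of $q^\sharp v_k$), one sees that $\{v_k\}$ is bounded; passing to a weakly convergent subsequence $v_k\rightharpoonup v$ and using boundedness of multiplication by $q^\sharp$ and of $v\mapsto T_{z^{m-n}}s^\sharp v$ gives $g=q^\sharp v$ and $w=T_{z^{m-n}}s^\sharp v=Ag$. Hence $A$ is closed.

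The main computational step is to identify $A^*$, and I expect this to be the crux. For $f\in H^p$ and $g=q^\sharp v\in\Dom(A)$ one computes, using that $\BP$ pairs away the co-analytic part and the boundary identities $\ov{s^\sharp}=z^{-n}s$, $\ov{q^\sharp}=z^{-m}q$ on $\BT$,
\[
\inn{f}{Ag}=\inn{f}{T_{z^{m-n}}s^\sharp v}=\inn{T_{z^{-m}}(sf)}{v}.
\]
Thus $f\in\Dom(A^*)$, i.e.\ $v\mapsto\inn{f}{Ag}$ is bounded in $\|q^\sharp v\|$, exactly when there is $\phi\in H^p$ with $\inn{T_{z^{-m}}(sf)}{v}=\inn{\phi}{q^\sharp v}=\inn{T_{z^{-m}}(q\phi)}{v}$ for all $v$ (extending the functional from the dense subspace $q^\sharp H^{p'}$); since $T_{z^{-m}}$ kills precisely $\cP_{m-1}$, this means $sf=q\phi+\eta$ with $\phi\in H^p$, $\eta\in\cP_{m-1}$. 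By the very definition \eqref{Tom} this is the condition $f\in\Dom(T_\om)$, with $A^*f=\phi=T_\om f$. Hence $A^*=T_\om$, so $A^{**}=T_\om^*$, and closedness gives $A=A^{**}=T_\om^*$, which is \eqref{Tom*'}.

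With \eqref{Tom*'} in hand the remaining assertions are direct. As $v$ runs over $H^{p'}$ we get $\Ran(T_\om^*)=\{T_{z^{m-n}}s^\sharp v\}=T_{z^{m-n}}s^\sharp H^{p'}$. For the kernel I solve $\BP(z^{m-n}s^\sharp v)=0$ with $v\in H^{p'}$: when $m\ge n$ this forces $v=0$, while for $m<n$ it forces $s^\sharp v\in\cP_{n-m-1}$, so $v=p_2/s^\sharp\in H^{p'}$ requires $(s_+)^\sharp(s_0)^\sharp\mid p_2$ (the factors of $s^\sharp$ with roots in $\ov\BD$), leaving $v=r/(s_-)^\sharp$ with $\deg r<\deg(s_-)-m$ and $g=q^\sharp r/(s_-)^\sharp$, as in \eqref{Tom*RanKer'}. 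Counting $r$ gives $\dim\kernel(T_\om^*)=\max\{0,\deg(s_-)-m\}$. Finally the reflection $\al\mapsto 1/\ov\al$ (with $0\mapsto\infty$) carries the roots of $s_-$, that is the zeroes of $\om$ inside $\BT$, to the zeroes of $\om^*$ outside $\BT$, and $\deg q=m$ to the number of poles of $\om^*$ on $\BT$; rewriting $\deg(s_-)\le m$ in this language yields the stated dimension formula and the injectivity criterion.
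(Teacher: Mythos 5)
Your argument is correct, but it follows a genuinely different route from the paper's. The paper proves the hard inclusion $\Dom(T_\om^*)\subseteq q^\sharp H^{p'}$ head-on: for $g\in\Dom(T_\om^*)$ with $k=T_\om^*g$ it first extracts the polynomial relations \eqref{polyincl} (Lemma \ref{L:incl2}), then uses finite $m\times m$ Toeplitz-matrix identities relating $r$ and $r_1$ in \eqref{r1r} to upgrade these to the exact equation $q^\sharp k=z^{m-n}s^\sharp g$, and finally invokes the coprime-factorization Lemma \ref{L:incl3} to conclude $g\in q^\sharp H^{p'}$; the non-proper case $m<n$ is handled separately by splitting $\om=\om_0+\om_1$. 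You instead set $A:=T_{\om^*}|_{q^\sharp H^{p'}}$, compute $A^*=T_\om$ in a single duality calculation valid for all $m,n$ (the same boundary identities $\ov{q^\sharp}=z^{-m}q$, $\ov{s^\sharp}=z^{-m+n}\,\ov{z^{n}\ov{s}}$ that drive the paper's Lemma \ref{L:incl1}, run in the opposite direction), and then recover $T_\om^*=A^{**}=A$ from reflexivity once $A$ is shown to be closed. This shifts the entire burden onto closedness of $A$, which you handle with a B\'ezout identity for the coprime pair $s^\sharp,q^\sharp$ plus recovery of the finitely many low-order Taylor coefficients of $v_k$ from those of $q^\sharp v_k$ — the latter step is exactly what is needed to reconstitute the part of $s^\sharp v_k$ truncated by $T_{z^{m-n}}$ when $m<n$, and you do address it. Your route avoids both the finite Toeplitz-matrix manipulation and the case split, at the cost of the closedness argument (where the weak-compactness detour is unnecessary: the B\'ezout identity already gives strong convergence of $v_k$); the paper's route yields the intermediate inclusions \eqref{polyincl} as a by-product. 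Two small points of hygiene: the step ``$sf=q\phi+\eta$ with $\eta\in\cP_{m-1}$ iff $f\in\Dom(T_\om)$ and $T_\om f=\phi$'' is not quite ``the very definition \eqref{Tom}'' but the characterization from Lemma 2.3 of \cite{GtHJR1}, which the paper also uses; and in translating $\dim\kernel(T_\om^*)=\max\{0,\deg(s_-)-m\}$ into the statement about zeroes of $\om^*$ outside $\BT$ you must (as you implicitly do via $0\mapsto\infty$) count the zero of $\om^*$ at infinity when $s(0)=0$.
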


We first present some auxiliary lemmas. Throughout, let $1<p,p'<\infty$ such that $1/p+1/p^\prime=1$. We will consider $T_{\om}$ as an operator with domain in $H^p$ and $T_{\om^*}$ as an operator with domain in $H^{p'}$.

\begin{lemma}\label{L:incl1}
Let $\om = s/q\in\Rat(\BT)$ with $s,q\in\cP$ co-prime, $m=\degr(q)$ and $n=\degr(s)$. Then
\[
q^\sharp H^{p'} \subset \Dom(T_\om^*)\cap \Dom(T_{\om^*}) \ands
T_\om^*|_{q^\sharp H^{p'} }=T_{\om^*}|_{q^\sharp H^{p'} }.
\]
Moreover, for $g=q^\sharp v\in q^\sharp H^{p'}$, with $v\in H^{p'}$, we have $T_{\om}^* g= T_{z^{m-n}}s^\sharp v$, and thus $T_\om^* (q^\sharp H^{p'})=T_{z^{m-n}} s^\sharp H^{p'}$.
\end{lemma}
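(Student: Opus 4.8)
The goal of Lemma~\ref{L:incl1} is to show that every element of $q^\sharp H^{p'}$ lies in the domain of the adjoint $T_\om^*$ and that on this subspace $T_\om^*$ agrees with the explicitly computable action of $T_{\om^*}$. The plan is to work directly from the definition of the adjoint via the pairing $\inn{\cdot}{\cdot}_{p,p'}$. For $g = q^\sharp v \in q^\sharp H^{p'}$ to belong to $\Dom(T_\om^*)$, I must produce an element $w \in H^{p'}$ such that $\inn{T_\om f}{g} = \inn{f}{w}$ for all $f \in \Dom(T_\om)$, and then identify $w$ as the claimed value $T_{z^{m-n}} s^\sharp v$.

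First I would take an arbitrary $f \in \Dom(T_\om) = qH^p + \cP_{m-1}$, writing $f = qh + r$ with $h \in H^p$ and $r \in \cP_{m-1}$, so that by the recalled action formula $T_\om f = sh + \wtil{r}$ where $rs = \wtil{r}q + r_2$ with $r_2 \in \cP_{m-1}$. The key computation is then to expand $\inn{T_\om f}{q^\sharp v}$ using the integral pairing. Since $q^\sharp(z) = z^m \ov{q(z)}$ on $\BT$, multiplying by $\ov{q^\sharp(z)} = z^{-m} q(z)$ inside the integrand lets me convert the pairing against $q^\sharp v$ into a pairing involving $q$. The heart of the matter is to verify that the polynomial correction terms ($\wtil r$ and the relation $rs = \wtil r q + r_2$) integrate away correctly: the orthogonality relations coming from $\frac{1}{2\pi}\int_\BT z^k\,dz$ should cause the contributions of $r$, $\wtil r$, and $r_2$ to cancel, leaving precisely the pairing of $f$ against $T_{z^{m-n}} s^\sharp v$. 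Here I would use the explicit formula $\om^*(z) = z^{m-n} s^\sharp(z)/q^\sharp(z)$ from \eqref{om*}, which holds on $\BT$ in both the proper and non-proper cases, to recognize the resulting symbol as $\om^*$ acting on $v$ after multiplication by $q^\sharp$.

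Once the pairing identity $\inn{T_\om f}{q^\sharp v} = \inn{f}{T_{z^{m-n}} s^\sharp v}$ is established for all $f \in \Dom(T_\om)$, with $T_{z^{m-n}} s^\sharp v \in H^{p'}$, this simultaneously shows $q^\sharp v \in \Dom(T_\om^*)$ and gives $T_\om^* (q^\sharp v) = T_{z^{m-n}} s^\sharp v$. The inclusion $q^\sharp H^{p'} \subset \Dom(T_{\om^*})$ follows directly from Proposition~\ref{P:Tom*}, which records $\Dom(T_{\om^*}) = q^\sharp H^{p'} + \cP_{m-1}$; and the same proposition's description of the action of $T_{\om^*}$ (or rather the reduction via \eqref{zkappaTom} to $T_{s^\sharp/q^\sharp}$) shows that $T_{\om^*}(q^\sharp v) = T_{z^{m-n}} s^\sharp v$ as well, whence the two operators agree on $q^\sharp H^{p'}$. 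The final assertion $T_\om^*(q^\sharp H^{p'}) = T_{z^{m-n}} s^\sharp H^{p'}$ is then immediate as $v$ ranges over $H^{p'}$.

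The main obstacle I anticipate is the careful bookkeeping of the polynomial terms in the pairing computation. The operator $T_\om$ discards the projection onto the rational part, and the adjoint must be insensitive to exactly the right polynomial corrections; verifying that the $\cP_{m-1}$-valued pieces ($r$, $r_2$, and $\wtil r$) contribute zero to the pairing against $q^\sharp v$ requires using both the degree bounds and the factor $z^m$ built into $q^\sharp$. A subtle point is handling the case $m < n$, where $\om^*$ is not literally a ratio of two polynomials and one must justify via \eqref{zkappaTom} that the operator $T_{z^{m-n}} s^\sharp$ (with $z^{m-n}$ a negative power) is the correct interpretation; keeping the symbol manipulation valid as an identity on $\BT$ rather than as rational functions is where I would need to be most attentive.
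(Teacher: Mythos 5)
Your proposal is correct and follows essentially the same route as the paper: compute the pairing $\inn{T_\om f}{q^\sharp v}$ directly, use $q^\sharp(z)=z^m\ov{q(z)}$ on $\BT$ to shift the conjugate factor, kill the $\cP_{m-1}$ terms against $z^m v$ by the degree bound, and identify the result as $\inn{f}{T_{z^{m-n}}s^\sharp v}$, treating the case $m<n$ via $T_{\om^*}=T_{z^{m-n}}T_{s^\sharp/q^\sharp}$. The only cosmetic difference is that the paper encodes the action of $T_\om$ through the single relation $sf=qh+r$ with $h=T_\om f$ and $r\in\cP_{m-1}$, which lets one polynomial term absorb all the bookkeeping you anticipate with $r$, $\wtil{r}$ and $r_2$.
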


\begin{proof}[\bf Proof]
The inclusion $q^\sharp H^{p'} \subset \Dom(T_{\om^*})$ follows from Proposition \ref{P:Tom*}.
Let $g$ be in $q^\sharp H^{p'}$, say $g(z)=q^\sharp(z)v(z)$ for $v\in H^{p'}$. We show that for $f\in \Dom(T_\om)$ we have $\inn{T_w f}{g}_{p,p'}=\inn{f}{T_{\om^*} g}_{p,p'}$. Let $f\in\Dom(T_\om)$ and $h=T_{\om}f\in H^p$, i.e., $sf=qh+r$ for some $r\in \cP_{m-1}$, by \cite[Lemma 2.3]{GtHJR1}. Then
\begin{align*}
\langle T_\om f, g \rangle_{p,p'}
&=\langle h, q^\sharp v\rangle_{p,p'}=\langle h, z^m \overline{q} v\rangle_{p,p'} =\langle qh, z^m  v\rangle_{p,p'} \\
& = \langle sf-r, z^m v\rangle_{p,p'} =\langle sf, z^m v\rangle_{p,p'} \quad (\mbox{because  $\degr(r)<m$, $v\in H^{p'}$})\\
&=\!\langle f, z^m \overline{s}v\rangle_{p,p'} \!=\!\langle f, z^{m-n} s^\sharp v\rangle_{p,p'}
\!=\!\langle f, T_{z^{m-n}} s^\sharp v\rangle_{p,p'}\ (\textup{because } f\in H^p).
\end{align*}
It remains to show that $T_{\om^*}g=T_{z^{m-n}} s^\sharp v$. If $m\geq n$, then $\om^*=z^{m-n}s^\sharp/q^\sharp$ is in $\Rat(\BT)$ and $\om^* g=z^{m-n}s^\sharp v\in H^{p'}$, so that, $T_{\om^*}g=z^{m-n} s^\sharp v= T_{z^{m-n}} s^\sharp v$, by Lemma 2.3 in \cite{GtHJR1}. In case $m<n$, we have $T_{\om^*} g=T_{z^{m-n}}T_{s^\sharp/q^\sharp}g=T_{z^{m-n}} s^\sharp v$.
\end{proof}

\begin{lemma}\label{L:incl2}
Let $\om = s/q\in\Rat(\BT)$ with $s,q\in\cP$ co-prime, $m=\degr(q)$ and $n=\degr(s)$. Let $g\in\Dom(T_\om^*)$ and $k=T_\om^* g\in H^{p'}$. Then for any $r\in \cP_{n-1}$ and $r_1\in\cP_{m-1}$ so that
\begin{equation}\label{r1r}
sr_1=qr +r_2 \mbox{ for some } r_2 \in \cP_{m-1}
\end{equation}
we have
\[
\inn{r_1}{k}_{p,p'}=\inn{r}{g}_{p,p'}.
\]
Moreover, we have
\begin{equation}\label{polyincl}
z^{m-n}s^\sharp g- q^\sharp k\in\cP_{m-1} \mbox{ if $m\geq n$}\ands
s^\sharp g- z^{n-m}q^\sharp k\in\cP_{n-1}\mbox{ if $m< n$}.
\end{equation}
In particular, $ \Dom(T_\om^*) \subset \Dom(T_{\om^*})$ and  $T_\om^*=T_{{\om^*}}|_{\Dom(T_\om^*)}$.
\end{lemma}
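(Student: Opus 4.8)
The plan is to prove Lemma~\ref{L:incl2} in two movements: first the adjoint pairing identity and its polynomial consequence \eqref{polyincl}, and then the global inclusion $\Dom(T_\om^*)\subset\Dom(T_{\om^*})$ together with the equality of the actions. Throughout, let $g\in\Dom(T_\om^*)$ and $k=T_\om^*g$, so that by the definition of the adjoint,
\[
\inn{T_\om f}{g}_{p,p'}=\inn{f}{k}_{p,p'}\qquad\text{for all }f\in\Dom(T_\om).
\]

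The first step is to establish the pairing identity $\inn{r_1}{k}_{p,p'}=\inn{r}{g}_{p,p'}$ under the compatibility condition \eqref{r1r}. The idea is to feed a suitable $f\in\Dom(T_\om)$ into the adjoint relation. Since $\cP_{m-1}\subset qH^p+\cP_{m-1}=\Dom(T_\om)$ by \eqref{DomRanId}, any $r_1\in\cP_{m-1}$ lies in the domain. Using the action formula for $T_\om$ recalled in Section~\ref{S:Tom*}, one computes $T_\om r_1$: writing $sr_1=qr+r_2$ with $r_2\in\cP_{m-1}$ as in \eqref{r1r}, the description of $T_\om$ gives $T_\om r_1=r$ (the quotient polynomial), since $r_1=q\cdot 0+r_1$ has no $qH^p$ part. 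Substituting $f=r_1$ into the adjoint relation then yields $\inn{r}{g}_{p,p'}=\inn{T_\om r_1}{g}_{p,p'}=\inn{r_1}{k}_{p,p'}$, as claimed.

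The second step converts this family of scalar identities into the polynomial membership \eqref{polyincl}. First I would treat the case $m\geq n$. I expect to integrate against test functions: the identity $\inn{r_1}{k}=\inn{r}{g}$ for all admissible pairs should be reorganized, using the definition $\inn{f}{h}=\frac{1}{2\pi}\int_\BT\ov{h}f\,dz$ and the relation $q^\sharp(z)=z^m\ov{q(z)}$, $s^\sharp(z)=z^n\ov{s(z)}$ on $\BT$, into a statement that the function $z^{m-n}s^\sharp g-q^\sharp k$ is orthogonal to a cofinite family of monomials, forcing it to be a polynomial of degree at most $m-1$. Concretely, the Fourier coefficients of $z^{m-n}s^\sharp g-q^\sharp k$ of index $\geq m$ should all vanish because $g,k\in H^{p'}$ contribute only nonnegative frequencies and the relation \eqref{r1r} exactly balances the top-degree contributions of $s$ against $q$. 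The case $m<n$ is handled analogously, accounting for the factor $z^{n-m}$ as in the two-sided formula \eqref{om*}; here one works with $s^\sharp g-z^{n-m}q^\sharp k\in\cP_{n-1}$.

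The final step deduces $g\in\Dom(T_{\om^*})$ and $T_{\om^*}g=k$ from \eqref{polyincl}. In the case $m\geq n$, where $\om^*=z^{m-n}s^\sharp/q^\sharp\in\Rat(\BT)$, the membership $z^{m-n}s^\sharp g-q^\sharp k\in\cP_{m-1}$ says precisely that $\om^* g=k+(\text{poly})/q^\sharp$ with the polynomial remainder of degree $<m=\deg(q^\sharp)$, i.e.\ $\om^*g=k+\rho$ with $\rho\in\Rat_0(\BT)$; by the definition \eqref{Tom} of the Toeplitz-like operator this means $g\in\Dom(T_{\om^*})$ with $T_{\om^*}g=\BP k=k$, since $k\in H^{p'}$. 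The case $m<n$ uses \eqref{zkappaTom} to reduce to $T_{s^\sharp/q^\sharp}$ and the corresponding membership statement. The main obstacle I anticipate is the second step: carefully checking that the scalar identities from \eqref{r1r}, ranging over all valid pairs $(r,r_1)$, are \emph{exactly enough} to pin down all high-frequency Fourier coefficients and no more, so that one lands on degree $m-1$ (resp.\ $n-1$) and not a weaker bound. This requires a precise count of the dimension of the solution space of \eqref{r1r} and matching it against the degrees of freedom in the coefficients being controlled.
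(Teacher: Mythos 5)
Your first and last steps are sound and agree with the paper: testing the adjoint identity with $f=r_1\in\cP_{m-1}\subset\Dom(T_\om)$ and using $T_\om r_1=r$ gives $\inn{r_1}{k}_{p,p'}=\inn{r}{g}_{p,p'}$, and once \eqref{polyincl} is available, the passage to $g\in\Dom(T_{\om^*})$ and $T_{\om^*}g=k$ goes essentially as you describe (in the case $m<n$ one still has to divide the remainder $\wtil{r}\in\cP_{n-1}$ by $q^\sharp$ before invoking the definition of $\Dom(T_{s^\sharp/q^\sharp})$, but that is routine).

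The genuine gap is in your second step. The identities $\inn{r_1}{k}=\inn{r}{g}$, as $(r_1,r)$ ranges over all pairs satisfying \eqref{r1r}, form a family parametrized by $r_1\in\cP_{m-1}$ alone, since $r$ is the quotient in the division of $sr_1$ by $q$ and hence determined by $r_1$. That is an at most $m$-dimensional family of linear conditions, and it only involves the first $m$ Taylor coefficients of $k$ and the first $n$ Taylor coefficients of $g$; it cannot force the vanishing of the infinitely many Fourier coefficients of index $\geq m$ of $z^{m-n}s^\sharp g-q^\sharp k$. Your own phrase about orthogonality to a ``cofinite family of monomials'' already reveals the mismatch between finitely many hypotheses and infinitely many conclusions, and the ``precise count'' you defer to at the end cannot come out in your favour. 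The missing ingredient is the other half of the adjoint identity: for a general $f=qh+r_1\in\Dom(T_\om)$ one has $T_\om f=sh+r$, and since $h\in H^p$ and $r_1\in\cP_{m-1}$ may be chosen independently, the relation $\inn{T_\om f}{g}=\inn{f}{k}$ splits into your polynomial identity \emph{and} the identity $\inn{sh}{g}=\inn{qh}{k}$ for all $h\in H^p$. The latter is equivalent to $\BP\bigl(z^{-n}s^\sharp g-z^{-m}q^\sharp k\bigr)=0$, and since $z^{l}\bigl(z^{-n}s^\sharp g-z^{-m}q^\sharp k\bigr)\in H^{p'}$ for $l=\max\{m,n\}$, these two facts together yield exactly \eqref{polyincl}. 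With that substitution your outline becomes the paper's proof; without it, the second step fails.
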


\begin{proof}[\bf Proof]
Let $g\in\Dom(T_\om^*)$ and $k= T_\om^* g$. Hence $\inn{T_\om f}{g}_{p,p'}=\inn{f}{k}_{p,p'}$ for each $f\in\Dom(T_\om)$. Since $\om\in\Rat(\BT)$, we have $\Dom(T_\om)=q H^p+\cP_{m-1}$. Let $f=qh +r_1\in \Dom(T_\om)$, with $h\in H^p$ and $r_1\in \cP_{m-1}$. Then $T_\om f= sh +r$ where $r\in\cP_{n-1}$ is uniquely determined by \eqref{r1r}. Thus
\begin{align*}
\inn{sh}{g}+\inn{r}{g}=\inn{sh+r}{g}=\inn{T_\om f}{g}=\inn{f}{k}=\inn{qh+r_1}{k}=\inn{qh}{k}+\inn{r_1}{k}.
\end{align*}
We obtain that
\[
\inn{sh}{g}-\inn{qh}{k}=\inn{r_1}{k}-\inn{r}{g}.
\]
However, in choosing $f\in \Dom(T_\om)$ we can choose $h\in H^p$ and $r_1\in\cP_{m-1}$ independently, and in particular set one or the other equal to zero, resulting in
\begin{align*}
\inn{sh}{g}=\inn{qh}{k}\ \ \ (h\in H^p),\ \ \  &
\inn{r_1}{k}=\inn{r}{g}\ \ \ \mbox{($r\in\cP_{n-1},r_1\in\cP_{m-1}$ as in \eqref{r1r})}.
\end{align*}
The second identity proves the first claim of the lemma. From the first identity we obtain that
\begin{align*}
0 & = \inn{h}{\ov{s}g-\ov{q}k}_{p,p'}=\inn{h}{z^{-n}s^\sharp g-z^{-m}q^\sharp k}_{p,p'} \quad (h\in H^p).
\end{align*}
Thus $\BP(z^{-n}s^\sharp g-z^{-m}q^\sharp k)=0$. On the other hand, for  $l=\max\{m,n\}$ we have
\[
z^l(z^{-n}s^\sharp g-z^{-m}q^\sharp k)=z^{l-n}s^\sharp g-z^{l-m}q^\sharp k\in H^{p'}.
\]
This can only occur if $z^{l-n}s^\sharp g-z^{l-m}q^\sharp k\in \cP_{l-1}$, which proves the second claim.

To complete the proof, we show that $g\in\Dom(T_{\om^*})$ and $T_{\om^*}g=k$. For $m\geq n$ we have $\om^*\in \Rat(\BT)$ and the first inclusion of \eqref{polyincl} can be rewritten as
\[
\om^* g= \left(\frac{z^{m-n} s^\sharp}{q^\sharp}\right) g= k +\wtil{r}/q^\sharp,\quad \mbox{for some\ \  $\wtil{r}\in \cP_{m-1}$}.
\]
Since $\deg(q^\sharp)=\deg(q)=m$, it now follows that $g\in \Dom(T_{\om^*})$ and $T_{\om^*}g=k$. In case $m<n$ we have $T_{\om^*}=T_{z^{m-n}}T_{s^\sharp/q^\sharp}$ and $s^\sharp/q^\sharp\in \Rat(\BT)$. Now the second inclusion of \eqref{polyincl} gives
\[
\left(\frac{s^\sharp}{q^\sharp}\right) g= z^{n-m} k +\wtil{r}/q^\sharp,\quad \mbox{for some\ \  $\wtil{r}\in \cP_{n-1}$}.
\]
Write $\wtil{r}= \wtil{r}_1 q^\sharp + \wtil{r}_2$ with $\wtil{r}_2\in\cP_{m-1}$. Then $\wtil{r}/q^\sharp= \wtil{r}_1+ \wtil{r}_2/q^\sharp $ and $\deg(\wtil{r}_1)<m-n$. Since $\wtil{r}_2/q^\sharp\in \Rat_0(\BT)$ it follows that $g\in\Dom(T_{s^\sharp/q^\sharp})=\Dom(T_{\om^*})$ and $T_{s^\sharp/q^\sharp}g = z^{n-m} k +\wtil{r}_1$. But then $T_{\om^*}g= T_{z^{m-n}}T_{s^\sharp/q^\sharp}g= T_{z^{m-n}}(z^{n-m} k +\wtil{r}_1)=k$.
\end{proof}

A special case of the following result was proven as part of the proof of Theorem 2.2 in \cite{GtHJR2}.

\begin{lemma}\label{L:incl3}
Let $r,\wtil{r}\in\cP$ be co-prime. Then $r H^p\cap \wtil{r} H^p=r\wtil{r} H^p$.
\end{lemma}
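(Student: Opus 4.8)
The plan is to prove the two inclusions separately, with all the real content sitting in the reverse inclusion, which I would handle purely algebraically via Bézout's identity.

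The forward inclusion $r\wtil{r}H^p\subseteq rH^p\cap \wtil{r}H^p$ is immediate and I would dispose of it in one line: if $f=r\wtil{r}u$ with $u\in H^p$, then $f=r(\wtil{r}u)\in rH^p$ and $f=\wtil{r}(ru)\in \wtil{r}H^p$, since $\wtil{r}u,ru\in H^p$ (a polynomial times an $H^p$ function is again in $H^p$, as any polynomial is bounded on $\ov{\BD}$).

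For the reverse inclusion I would take $f\in rH^p\cap \wtil{r}H^p$ and write $f=rg=\wtil{r}h$ with $g,h\in H^p$. Consider the \emph{a priori} only meromorphic function $u:=f/(r\wtil{r})$ on $\BD$; the goal is to show $u\in H^p$, for then $f=r\wtil{r}u\in r\wtil{r}H^p$. The key tool is that $\cP=\BC[z]$ is a principal ideal domain, so coprimeness of $r$ and $\wtil{r}$ yields $a,b\in\cP$ with $ar+b\wtil{r}=1$. Multiplying this identity by $u$ and noting that $ru=f/\wtil{r}=h$ and $\wtil{r}u=f/r=g$, I obtain
\[
u=a(ru)+b(\wtil{r}u)=ah+bg.
\]
Since $g,h\in H^p$ and multiplication by a polynomial preserves $H^p$, the right-hand side lies in $H^p$; hence $u\in H^p$, completing the proof.

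I do not expect a serious obstacle here: the argument needs no case analysis on the location of the roots of $r$ and $\wtil{r}$ relative to $\BT$, because everything rests on the algebraic Bézout relation together with the elementary boundedness of polynomials on $\ov{\BD}$. The one point deserving care is conceptual rather than technical, namely the passage from $u$ being merely meromorphic to $u$ being genuinely in $H^p$: the explicit representation $u=ah+bg$ settles this at once, since it exhibits $u$ as an honest element of $H^p$ (in particular, it shows the potential poles of $f/(r\wtil{r})$ inside $\BD$ are in fact cancelled).
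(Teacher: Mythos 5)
Your proof is correct, but it takes a genuinely different route from the paper's. You reduce everything to the B\'ezout relation $ar+b\wtil{r}=1$ in $\cP$, which immediately exhibits $u=f/(r\wtil{r})$ as $ah+bg$ with $g,h\in H^p$ the given cofactors; since polynomial multiplication preserves $H^p$, both the analyticity of $u$ on $\BD$ and the $L^p$ boundary integrability come for free in one stroke. The paper instead argues analytically: it first shows $g/\wtil{r}$ is analytic on $\BD$ (if it had a pole at a root of $\wtil{r}$ in $\BD$, then $f=r\cdot(g/\wtil{r})$ would too, by coprimeness), and then splits $\BT$ into two disjoint finite unions of arcs $\BT_1,\BT_2$ whose interiors contain the roots of $r$ and $\wtil{r}$ respectively, estimating $\int_{\BT_i}|\wtil{f}|^p$ on each piece using a lower bound for $|\wtil{r}|$ on $\BT_1$ and for $|r|$ on $\BT_2$. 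Your argument is shorter, needs no geometric partition of the circle and no lower bounds on $|r|$ or $|\wtil{r}|$, and makes the cancellation of the potential poles completely explicit; the paper's argument, while longer, is self-contained at the level of elementary estimates and does not invoke the ring structure of $\cP$. The only point worth stating explicitly in your write-up is that the identities $ru=h$, $\wtil{r}u=g$ and hence $u=ah+bg$ hold a priori only off the finitely many zeros of $r\wtil{r}$ in $\BD$, so one concludes $f=r\wtil{r}(ah+bg)$ everywhere by the identity theorem; this is a one-line remark and not a gap.
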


\begin{proof}[\bf Proof]
Let $\wtil{r} f= rg$ with $f,g\in H^p$. Then $f=r \cdot g/\wtil{r}\in H^p$, so we should show $\wtil{f}:=g/\wtil{r}\in H^p$, i.e., $\wtil{f}$ analytic on $\BD$ and $\int_\BT |\wtil{f}(z)|^p\, dz<\infty$.

Since $g\in H^p$, the function $\wtil{f}$ can only fail to be analytic at the roots of $\wtil{r}$ inside $\BD$. However, if this were the case, then $f=r \wtil{f}$ would also fail to be analytic in $\BD$, since $r$ and $\wtil{r}$ are co-prime. Thus $\wtil{f}$ is analytic on $\BD$.

Divide $\BT$ as $\BT_1\cup \BT_2$ with $\BT_1\cap \BT_2=\emptyset$ in such a way that $\BT_1$ and $\BT_2$ are both nonempty finite unions of line segments of $\BT$ so that the interior of $\BT_1$ contains the roots of $r$ and the interior of $\BT_2$ the roots of $\wtil{r}$. Then $|\wtil{r}(z)|> N_1$ on $\BT_1$ and $|r(z)|> N_2$ on $\BT_2$ for some $N_1,N_2>0$. Note that $f=r \wtil{f}$ and $g=\wtil{r} \wtil{f}$. We then obtain
\begin{align*}
 \int_{\BT_2} |\wtil{f}(z)|^p\, dz
 & = \int_{\BT_2} |f(z)/r(z)|^p\, dz
 \leq N_2^{-p} \int_{\BT_2} |f(z)|^p\, dz
 \leq (2\pi N_2^{p})^{-1} \|f\|_{H^p}^p.
\end{align*}
Using $g=\wtil{r} \wtil{f}$, one obtains similarly that $\int_{\BT_1} |\wtil{f}(z)|^p\, dz\leq (2\pi N_1^{p})^{-1} \|g\|_{H^p}^p$. Thus $\int_\BT |\wtil{f}(z)|^p\, dz<\infty$.
\end{proof}

\begin{proof}[\bf Proof of Theorem \ref{T:main1'}] By Lemma \ref{L:incl1}, in order to prove \eqref{Tom*'}, the formula for the action of $T_\om^*$ on $q^\sharp H^{p'}$ and for the range of $T_{\om}^*$ in \eqref{Tom*RanKer'}, it remains to show that  $\Dom(T_\om^*)\subset q^\sharp H^{p'}$.

View $\cP$ and $\cP_k$, $k=1,2,\ldots$, as subspaces of $H^{p}$ or $H^{p'}$, write $P_k$ for the projection onto $\cP_{k-1}$ and set $Q_k=I-P_k$. Also, the standard $k\times k$ compression of a Toeplitz operator $T_{\phi}$ on $H^p$ (or $H^{p'}$) is denoted by $T_{\phi,k}$, i.e., $T_{\phi,k}=P_k T_{\phi}|_{\cP_{k-1}}$. Now, the relation \eqref{r1r} between $r\in\cP_{n-1}$ and $r_1\in\cP_{m-1}$ can be rewritten as
\[
T_s r_1-T_q r\in \cP_{m-1},
\]
or, equivalently, as
\begin{equation}\label{TsTrRel}
Q_mT_s P_m r_1 = Q_mT_s r_1 = Q_m T_q r = Q_m T_q P_n r.
\end{equation}
We now consider the cases $m\geq n$ and $m<n$ separately.

First assume $m\geq n$. We can then decompose $Q_mT_s P_m$ and $Q_m T_q P_n$ as
\begin{align*}
Q_mT_s P_m &=\mat{cc}{0& T_{s^\sharp,n}^* T_{z^{m-n}}^*\\ 0 & 0}:\cP_{m-1}=\mat{c}{\cP_{m-n}\\ T_{z^{m-n}} \cP_{n-1}}\to \mat{c}{\cP_{n-1}\\ T_{z}^{n}H^p},\\
Q_mT_q P_n &=\mat{c}{T_{q^\sharp,n}^*\\ 0}:\cP_{n-1}\to \mat{c}{\cP_{n-1}\\ T_{z^{n}}H^p}.
\end{align*}
Hence, in this case the identity in \eqref{TsTrRel} can be write as
\[
 T_{s^\sharp,n}^* (T_{z^{m-n}}^* r_1) = T_{q^\sharp,n}^* r.
\]
Since all Toeplitz matrices are upper triangular, we in fact have
\[
T_{s^\sharp,m}^* T_{z^{m-n},m}^* r_1=
T_{q^\sharp,m}^* r.
\]
Note that $T_{q^\sharp,n}^*$ is invertible, because $q$ has only roots on $\BT$ so that $q(0)\neq0$. We obtain that for given $r_1\in \cP_{m-1}$, the polynomial $r\in\cP_{n-1}$ that satisfies \eqref{r1r} is uniquely determined by
\begin{align*}
r&= (T_{q^\sharp,m}^*)^{-1} T_{s^\sharp,m}^* T_{z^{m-n},m}^* r_1
= T_{s^\sharp,m}^*T_{z,m}^{*m-n}(T_{q^\sharp,m}^*)^{-1} r_1,
\end{align*}
where the commutation of Toeplitz matrices can occur since they all have analytic symbols. Now take $r_1\in \cP_{m-1}$ arbitrary, and define $r$ as above, so that \eqref{r1r} holds. Then, by Lemma \ref{L:incl2}, we have
\begin{align*}
\inn{r_1}{P_m k}_{\cP_{m-1}}
&=\inn{r_1}{k}_{p,p'}=\inn{r}{g}_{p,p'} =\inn{r}{P_m g}_{\cP_{m-1}}\\
&=\inn{T_{s^\sharp,m}^*T_{z,m}^{*m-n}(T_{q^\sharp,m}^*)^{-1} r_1}{P_m g}_{\cP_{m-1}}\\
&=\inn{r_1}{(T_{q^\sharp,m})^{-1} T_{z,m}^{m-n} T_{s^\sharp,m} P_m g}_{\cP_{m-1}}.
\end{align*}
Since $r_1\in\cP_{m-1}$ is arbitrary, we obtain that $P_m k=(T_{q^\sharp,m})^{-1} T_{z,m}^{m-n} T_{s^\sharp,m} P_m g$, and thus
\[
P_m  T_{q^\sharp} k=
T_{q^\sharp,m} P_m k= T_{z,m}^{m-n} T_{s^\sharp,m} P_m g
=P_m T_{z}^{m-n} T_{s^\sharp} g.
\]
This shows that $P_m q^\sharp k= P_m z^{m-n} s^\sharp g$. Together with the first inclusion in \eqref{polyincl} we obtain that
\[
q^\sharp k= z^{m-n} s^\sharp g.
\]
Since $q^\sharp$ and $z^{m-n} s^\sharp$ are co-prime, we can apply Lemma \ref{L:incl3} to conclude  $g\in q^\sharp H^{p'}$.

Next assume $m<n$. We can then write $\om = \om_0 + \om_1$ uniquely with $\om_0\in\Rat_0(\BT)$ and $\om_1\in\Rat$ with no poles on $\BT$, i.e, $\om_1\in L^\infty (\BT)$, see
 \cite[Lemma 2.4]{GtHJR1}. In fact $\om_1\in\cP$, since all poles of $\om$ are on $\BT$, and $\om_0=\wtil{s}/q$ with $\wtil{s}\in\cP_{m-1}$. It now follows that $\Dom(T_{\om_0}^*)=q^\sharp H^{p'}$, and since $T_{\om_1}$ is bounded, $\Dom(T_\om^*)=\Dom(T_{\om_0}^*)=q^\sharp H^{p'}$. Furthermore, $T_{\om}^*= T_{\om_0}^*+ T_{\om_1}^*|_{q^\sharp H^{p'}}=T_{\om_0^*}|_{q^\sharp H^{p'}}+ T_{\om_1^*}|_{q^\sharp H^{p'}}=T_{\om^*}|_{q^\sharp H^{p'}}$.

In the next part of the proof we prove the formula for $\kernel(T_{\om^*})$, without distinguishing between the proper and non-proper case. Let $g=q^\sharp v\in\Dom(T_\om^*)$ with $v\in H^{p'}$.  Then $g\in \kernel(T_{\om}^*)$ if and only if $g\in \kernel(T_{\om^*})$, i.e., $g=q^\sharp v=r_1/(s_-)^\sharp$ for $r_1\in\cP_{\deg(s_-)-1}$, see Proposition \ref{P:Tom*}. Thus $v=r_1/((s_-)^\sharp q^\sharp)\in\Rat \cap H^{p'}$.  Then $v\in H^{p'}$  implies $r_1=q^\sharp r$, and $\deg(r)=\deg(r_1)-m<\deg(s_-)-m$. Hence $g=q^\sharp r/(s_-)^\sharp$ with $\deg(r)<\deg(s_-)-m$. That all such functions are in $\kernel(T_{\om}^*)=\kernel(T_{\om^*})\cap q^\sharp H^{p'}$ follows directly from the formula for $\kernel(T_{\om^*})$ obtained in Proposition \ref{P:Tom*}. The formula for the dimension of $\kernel(T_\om^*)$ follows directly and the condition for injectivity follows since $\deg(s_-)^\sharp$ is equal to the number on nonzero roots of $s_-$, counting multiplicity.
%
\end{proof}

\section{The adjoint of $T_{\om}$: General case}\label{S:general}

In the section we prove Theorem \ref{T:main1} in full generality. Hence let $\om=s/q\in \Rat$ with $s,q\in\cP$ co-prime. As in Theorem \ref{T:main1}, factor $s=s_-s_0s_+$ and $q=q_-q_0q_+$ with $s_-,q_-$ having roots only inside $\BT$, $s_0,q_0$ having roots only on $\BT$, and $s_+,q_+$ having roots only outside $\BT$. Set $m=\deg(q)$, $n=\deg(s)$,  $m_{\pm}=\degr(q_{\pm})$, $n_{\pm}=\degr(s_{\pm})$, and $m_{0}=\degr(q_{0})$, $n_{0}=\degr(s_{0})$. By Lemma 5.1 in \cite{GtHJR1}, and its proof, we can factor $\om$ as $\om=\om_- (z^\kappa \om_0) \om_+$ with $\kappa=n_- - m_-$, $\om_-=s_-/(z^{\kappa}q_-)$ having only poles and zeroes inside $\BT$, $\om_0=s_0/q_0$ having only poles and zeroes on $\BT$, and $\om_+=s_+/q_+$ having only poles and zeroes outside $\BT$, and we have $T_{\om}=T_{\om_-} T_{z^\kappa \om_0} T_{\om_+}$. Moreover, $T_{\om_-}$ and $T_{\om_+}$ are bounded and boundedly invertible.

Note that $T_{\omega_-}T_{z^\kappa\om_0}$ is closed and densely defined and ${\Ran}(T_{\om_+})=H^p$, and thus by Corollary 1 in \cite{Schechter70}
$$
T_\om^*=T_{\om_+}^* \left(T_{\om_-}T_{z^\kappa\om_0}\right)^*.
$$
Furthermore, $T_{\om_-}$ is bounded and $T_{z^\kappa\om_0}$ is closed and densely defined. By Theorem 4 in \cite{CasterenGoldberg70} one has
$$
 \left(T_{\om_-}T_{z^\kappa\om_0}\right)^* =
 T_{z^\kappa\om_0}^*T_{\om_-}^*.
$$
Combining this and using that $T_{\om_+}^*=T_{\om_+^*}$ and $T_{\om_-}^*=T_{\om_-^*}$ we see that
$$
T_\om^*=T_{\om_+}^*T_{z^\kappa\om_0}^*T_{\om_-}^*=T_{\om_+^*}T_{z^\kappa\om_0}^*T_{\om_-^*} \quad \mbox{on $\Dom(T_{\om}^*)$.}
$$
Note that
\[
\om_-^*=\frac{(s_-)^\sharp}{(q_-)^\sharp},\ \
\om_0^*=z^{m_0-n_0}\frac{(s_0)^\sharp}{(q_0)^\sharp},\ \
(z^\kappa\om_0)^*=z^{m_0-n_0-\kappa}\frac{(s_0)^\sharp}{(q_0)^\sharp},\ \
\om_+^*=z^{m_+-n_+}\frac{(s_+)^\sharp}{(q_+)^\sharp}.
\]

By construction, $\om_-$ and $1/\om_-$ are both anti-analytic. Consequently, $\om_-^*$ and $1/\om_-^*$ are both analytic functions. This implies $T_{\om_-^*}^{\pm} (q_0)^\sharp H^{p'}\subset (q_0)^\sharp H^{p'}$, and thus $T_{\om_-^*} (q_0)^\sharp H^{p'}= (q_0)^\sharp H^{p'}$. Since $T_{\om_+^*}$ is invertible, to see that $\Dom(T_\om^*)=(q_0)^\sharp H^{p'}$ it suffices to show $\Dom(T_{z^\kappa\om_0}^*)=(q_0)^\sharp H^{p'}$. For the case where $\kappa \geq 0$, so that $z^\kappa \om_0\in\Rat(\BT)$, this follows directly from Theorem \ref{T:main1'}. For $\kappa<0$, note that $T_{z^\kappa\om_0}=T_{z^\kappa}T_{\om_0}$, so that $T_{z^\kappa\om_0}^*=T_{\om_0}^*T_{z^\kappa}^*=T_{\om_0}^*T_{z^{-\kappa}}$, again using Theorem 4 of \cite{CasterenGoldberg70}. Then $g\in \Dom(T_{z^\kappa\om_0}^*)$ holds if and only if $z^{-\kappa} g\in \Dom(T_{\om_0}^*)=(q_0)^\sharp H^{p'}$. By Lemma \ref{L:incl3} this is the same as $g\in (q_0)^\sharp H^{p'}$, since $z^{-\kappa}$ and $q_0^\sharp$ are co-prime. Thus in both cases we arrive at $\Dom(T_\om^*)=(q_0)^\sharp H^{p'}$. Moreover, we also find that $T_{z^\kappa \om_0}^*=T_{(z^\kappa \om_0)^*}|_{(q_0)^\sharp H^{p'}}$, so that
\[
T_\om^*=T_{\om_+^*}T_{z^\kappa\om_0}^*T_{\om_-^*}=T_{\om_+^*}T_{(z^\kappa\om_0)^*}T_{\om_-^*}|_{(q_0)^\sharp H^{p'}}
=T_{\om^*}|_{(q_0)^\sharp H^{p'}}.
\]
Hence \eqref{Tom*} holds.

Next we derive the formula for $\kernel (T_{\om}^*)$. For $\kappa\geq0$ we have $g\in \kernel (T_{\om}^*)$ if and only if $T_{\om_-^*}g\in \kernel (T_{z^\kappa\om_0}^*)= (q_0)^\sharp \cP_{\kappa -m_0-1}$, where the last identity follows by applying Theorem \ref{T:main1'} to $z^\kappa \om_0$. Thus $g\in \kernel (T_{\om}^*)$ if and only if $((s_-)^\sharp/(q_-)^\sharp)g=(q_0)^\sharp r$, i.e., $g=(q_-)^\sharp (q_0)^\sharp r/(s_-)^\sharp$, for some $r\in \cP_{\kappa -m_0-1}$, as claimed. For $\kappa<0$ we have $g\in \kernel (T_{\om}^*)$ if and only if $z^{-\kappa} \om_-^* g\in \kernel (T_{\om_0}^*)$. However, $\kernel (T_{\om_0}^*)=\{0\}$, by Theorem \ref{T:main1'}, so that $\kernel (T_{\om}^*)=\{0\}$, in line with the formula in \eqref{Tom*RanKer}. The formula for the dimension of $\kernel(T_\om^*)$ follows directly.

Now we turn to the formula for $\Ran(T_\om^*)$. Note that
\begin{equation}\label{RanTom*}
 \Ran(T_\om^*)= T_{\om_+^*} \Ran (T_{z^\kappa \om_0}^* T_{\om_-^*})= T_{\om_+^*} \Ran (T_{z^\kappa \om_0}^*).
\end{equation}
We first show that $\Ran (T_{z^\kappa \om_0}^*)= T_{z^{m_0-n_0-\kappa}}(s_0)^\sharp H^{p'}$. Again, for the case $\kappa\geq 0$ this follows directly from Theorem \ref{T:main1'}. Assume $\kappa<0$. Then $T_{z^\kappa \om_0}^*=T_{\om_0}^* T_{z^{-\kappa}}$. Hence,
\begin{align*}
\Ran (T_{z^\kappa \om_0}^*)&=T_{\om_0}^* (z^{-\kappa} H^{p'}\cap \Dom(T_{\om_0}))
=T_{\om_0}^* (z^{-\kappa} H^{p'}\cap (q_0)^\sharp H^{p'})\\
&=T_{\om_0}^* z^{-\kappa} (q_0)^\sharp H^{p'}.
\end{align*}
The last identity follows by Lemma \ref{L:incl3}. Now the action of $T_{\om_0}^*$, as described in Theorem \ref{T:main1'}, shows that $\Ran (T_{z^\kappa \om_0}^*)=T_{z^{m_0-n_0}}z^{-\kappa}(s_0)^\sharp H^{p'}
=T_{z^{m_0-n_0-\kappa}}(s_0)^\sharp H^{p'}$. Since $1/q_+$ is analytic, $1/(q_+)^\sharp$ is anti-analytic, and therefore, independent of the sign of $m_+-n_+$, we have
\[
T_{\om_+^*}=T_{1/(q_+)^\sharp} T_{z^{m_+-n_+}} T_{(s_+)^\sharp}.
\]
Thus
\[
\Ran(T_{\om}^*)=T_{1/(q_+)^\sharp} T_{z^{m_+-n_+}} T_{(s_+)^\sharp} T_{z^{m_0-n_0-\kappa}}(s_0)^\sharp H^{p'}.
\]
Note that $T_{(s_+)^\sharp} $ and $ T_{z^{m_0-n_0-\kappa}}$ need not commute, in case $m_0-n_0-\kappa<0$. However, we do have $T_{(s_+)^\sharp}  T_{z^{m_0-n_0-\kappa}}=T_{z^{m_0-n_0-\kappa}} T_{(s_+)^\sharp} Q_{\kappa + n_0-m_0}$. Moreover, since $(s_+)^\sharp$ is analytic, $T_{(s_+)^\sharp} Q_{\kappa + n_0-m_0}=Q_{\kappa + n_0-m_0}T_{(s_+)^\sharp} Q_{\kappa + n_0-m_0}$ and we have
\[
T_{z^{m_+-n_+}} T_{z^{m_0-n_0-\kappa}} Q_{\kappa + n_0-m_0}
=T_{z^{m_+-n_+ + m_0-n_0-\kappa}} Q_{\kappa + n_0-m_0}
=T_{z^{m-n}} Q_{\kappa + n_0-m_0}.
\]
Therefore, we have
\begin{align*}
\Ran(T_{\om}^*)
&=T_{1/(q_+)^\sharp} T_{z^{m-n}} T_{(s_+)^\sharp}Q_{\kappa + n_0-m_0} (s_0)^\sharp H^{p'}\\
&=T_{z^{m-n}(s_+)^\sharp/(q_+)^\sharp}Q_{\kappa + n_0-m_0} (s_0)^\sharp H^{p'},
\end{align*}
again using that $1/(q_+)^\sharp$ is anti-analytic and $(s_+)^\sharp$ is analytic. This gives the general formula for $\Ran(T_\om^*)$. In case $\kappa + n_0-m_0\leq 0$, we have $Q_{\kappa + n_0-m_0}=I$ and $T_{(s_+)^\sharp}Q_{\kappa + n_0-m_0} (s_0)^\sharp= (s_+s_0)^\sharp$, as claimed.
%

\section{Symmetric operators and selfadjoint extensions}\label{S:symmetric}

For $\om \in\Rat$, the  second adjoint $T_\om^{**}$ is well-defined and $T_\om^{**} = T_\om$, since $T_\om$ is a closed, densely defined operator on a reflexive Banach space \cite[Theorem III.5.24]{K95}. Now consider $\om\in \Rat(\BT)$ and $p=2$. From Theorem \ref{T:main1} it is obvious that $T_{\om}\neq T_{\om}^*$, except in the degenerate case where $q$ is constant, since $\Dom(T_\om)= q H^2 +\cP_{\deg(q)-1}$ contains all polynomials while $\Dom(T_\om^*)= q^\sharp H^2$ only contains the polynomials that contain $q^\sharp$ as a factor. Consequently, $T_{\om}$ cannot be selfadjoint. In this section we consider the question when $T_\om^*$ is symmetric, and, if this is the case, when does $T_\om^*$ have a selfadjoint extension $L$. The first topic is addressed in the following theorem.

\begin{theorem}\label{T:symm}
Let $\om=s/q\in\Rat(\BT)$ with $s,q\in\cP$ co-prime. Set $n=\deg(s)$ and $m=\deg(q)$. Then the following are equivalent.
\begin{itemize}
  \item[(1)] $T_\om^*$ is symmetric;

  \item[(2)] $\om(\BT)\subset\BR$;

  \item[(3)] $\om(z)=\wtil{\om}(-i\frac{z+1}{z-1})$ with $\wtil{\om}$ a real rational function with poles only on $\BR$;

  \item[(4)] the essential spectrum $\si_\tu{ess}(T_\om)$ of $T_\om$ is contained in $\BR$;

  \item[(5)] $\om$ is proper,  $s=z^{m-n} \wtil{s}$ with $\wtil{s}$ self-inversive and $q_0 \overline{s_n}=\overline{q_m} s_{m-n}$ holds, where $s(z)=\sum_{k=0}^{n} s_k z^k$ and $q(z)=\sum_{k=0}^{m} q_k z^k$.

\end{itemize}
Moreover, if $T_\om^*$ is symmetric, then $T_\om^*\subset T_\om$.
\end{theorem}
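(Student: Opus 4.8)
The plan is to treat condition (2), $\om(\BT)\subset\BR$, as the hub and to establish $(1)\Leftrightarrow(2)$, $(2)\Leftrightarrow(3)$, $(2)\Leftrightarrow(4)$ and $(2)\Leftrightarrow(5)$ separately, after first recording the reformulation that $\om(\BT)\subset\BR$ is the same as the identity $\om=\om^*$ of rational functions: since $\om^*(z)=\overline{\om(z)}$ on $\BT$, one has $\om=\om^*$ on $\BT$ exactly when $\om$ is real-valued there, and two elements of $\Rat$ agreeing on the infinite set $\BT$ agree identically. The ``moreover'' claim is then immediate: a symmetric operator $A$ satisfies $A\subset A^*$, and since $T_\om$ is closed and densely defined on the reflexive space $H^2$ we have $(T_\om^*)^*=T_\om^{**}=T_\om$; hence $T_\om^*$ symmetric forces $T_\om^*\subset(T_\om^*)^*=T_\om$.

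For $(1)\Leftrightarrow(2)$ I would compute the sesquilinear form of $T_\om^*$ directly on its domain. By Theorem~\ref{T:main1'}, $\Dom(T_\om^*)=q^\sharp H^2$ and $T_\om^*(q^\sharp v)=T_{z^{m-n}}s^\sharp v$ for $v\in H^2$. Parametrising $g=q^\sharp v$ and $h=q^\sharp w$ with $v,w\in H^2$ arbitrary, and using that the Riesz projection $\BP$ is selfadjoint on $L^2$ while $q^\sharp v,q^\sharp w\in H^2$, symmetry of $T_\om^*$ becomes equivalent to
\[
\inn{z^{m-n}s^\sharp v}{q^\sharp w}=\inn{q^\sharp v}{z^{m-n}s^\sharp w}\qquad(v,w\in H^2).
\]
Substituting the boundary identities $\overline{q^\sharp}=z^{-m}q$ and $\overline{s^\sharp}=z^{-n}s$, which hold on $\BT$, both integrands collapse and the displayed equality reduces to $\int_\BT(\overline q\,s-\overline s\,q)\,\overline w\,v\,dz=0$ for all $v,w\in H^2$. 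Writing $\psi:=\overline q\,s-\overline s\,q=|q|^2(\om-\overline\om)$ on $\BT$, a Fourier-coefficient argument (testing with $v=z^a$, $w=z^b$, whose exponents $b-a-1$ exhaust $\BZ$) shows the form vanishes iff $\psi\equiv0$, i.e.\ iff $\om$ is real on $\BT$. The key point is that absorbing $\BP$ by selfadjointness before any case split makes this work uniformly in the cases $m\ge n$ and $m<n$.

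For $(2)\Leftrightarrow(3)$ I would use the Cayley transform $\phi(z)=-i\frac{z+1}{z-1}$, which maps $\BT$ onto $\BR\cup\{\infty\}$ (a one-line check gives $\overline{\phi(z)}=\phi(z)$ for $z\in\BT$) with inverse $\lambda\mapsto\frac{\lambda-i}{\lambda+i}$ sending $\BR$ into $\BT$. Setting $\wtil\om(\lambda)=\om\!\left(\frac{\lambda-i}{\lambda+i}\right)$ produces a rational function whose finite poles lie on $\BR$ precisely because the poles of $\om$ lie on $\BT$; and a rational function is real on $\BR$ iff it has real coefficients, which converts $\om(\BT)\subset\BR$ into ``$\wtil\om$ real''. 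For $(2)\Leftrightarrow(4)$ I would invoke the description of the essential spectrum of $T_\om$ obtained in \cite{GtHJR2}, by which $\si_\tu{ess}(T_\om)$ is the closure of the values $\om(z)$ taken at the regular boundary points ($z\in\BT$ with $q(z)\neq0$); as the poles are isolated, this set lies in $\BR$ iff $\om$ is real at every regular point of $\BT$, i.e.\ iff (2) holds.

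The equivalence $(2)\Leftrightarrow(5)$ is the computational core and is where I expect the main bookkeeping obstacle. Since $q$ has all its roots on $\BT$ it is self-inversive, with $q^\sharp=\overline{\gamma_q}\,q$ where $\gamma_q=q_0/\overline{q_m}$; substituting this into $\om=\om^*=z^{m-n}s^\sharp/q^\sharp$ and cancelling $q$ turns (2) into the single polynomial identity $s=\gamma_q\,z^{m-n}s^\sharp$. Comparing degrees and values at $0$ (where $s^\sharp(0)=\overline{s_n}\neq0$) forces $m\ge n$, so $\om$ is proper, and shows $0$ is a root of $s$ of order exactly $m-n$; writing $s=z^{m-n}\wtil s$ with $\wtil s(0)\neq0$ and verifying $s^\sharp=\wtil s^\sharp$, the identity becomes $\wtil s=\gamma_q\,\wtil s^\sharp$, i.e.\ $\wtil s$ is self-inversive. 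Finally the self-inversive constant of $\wtil s$ equals $\wtil s(0)/\overline{s_n}=s_{m-n}/\overline{s_n}$, and matching it with $\gamma_q=q_0/\overline{q_m}$ yields exactly $q_0\overline{s_n}=\overline{q_m}s_{m-n}$; every step is reversible, so (5) is equivalent to (2). The delicate points are tracking the unimodular constant $\gamma_q$ through the $\sharp$-operation and pinning down the order of the root at $0$, which is where sign and index errors are most likely to intrude.
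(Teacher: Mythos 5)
Your proposal is correct, and most of it runs parallel to the paper: the hub structure with (2) at the center, the Cayley-transform argument for $(2)\Leftrightarrow(3)$, the use of $\si_\tu{ess}(T_\om)=\om(\BT)$ from \cite{GtHJR2} for $(2)\Leftrightarrow(4)$, the reduction of (2) to the polynomial identity $s=\ga z^{m-n}s^\sharp$ followed by the degree/root-at-zero bookkeeping for $(2)\Leftrightarrow(5)$, and the biduality argument $T_\om^*\subset (T_\om^*)^*=T_\om^{**}=T_\om$ for the ``moreover'' claim are all essentially the paper's steps. Where you genuinely diverge is $(1)\Leftrightarrow(2)$. The paper argues at the operator level: since $q$ has all roots on $\BT$ it is self-inversive, so $qH^2=q^\sharp H^2$, whence $T_\om^*=T_{\om^*}|_{q^\sharp H^2}$ and $T_\om|_{qH^2}$ have the same domain; if $\om=\om^*$ these operators coincide and $T_\om^*\subset T_\om=(T_\om^*)^*$ gives symmetry, while conversely symmetry forces $T_\om^*=T_\om|_{qH^2}$, and applying both sides to the \emph{single} test element $q$ yields $\om^* q=\om q$, hence $\om=\om^*$. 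You instead compute the sesquilinear form on all of $\Dom(T_\om^*)\times\Dom(T_\om^*)$, absorb the Riesz projection by its $L^2$-selfadjointness, and reduce symmetry to the vanishing of all Fourier coefficients of $\psi=\ov{q}s-\ov{s}q$. Both are valid; your form computation is more work but is self-contained and handles $m\geq n$ and $m<n$ uniformly (the paper's one-line chain $\om^*q=T_{\om^*}q$ needs a small extra remark when $\om$ is not proper, since then $\om^*q\notin H^2$ and only $\BP(\om^*q)$ is produced, though this case is anyway excluded by a degree count), whereas the paper's argument is shorter and makes transparent exactly why the single function $q$ suffices as a test element.
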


\begin{proof}[\bf Proof]
We first prove the equivalence of (1) and (2), and that (1) implies $T_\om^*\subset T_\om$. Assume (2). Then, for $z\in\BT$, not a root of $q$, we have $\om^*(z)=\overline{\om(z)}=\om(z)$. Hence $\om^*=\om$. Since $q$ has only roots on $\BT$, we have $q=\gamma q^\sharp$ for a unimodular constant $\ga$. Hence $q H^2=q^\sharp H^2$. This shows $T^*_{\om}=T_{\om^*}|_{q^\sharp H^2}=T_{\om}|_{q H^2}\subset T_\om$. Since $(T_\om^{*})^*=T_\om$, it follows that $T_\om^*$ is symmetric and $T_\om^*\subset T_\om$. Conversely, assume (1). Then we still have $q H^2=q^\sharp H^2$ and $T_{\om}^*\subset (T_\om^*)^*=T_\om$. Hence $T_\om^*=T_\om|_{q H^2}$. In particular, we have $\om^* q=T_{\om^*} q=T_\om^* q=T_\om q= \om q$. This implies $\om =\om^*$. Hence $\om(z)=\overline{\om(z)}$ for $z\in\BT$, not a root of $q$. Thus $\om(\BT)\subset \BR$.

That (2) and (3) are equivalent follows simply because in (3) $\om$ is the composition of $\wtil{\om}$ and the inverse Cayley transform, which maps the circle $\BT$ bijectively onto $\BR$. The fact that $\wtil{\om}$ is real rational, i.e., $\wtil{\om}=\wtil{s}/\wtil{q}$ with $\wtil{s}$ and $\wtil{q}$ real polynomials, is equivalent to $\wtil{\om}(\BR):=\{\wtil{\om}(t) \colon t\in\BR,\ \wtil{q}(t)\neq 0\}\subset \BR$. Also, the equivalence of (2) and (4) is a direct consequence of the fact that $\si_\tu{ess}(T_\om)=\om(\BT)$, by \cite[Theorem 1.1]{GtHJR2}.

Finally, we prove (2) $\Leftrightarrow$ (5). Since $q=\ga q^\sharp$, we have
\[
\om^*=z^{m-n} \frac{s^\sharp}{q^\sharp}= z^{m-n} \ga \frac{s^\sharp}{q}.
\]
Thus, we have $\om=\om^*$ if and only if $z^{m-n} \ga s^\sharp= s$. Hence (2) is equivalent to $z^{m-n}\ga s^\sharp= s$. Now assume (2). Since $\deg(s^\sharp)\leq \deg(s)$, the identity $z^{m-n}\ga s^\sharp= s$ can only occur if $m\geq n$, i.e., if $\om$ is proper. The identity also shows that $s= z^{m-n} \wtil{s}$ for $\wtil{s}=\ga s^\sharp$. On the other hand, $s^\sharp=(z^{m-n} \wtil{s})^\sharp= \wtil{s}^\sharp$. Thus $ \wtil{s}= \ga s^\sharp= \ga \wtil{s}^\sharp$, which shows $\wtil{s}$ is self-inversive, with constant $\ga$. Note that $\ga=q_0/\overline{q_m}$. Also, we have $s_0=\cdots=s_{m-n-1}=0$ and  $\wtil{s}(z)=\sum_{k=0}^{2n-m} s_{m-n+k} z^k$. Since $\wtil{s}$ is self-inversive, $\wtil{s}=\de \wtil{s}^\sharp$ with $\de=s_{m-n}/\overline{s_n}$. But also $\de=\ga$, so $s_{m-n}/\overline{s_n}=q_0/\overline{q_m}$. Thus $q_0 \overline{s_n}=\overline{q_m} s_{m-n}$. Hence (5) holds. Conversely, assume (5). Reversing the above argument, it follows that $q_0 \overline{s_n}=\overline{q_m} s_{m-n}$ implies $\wtil{s}=\de \wtil{s}^\sharp$ with $\de=\ga$. Thus $\ga s^\sharp=\ga \wtil{s}^\sharp = \wtil{s}$. This implies $ s =z^{m-n} \wtil{s}= z^{m-n} \ga s^\sharp$, and hence (2).
\end{proof}

\begin{corollary}\label{C:degcons}
Let $\om=s/q\in\Rat(\BT)$ with $s,q\in\cP$ co-prime.Assume $T_\om^*$ is symmetric. Then $\deg(s)\leq \deg(q)\leq 2\deg(s)$.
\end{corollary}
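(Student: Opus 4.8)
The plan is to extract the degree inequality directly from condition (5) of Theorem~\ref{T:symm}, which is available to us since $T_\om^*$ being symmetric is equivalent to (5). The statement of (5) already tells us that $\om$ is proper, which immediately gives $\deg(s)\le\deg(q)$, i.e.\ $n\le m$. So the real content is the upper bound $m\le 2n$, and I would read this off from the structural information in (5).

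First I would recall from (5) that $s=z^{m-n}\wtil{s}$ with $\wtil{s}$ self-inversive. Since $\wtil{s}$ is self-inversive it is in particular nonzero and satisfies $\wtil{s}=\ga\wtil{s}^\sharp$ for a unimodular constant $\ga$; a self-inversive polynomial has $\deg(\wtil{s}^\sharp)=\deg(\wtil{s})$, which forces $\wtil{s}(0)\neq 0$ (the passage $r\mapsto r^\sharp$ drops the degree by the multiplicity of $0$ as a root, as noted in Section~\ref{S:Tom*}, so equality of degrees means $0$ is not a root). Thus $\wtil{s}$ contributes a nonzero constant term, and the factor $z^{m-n}$ accounts for exactly the multiplicity of $0$ as a root of $s$.

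The key computation is then a degree count: from $s=z^{m-n}\wtil{s}$ we get $n=\deg(s)=(m-n)+\deg(\wtil{s})$, so $\deg(\wtil{s})=2n-m$. Since $\wtil{s}$ is a (nonzero) polynomial, its degree is nonnegative, giving $2n-m\ge 0$, i.e.\ $m\le 2n$. Combining with the properness bound $n\le m$ yields $\deg(s)\le\deg(q)\le 2\deg(s)$, as required.

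I do not anticipate a genuine obstacle here: the corollary is essentially a bookkeeping consequence of the explicit factorization recorded in Theorem~\ref{T:symm}(5), and the only point requiring a moment's care is justifying that $\deg(\wtil{s})=2n-m$ is a legitimate (nonnegative) degree rather than a vacuous expression — which is handled by observing that $\wtil{s}\neq 0$ because it is self-inversive. Everything reduces to matching the exponent $m-n$ of the $z$-factor against the total degree $n$ of $s$.
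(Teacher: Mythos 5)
Your proof is correct and follows essentially the same route as the paper: both extract the factorization $s=z^{m-n}\wtil{s}$ from Theorem \ref{T:symm}(5), use that a self-inversive $\wtil{s}$ has $\wtil{s}(0)\neq 0$ to justify $m\geq n$, and then compare degrees to get $\deg(\wtil{s})=2n-m\geq 0$, i.e.\ $m\leq 2n$. The only cosmetic difference is that you read properness directly off condition (5), whereas the paper rederives $m\geq n$ from the factorization via a non-removable-singularity argument; both are valid.
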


\begin{proof}[\bf Proof]
By Theorem \ref{T:symm} condition (5) holds with $m=\deg(q)$ and $n=\deg(s)$. Since $\wtil{s}$ is self-inversive, we have $\wtil{s}(0)\neq 0$. Consequently, 0 would be a non-removable singularity of $s=z^{m-n}\wtil{s}$ in case $m<n$, which gives a contradiction. Hence $m\geq n$. Furthermore, comparing the degrees on both sides of $s=z^{m-n}\wtil{s}$ yields, $n=m-n+\deg(\wtil{s})\geq m-n$. Hence $m\leq 2n$.
\end{proof}

When $T_{\om}^*$ is symmetric, it need not be the case that $T_{\om}^*$ has a selfadjoint extension. In Proposition \ref{P:selfadjext} below we characterize when $T_{\om}^*$ does have a selfadjoint extension. However, we first give a concrete example that shows this does not always happen.

\begin{example}\label{E:symmetric1}
In \cite{Helson} Helson considered the functions $\om_k(z)=\left(-i\frac{z+1}{z-1}\right)^k$ for $k\in \BN$. For all  $k$ we have $\om_k(\BT)\subset \BR$, see Theorem \ref{T:symm} (3) above, hence $T_{\om_k}^*$ is symmetric by Theorem \ref{T:symm}. In fact, for $k$ even $\om_k(\BT)=\BR_+$, while for $k$ odd we have $\om_k(\BT)=\BR$. We show that $T_{\om_k}^*$ does not have a selfadjoint extension for $k=1$. In Example \ref{E:symmetric2} we return to this example for general $k$.

For $k=1$ we have $\om(z)=\om_1(z) = -i\frac{z+1}{z-1}$. Hence $\Dom(T_\om) = (z-1)H^2 + \BC$ and $\Dom(T_\om^*) = (z-1)H^2$. Suppose $T_\om^*$ has a selfadjoint extension $L$. Then $L=L^*$ and thus $T_{\om}^*\subset L=L^* \subset T_{\om}^{**}=T_\om$. Since $T_\om$ is not selfadjoint, the inclusions are strict. Hence $\Dom(T_\om^*)\subset \Dom(L)\subset \Dom(T_\om)$, with strict inclusions. However, the complement of $\Dom(T_\om^*)$ in $\Dom(T_\om)$ is one-dimensional, hence not both inclusions can be strict. Thus $T_\om$ does not admit a selfadjoint extension.
\end{example}

\begin{proposition}\label{P:selfadjext}
Let $\om=s/q\in\Rat(\BT)$, with $s,q\in\cP$ coprime, be such that $T_\om^*$ is symmetric. Then $T_\om^*$ admits a selfadjoint extension if and only if the number of roots of $s-iq$ and $s+iq$ in $\BD$, counting multiplicities, coincide.
\end{proposition}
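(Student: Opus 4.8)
The plan is to invoke von Neumann's criterion: a densely defined symmetric operator on a Hilbert space admits a selfadjoint extension if and only if its two deficiency indices $n_\pm = \dim\kernel(A^*\mp iI)$ are equal. Here $A=T_\om^*$, which is symmetric and densely defined (its domain $q^\sharp H^2$ is dense since $q^\sharp$ has no roots in $\BD$), and since $p=2$ the space $H^2$ is a Hilbert space. The first step is to identify $A^* = (T_\om^*)^* = T_\om^{**} = T_\om$, using that $T_\om$ is closed and densely defined on the reflexive space $H^2$, as recorded at the start of this section.

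Next I would compute the deficiency spaces. Adding a scalar multiple of the identity only perturbs the bounded part of the symbol, so $T_\om\mp iI = T_{\om\mp i} = T_{(s\mp iq)/q}$; this follows directly from the domain description \eqref{Tom}, since $\mp i g\in H^2$ for every $g$, leaving $\Dom(T_\om)$ unchanged and shifting the action by $\mp i g$. Because $T_\om^*$ is symmetric, Theorem \ref{T:symm} gives $\om(\BT)\subset\BR$, and I would use this to check that $(s\mp iq)/q\in\Rat(\BT)$: the denominator $q$ has roots only on $\BT$; the numerator $s\mp iq$ is coprime to $q$ (a common root would force $q$ and $s$ to share a root); and $s\mp iq$ has no roots on $\BT$, since at a point of $\BT$ where $q$ vanishes one has $s\mp iq = s\neq 0$, while at the remaining points of $\BT$ a root would force $\om=\pm i\notin\BR$.

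With $(s\mp iq)/q\in\Rat(\BT)$ established, I would apply the kernel formula \eqref{DomRanId} to this symbol. Writing the numerator $s\mp iq$ in its factored form, the on-$\BT$ factor is trivial by the previous step, so the formula yields $\dim\kernel T_{(s\mp iq)/q} = m - \#\{\textrm{roots of } s\mp iq \textrm{ in } \BD\}$ with $m=\deg(q)$; the maximum with $0$ is never needed, since the number of roots in $\BD$ is at most $\deg(s\mp iq)\leq m$, using that $\om$ is proper by Corollary \ref{C:degcons}. Consequently $n_+ = m - \#\{\textrm{roots of } s-iq \textrm{ in } \BD\}$ and $n_- = m - \#\{\textrm{roots of } s+iq \textrm{ in } \BD\}$, so $n_+ = n_-$ if and only if $s-iq$ and $s+iq$ have the same number of roots in $\BD$, counting multiplicities, which is the asserted criterion.

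I expect the main obstacle to be bookkeeping rather than anything conceptual: verifying carefully that $s\mp iq$ has no roots on $\BT$, so that the kernel formula applies cleanly with trivial on-$\BT$ factor. The degree-deficiency subtlety (when $\deg(s)=\deg(q)$ the leading coefficient $s_m\mp iq_m$ could vanish for one choice of sign) is automatically absorbed here, because the deficiency index is expressed directly in terms of roots in $\BD$ against the \emph{fixed} constant $m=\deg(q)$, rather than in terms of roots outside $\ov{\BD}$; this is the key simplification that makes the two indices directly comparable and avoids any reflection argument across $\BT$.
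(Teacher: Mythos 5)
Your proposal is correct and follows essentially the same route as the paper: von Neumann's deficiency-index criterion applied to $A=T_\om^*$ with $A^*=T_\om^{**}=T_\om$, the identification $T_\om\mp i=T_{(s\mp iq)/q}$ with $(s\mp iq)/q\in\Rat(\BT)$ having numerator free of roots on $\BT$, and the kernel-dimension formula for such symbols (the paper cites Corollary 4.2 of \cite{GtHJR1} where you use \eqref{DomRanId}, but these give the same count). Your careful verification that $s\mp iq$ is coprime to $q$ and has no zeroes on $\BT$, and that properness makes the $\max$ with $0$ superfluous, matches the paper's reasoning.
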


\begin{proof}[\bf Proof]
The operator $T_\om^*$ is an adjoint, and hence closed, and by assumption symmetric. Following definition X.2.12 from \cite{Conway} we define the deficiency subspaces of $T_\om^*$ as the spaces
\[
\mathcal{L}_+ ={\rm Ker\, } (T_\om^{**}-i)=({\rm Ran\, }(T_\om^*+i))^\perp,
\quad
\mathcal{L}_- ={\rm Ker\, } (T_\om^{**}+i)=({\rm Ran\, }(T_\om^*-i))^\perp,
\]
and the deficiency indices as the integers $n_{\pm}={\rm dim\, }\mathcal{L}_{\pm}$. Since $T_{\om}^{**}=T_\om$, we have
\[
n_+={\rm dim\, }{\rm Ker\, } (T_\om-i)
\ands
n_-={\rm dim\, }{\rm Ker\, } (T_\om+i).
\]
Also, we have $T_\om\pm i= T_{\om \pm i}$. By item (b) of Theorem X.2.20 in \cite{Conway}, $T_\om$ has a selfadjoint extension if and only if $n_+=n_-$. Note that $\om \pm i = (s\pm i q)/q$. We now apply Corollary 4.2 from \cite{GtHJR1} to $T_{\om\pm i}$, to obtain that $n_\pm$ is equal to the maximum of 0 and the difference of $m$ and the number of roots of $s\pm i q$ in $\overline{\BD}$, counting multiplicities. However, since $T_\om^*$ is symmetric, $\om$ is proper so the number of roots cannot exceed $m$. Note also that $\om(\BT)\subset \BR$, so $s\pm i q$ cannot have roots on $\BT$. It thus follows that $T_\om^*$ has a selfadjoint extension if and only if the number of roots in $\BD$ of
$s-iq$ and $s+iq$, counting multiplicities, coincide, as claimed.
\end{proof}

Since $T_\om^*$ is never selfadjoint for $\om\in\Rat(\BT)$ having at least one pole on $\BT$, the formulas for $n_\pm$ in the above proof along with item (a) of Theorem X.2.20 in \cite{Conway} directly give the following corollary.

\begin{corollary}\label{C:nonselfad}
Let $\om=s/q\in\Rat(\BT)$, with $s,q\in\cP$ coprime, be such that $T_\om^*$ is symmetric. Then $s+i q$ or $s-iq$ must have a root in $\BD$.
\end{corollary}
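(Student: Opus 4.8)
The plan is to derive Corollary \ref{C:nonselfad} directly from the machinery assembled in the proof of Proposition \ref{P:selfadjext}, rather than starting afresh. The key observation I would isolate first is the explicit description of the deficiency indices established there: under the standing hypothesis that $T_\om^*$ is symmetric (equivalently $\om(\BT)\subset\BR$ and $\om$ proper, by Theorem \ref{T:symm}), one has
\[
n_\pm = m - \#\{\text{roots of } s\pm iq \text{ in } \overline{\BD},\ \text{multiplicities counted}\},
\]
where $m=\deg(q)$, together with the fact that $s\pm iq$ has no roots on $\BT$ so that ``roots in $\overline{\BD}$'' and ``roots in $\BD$'' coincide. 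The corollary is exactly the contrapositive of the assertion that $T_\om^*$ \emph{fails} to have a selfadjoint extension whenever neither $s+iq$ nor $s-iq$ has a root in $\BD$.

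First I would invoke the hypothesis that $\om\in\Rat(\BT)$ has at least one pole on $\BT$; this is the content of the sentence preceding the corollary and is what makes $T_\om^*$ never selfadjoint. More precisely, since $q$ has a root on $\BT$ we have $m=\deg(q)\geq 1$, and the proof of Proposition \ref{P:selfadjext} (via Corollary 4.2 of \cite{GtHJR1}) shows that $T_\om^*$ is selfadjoint precisely when $n_+=n_-=0$. Now I would argue by contradiction: suppose neither $s+iq$ nor $s-iq$ has a root in $\BD$. Because neither polynomial has a root on $\BT$ either, both would have all their roots strictly outside $\overline{\BD}$, so the counts of roots in $\overline{\BD}$ would both be $0$, forcing $n_+=n_-=m\geq 1$. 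This contradicts nothing about selfadjointness directly, so the sharper point must be extracted differently.

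The cleaner route, which I expect to be the intended one, uses item (a) of Theorem X.2.20 in \cite{Conway}, which states that a closed symmetric operator is itself selfadjoint if and only if both deficiency indices vanish, i.e.\ $n_+=n_-=0$. Since $T_\om^*$ is never selfadjoint in the present setting (a pole of $\om$ lies on $\BT$, so by Theorem \ref{T:main1} the domain inclusion $\Dom(T_\om^*)=q^\sharp H^2\subsetneq q H^2+\cP_{m-1}=\Dom(T_\om)=\Dom(T_\om^{**})$ is proper), at least one of $n_+,n_-$ must be strictly positive. Translating through the displayed formula $n_\pm = m - \#\{\text{roots of } s\pm iq \text{ in } \BD\}$, a strictly positive $n_+$ or $n_-$ means the corresponding root count is strictly less than $m$; but this alone does not yet force a root in $\BD$. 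The decisive step is therefore to combine the two indices: I would note that the total number of roots of $(s+iq)(s-iq)$, counted with multiplicity, is $2\max\{m,n\}=2m$ (using that $\om$ is proper, so $n\le m$, and in fact $\deg(s\pm iq)=m$ since the leading term comes from $iq$), whence $n_+ + n_- = 2m - \#\{\text{roots of } s+iq \text{ in } \BD\} - \#\{\text{roots of } s-iq \text{ in } \BD\}$. If both $s+iq$ and $s-iq$ had no root in $\BD$, then $n_+=n_-=m$, and since $m\ge 1$ this is consistent with non-selfadjointness but contradicts the possibility of a selfadjoint \emph{extension} only when $n_+\ne n_-$.

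The main obstacle, then, is that the naive contrapositive does not close, and the real content is a counting identity forcing one root inside $\BD$. I would resolve it by observing that $s-iq$ and $s+iq$ are related by $s+iq=\overline{(s-iq)^\sharp}$ up to a unimodular constant (since $q$ is self-inversive and $\om(\BT)\subset\BR$ forces $s$ to be self-inversive up to the $z^{m-n}$ factor, by condition (5) of Theorem \ref{T:symm}), so that the roots of $s+iq$ are the reflections $\alpha\mapsto 1/\overline{\alpha}$ of the roots of $s-iq$ across $\BT$. Consequently a root of $s-iq$ inside $\BD$ corresponds to a root of $s+iq$ outside $\overline{\BD}$ and vice versa, and since neither polynomial vanishes on $\BT$, the $2m$ roots of the product split evenly: exactly $m$ lie in $\BD$ and $m$ lie outside. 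Hence $\#\{\text{roots of }s-iq\text{ in }\BD\}+\#\{\text{roots of }s+iq\text{ in }\BD\}=m\ge 1$, so at least one of the two polynomials must have a root in $\BD$, which is the claim. I would present the symmetry-of-roots argument carefully, as it is the one nonroutine ingredient; everything else is bookkeeping with the formulas already in hand.
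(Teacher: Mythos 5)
Your overall strategy --- reducing the claim to the counting identity
\[
\#\{\text{roots of }s+iq\text{ in }\BD\}+\#\{\text{roots of }s-iq\text{ in }\BD\}=\deg(q)\geq 1
\]
via reflection of roots across $\BT$ --- is genuinely different from the paper's own argument, which is a one-line appeal to the non-selfadjointness of $T_\om^*$ combined with the deficiency-index formulas from the proof of Proposition \ref{P:selfadjext} and item (a) of Theorem X.2.20 of \cite{Conway}. Your route is essentially the content of Lemma \ref{L:ids} and the identities \eqref{ks-ids}, which the paper only develops afterwards for Proposition \ref{P:selfadjext3}. You are also right that the naive contrapositive through the deficiency indices does not close on its own (non-selfadjointness only says that not both of $s\pm iq$ can have \emph{all} $m$ roots in $\overline{\BD}$, which is not the assertion of the corollary), and your explicit diagnosis of that dead end is accurate and worth keeping.

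However, the execution of the counting step contains a genuine error. You assert that $\deg(s\pm iq)=m$ ``since the leading term comes from $iq$,'' and hence that $(s+iq)(s-iq)$ has $2m$ roots splitting evenly across $\BT$. This is false in general: symmetry of $T_\om^*$ only forces $n\leq m\leq 2n$ (Corollary \ref{C:degcons}), so the case $n=m$ occurs, and there the leading coefficients $s_m\pm iq_m$ cancel for one choice of sign --- precisely when $\om(0)\in\{i,-i\}$, as Lemma \ref{L:ids} records. Example \ref{E:nonselfadjeven} is a concrete instance: with $s=i(1+az+z^2)$ and $q=1-z^2$ one has $s+iq=i(2+az)$ of degree $1<m=2$, the product has only three roots, and the reflection correspondence you invoke is not a bijection between the two root sets. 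The correct relation is $s\pm iq=\ga z^{l_{\mp}}(s\mp iq)^\sharp$ with $l_{\pm}=m-\deg(s\pm iq)$, and one must track both the degree drop and the resulting root at the origin; doing so, as in \eqref{ks-ids}, still yields that the two root counts in $\BD$ sum to $m$, so your conclusion survives, but as written the step is unjustified in exactly the degenerate case the paper takes care to handle. Supplying Lemma \ref{L:ids} (or treating the case $\om(0)=\pm i$ separately) closes the gap.
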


Proposition \ref{P:selfadjext} can be rephrased in terms of the index of the operators $T_{\om\pm i}$.

\begin{proposition}\label{P:selfadjext2}
Let $\om=s/q\in\Rat(\BT)$, with $s,q\in\cP$ coprime, be such that $T_\om^*$ is symmetric. Then $T_{\om +i}$ and $T_{\om- i}$ are both Fredholm and $T_\om^*$ admits a selfadjoint extension if and only if the Fredholm indices of $T_{\om +i}$ and $T_{\om- i}$ coincide.
\end{proposition}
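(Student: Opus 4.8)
The plan is to reduce the statement entirely to Proposition \ref{P:selfadjext} by computing the two Fredholm indices explicitly and matching them against the root count appearing there.

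First I would record the structural facts that let the $\Rat(\BT)$-machinery apply to $\om\pm i$. Writing $\om\pm i=(s\pm iq)/q$, any common factor of $s\pm iq$ and $q$ divides $(s\pm iq)\mp iq=s$, hence divides $\gcd(s,q)=1$; so $s\pm iq$ and $q$ are co-prime. Since all roots of $q$ lie on $\BT$, this exhibits $\om\pm i$ as an element of $\Rat(\BT)$ with denominator $q$ of degree $m$ (and $\om$, hence $\om\pm i$, is proper by Corollary \ref{C:degcons}). Because $T_\om^*$ is symmetric, Theorem \ref{T:symm} gives $\om(\BT)\subset\BR$, so $\om(z)\pm i$ has nonzero imaginary part at every $z\in\BT$ that is not a pole; equivalently $s\pm iq$ has no roots on $\BT$. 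Thus $\om\pm i$ has no zeroes on $\BT$, and the Fredholmness of $T_{\om+i}$ and $T_{\om-i}$ follows from the Fredholm characterization of \cite{GtHJR1} (equivalently, from $\pm i\notin\BR\supseteq\om(\BT)=\si_\tu{ess}(T_\om)$, using \cite[Theorem~1.1]{GtHJR2} and $T_\om\pm i=T_{\om\pm i}$).

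Next I would compute the index directly from the descriptions available in the excerpt. For a Fredholm $T_\psi$ with $\psi=a/b\in\Rat(\BT)$, $a,b$ co-prime, $\deg b=m'$, and $a$ having no roots on $\BT$, formula \eqref{DomRanId} gives $\dim\kernel(T_\psi)=\max\{0,m'-N\}$ (the factor $a_0$ is constant, so $\deg(a_-a_0)=N$), while \eqref{tilQ}, together with the fact that a Fredholm operator has closed range, gives $\codim\Ran(T_\psi)=\max\{0,N-m'\}$, where $N$ denotes the number of roots of $a$ in $\BD$. Subtracting yields $\Index(T_\psi)=m'-N$. Applying this with $a=s\pm iq$ and $b=q$, and using that the roots in $\BD$ and in $\overline{\BD}$ agree since $s\pm iq$ has none on $\BT$, I obtain
\[
\Index(T_{\om\pm i})=m-N_\pm,\qquad N_\pm:=\#\{\text{roots of }s\pm iq\text{ in }\BD\}.
\]

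Finally, $\Index(T_{\om+i})=\Index(T_{\om-i})$ holds precisely when $N_+=N_-$, that is, when $s+iq$ and $s-iq$ have the same number of roots in $\BD$ (multiplicities counted); by Proposition \ref{P:selfadjext} this is exactly the condition for $T_\om^*$ to admit a selfadjoint extension, completing the argument. I expect the only genuinely technical step to be the index computation, where one must read the two root counts off \eqref{DomRanId} and \eqref{tilQ} correctly and verify closedness of the range; note that the precise sign convention adopted for $\Index$ is immaterial, since only the \emph{equality} of the two indices enters the conclusion.
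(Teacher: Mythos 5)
Your proposal is correct and follows essentially the same route as the paper: the paper's proof is a one-line reduction to Proposition \ref{P:selfadjext} via the index formula $\Index(T_{\om\pm i})=m-N_\pm$, which it cites from Theorem 1.1 of \cite{GtHJR1} applied to $\om\pm i=(s\pm iq)/q$. The only difference is that you re-derive that index formula from the kernel and range-complement descriptions \eqref{DomRanId} and \eqref{tilQ} instead of citing it, which makes the argument more self-contained but does not change the underlying idea.
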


\begin{proof}[\bf Proof]
This follows directly from Proposition \ref{P:selfadjext} and Theorem 1.1 of \cite{GtHJR1} applied to $\om+ i$ and $\om - i$, using that $\om\pm i=(s\pm i q)/q$.
\end{proof}

\begin{corollary}\label{C:notR}
Let $\om=s/q\in\Rat(\BT)$, with $s,q\in\cP$ coprime, be such that $T_\om^*$ is symmetric. Assume $\om(\BT)\neq \BR$. Then $T_\om^*$ admits a selfadjoint extension.
\end{corollary}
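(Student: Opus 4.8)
The plan is to reduce the statement to the root-counting criterion of Proposition \ref{P:selfadjext} and then to show that the relevant difference of root counts is a winding number which must vanish under the hypothesis $\om(\BT)\neq\BR$. By Proposition \ref{P:selfadjext} it suffices to prove that $s-iq$ and $s+iq$ have the same number of roots in $\BD$, counting multiplicities. Since $\om=s/q\in\Rat(\BT)$, the polynomial $q$ has all its roots on $\BT$ and hence none in $\BD$; moreover $s\pm iq$ is coprime with $q$, since a common root $z_0$ of $q$ and $s\pm iq$ would force $s(z_0)=0$, contradicting the coprimality of $s$ and $q$. Consequently the number of roots of $s-iq$ (resp.\ $s+iq$) in $\BD$ equals the number of zeros of $\om-i$ (resp.\ $\om+i$) in $\BD$, which I denote $a$ and $b$. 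The goal becomes $a=b$.

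The key device is the Cayley transform $F=(\om-i)/(\om+i)$ of $\om$. First I would record that $F$ is a rational function whose zeros in $\BD$ are exactly the zeros of $\om-i$ (there are $a$ of them) and whose poles in $\BD$ are exactly the zeros of $\om+i$ (there are $b$ of them); the functions $\om-i$ and $\om+i$ share no zeros, since their difference is the nonzero constant $-2i$. Because $T_\om^*$ is symmetric, Theorem \ref{T:symm} gives $\om(\BT)\subset\BR$, and since the Cayley map $M(t)=(t-i)/(t+i)$ sends the extended real line $\BR\cup\{\infty\}$ into $\BT$, it follows that $|F|=1$ on $\BT$; at a pole of $\om$ on $\BT$ one has $F\to 1$, so $F$ extends continuously across such points with modulus one. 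Thus $F|_\BT$ is a continuous map $\BT\to\BT$, having no zeros or poles on $\BT$, and the argument principle applied to $F$ on $\overline{\BD}$ gives that $a-b$ equals the winding number of $F|_\BT$ about $0$, i.e.\ the degree of $F|_\BT$ as a map of circles.

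To finish I would exploit $\om(\BT)\neq\BR$: choosing a finite real value $t_0\notin\om(\BT)$, the point $M(t_0)\in\BT$ is omitted by $F|_\BT$, so $F|_\BT$ is not surjective and is therefore null-homotopic, forcing its degree to be $0$. Hence $a=b$, and Proposition \ref{P:selfadjext} yields the desired selfadjoint extension. (This is consistent with Example \ref{E:symmetric1}, where $\om_1(\BT)=\BR$ and indeed $a\neq b$.) The hard part will be the bookkeeping at the poles of $\om$ on $\BT$: one must verify that $F$ extends continuously there with $|F|=1$, so that the argument principle applies to $F$ on $\overline{\BD}$ with no boundary zeros or poles, and that the omitted value $M(t_0)$ is genuinely missed along the whole of $\BT$ and not merely away from the poles. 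Both points are routine once it is observed that every pole of $\om$ lies on $\BT$ and that $\om$ takes only real values there.
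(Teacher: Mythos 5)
Your proof is correct, but it takes a genuinely different route from the paper's. The paper reduces the claim to Proposition \ref{P:selfadjext2} and invokes the constancy of the Fredholm index of $T_{\om-\la}$ on the connected components of $\BC\setminus\si_\tu{ess}(T_\om)=\BC\setminus\om(\BT)$ (Theorem 1.1 of \cite{GtHJR2}): since $\om(\BT)$ is a proper subset of $\BR$, the points $i$ and $-i$ lie in the same component, so $T_{\om+i}$ and $T_{\om-i}$ have equal index. You instead go through the root-counting criterion of Proposition \ref{P:selfadjext} and compute $a-b$ directly as the winding number about $0$ of the Cayley transform $F=(s-iq)/(s+iq)$ restricted to $\BT$; your verifications that $s\pm iq$ are coprime (so zeros and poles of $F$ in $\BD$ count exactly $a$ and $b$), that $F$ has unimodular boundary values with $F=1$ at the roots of $q$, and that a non-surjective circle map has degree zero are all sound. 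The two arguments are really two faces of the same fact -- the Fredholm index of $T_{\om-\la}$ is itself computed by a winding number -- but yours is self-contained complex analysis requiring only the argument principle, whereas the paper's is shorter because it reuses the already-established identification of the essential spectrum and the index stability theorem. Either is acceptable; yours has the minor advantage of not needing Proposition \ref{P:selfadjext2} at all.
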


\begin{proof}[\bf Proof]
The Fredholm index of $T_{\om -\la}$ is constant with respect to $\la\in\BC$ on the connected components of $\BC$ separated by the essential spectrum of $T_\om$, which is equal to $\om(\BT)$; see \cite[Theorem 1.1]{GtHJR2}. Hence if $\om(\BT)\neq \BR$, but $\om(\BT)\subset\BR$ since $T_\om^*$ is symmetric, then $i$ and $-i$ are in the same connected component and thus $T_{\om+i}$ and $T_{\om-i}$ have the same index. The conclusion now follows from Proposition \ref{P:selfadjext2}.
\end{proof}

\begin{example}\label{E:symmetric2}
We return to the functions $\om_k(z)=\left(-i\frac{z+1}{z-1}\right)^k$ considered in Example \ref{E:symmetric1}. Since $\om_k(\BT)=\BR_+$ for $k$ even, we obtain directly from Corollary \ref{C:notR} that $T_{\om_k}^*$ admits a selfadjoint extension in case $k$ is even.

For odd values of $k$ we have $\om_k(\BT)=\BR$, and thus no conclusion can be drawn from Corollary \ref{C:notR}. To deal with the odd case we resort to Proposition \ref{P:selfadjext}.
Take $s(z)=(-i)^k(z+1)^k$ and $q=(z-1)^k$ and write $k$ as $k=2l+1$. The polynomials
$s\pm iq$ are given by
\begin{align*}
s(z)\pm iq(z)&=i\left( (-1)^{l+1}(z+1)^{2l+1}\pm (z-1)^{2l+1}\right) \\
&=i\left(  (-1)^{l+1}\sum_{j=0}^{2l+1}\binom{2l+1}{j}z^j
\pm \sum_{j=0}^{2l+1} \binom{2l+1}{j}z^j (-1)^{2l+1-j}\right) \\
&=i\sum_{j=0}^{2l+1} \binom{2l+1}{j}z^j \left((-1)^{l+1}\pm(-1)^{2l+1-j}\right)\\
&=i \sum_{j=0}^{2l+1}\binom{2l+1}{j}z^j \left((-1)^{l+1}\pm(-1)^{j-1}\right).
\end{align*}
For odd values of $l$ one obtains:
\begin{align*}
s(z)-iq(z)&= -2i \left(
\binom{2l+1}{0} +\cdots + \binom{2l+1}{2l-2}z^{2l-2} + \binom{2l+1}{2l}z^{2l}
\right),\\
s(z)+iq(z)&= 2i \left(
\binom{2l+1}{1}z +\cdots + \binom{2l+1}{2l-1}z^{2l-1} + \binom{2l+1}{2l+1}z^{2l+1}
\right)\\
&= 2i z \left(
\binom{2l+1}{2l} +\cdots + \binom{2l+1}{2}z^{2-2} + \binom{2l+1}{0}z^{2l}
\right)
\end{align*}
Observe that $s+iq$ is of the form $izp_+(z^2)$ where $p_+$ is a real polynomial of degree $2l$ and that $s-ig$ is of the form $i p_-(z^2)$ where $p_-$ is a real polynomial of degree $2l$. Because $p_+$ and $p_-$ are real polynomials and the fact that $z^2$ is the variable rather than $z$ itself, the nonzero roots of $zp_+(z^2)$ come either in pairs ($z$ and $-z$) for real nonzero roots or in quadruples ($z, \bar{z}, -z, -\bar{z}$) for nonreal roots, while zero appears as a simple root. Similarly, the roots of $p_-(z^2)$ come in pairs ($z$ and $-z$) or quadruples ($z, \bar{z}, -z, -\bar{z}$) and there is no root at zero.  Hence $s+iq$ has an odd number of roots inside the unit disc, and $s-iq$ has an even number of roots inside the unit disc, so that the indices $n_+$ and $n_-$ can never coincide. One further observes that $p_-=p_+^\sharp$. In a similar way, for even values of $l$ the polynomial $s+iq$ will have an even number of roots inside the unit disc and $s-iq$ will have an odd number of roots inside the unit disc. Hence, in all cases where $k$ is odd, $T_\om^*$ does not have a selfadjoint extension.
\end{example}

We now present a proposition that rephrases the criteria of Proposition \ref{P:selfadjext} in terms of the roots of $s+iq$ (or $s-iq$) only. The observation that $T_{\om_k}^*$ in Example \ref{E:symmetric2} has no selfadjoint extension follows as a special case. In general, $T_\om^*$ cannot have a selfadjoint extension whenever $\deg(q)$ is odd for any $\om\in\Rat(\BT)$.

\begin{proposition}\label{P:selfadjext3}
Let $\om=s/q\in\Rat(\BT)$, with $s,q\in\cP$ coprime, be such that $T_\om^*$ is symmetric. Set $l_{\pm}=m-\deg(s\pm iq)$ and define
\[
k_{\pm,1}=\# \left\{\begin{array}{l}\!\!\!
 \textrm{zeroes of } \om\pm i \textrm{ inside }\BT \!\!\! \\
\!\!\!\textrm{multi. taken into account}\!\!\!
\end{array}\right\},\quad
k_{\pm,2}=\# \left\{\begin{array}{l}\!\!\!
 \textrm{zeroes of } \om\pm i \textrm{ outside }\BT \!\!\! \\
\!\!\!\textrm{multi. taken into account}\!\!\!
\end{array}\right\},
\]
Then
\[
\mbox{$T_\om^*$ has a selfadjoint extension}
\quad \Longleftrightarrow \quad
l_+ + k_{+,2}= k_{+,1}
\quad \Longleftrightarrow \quad
l_- + k_{-,2}= k_{-,1}.
\]
In particular, if $T_\om^*$ has a selfadjoint extension, then $\deg(q)$ must be even.
\end{proposition}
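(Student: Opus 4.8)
The plan is to reduce both asserted equivalences to the single numerical identity $N_+ + N_- = m$, where $m = \deg(q)$ and $N_\pm$ denotes the number of roots of $s\pm iq$ inside $\BD$, multiplicities taken into account. First I would record the elementary bookkeeping. Since $s$ and $q$ are co-prime, so are $s\pm iq$ and $q$, whence the zeroes of $\om\pm i = (s\pm iq)/q$ are exactly the roots of $s\pm iq$; in particular $k_{\pm,1}=N_\pm$. Because $T_\om^*$ is symmetric we have $\om(\BT)\subset\BR$ by Theorem \ref{T:symm}, so $\om\pm i$ has no zero on $\BT$, i.e.\ $s\pm iq$ has no root on $\BT$. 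Hence each of the $\deg(s\pm iq)=m-l_\pm$ roots lies either inside or outside $\BT$, giving $N_\pm + k_{\pm,2}=m-l_\pm$. Substituting $k_{\pm,2}=m-l_\pm-N_\pm$ into the candidate relation $l_\pm+k_{\pm,2}=k_{\pm,1}=N_\pm$ and cancelling shows
\[
l_\pm + k_{\pm,2} = k_{\pm,1} \iff N_\pm = \tfrac{m}{2}.
\]

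The heart of the argument is the identity $N_+ + N_- = m$. To obtain it I would use the representation of symmetry from the proof of Theorem \ref{T:symm}: $q=\ga q^\sharp$ and $s=\ga z^{m-n}s^\sharp$ for a unimodular $\ga$, with $\om$ proper so $m\geq n$. Applying Lemma \ref{L:sumpound} to $s+(-i)q$ and using $(-iq)^\sharp=iq^\sharp$ together with $q^\sharp=\ov\ga\, q$ and $s^\sharp=\ov\ga z^{n-m}s$ (the latter being the polynomial $\ov\ga\,\wtil{s}$, where $s=z^{m-n}\wtil{s}$), a short computation collapses to
\[
(s-iq)^\sharp = \ov\ga\, z^{-l_-}(s+iq), \qquad l_-=m-\deg(s-iq).
\]
Since any polynomial of the form $r^\sharp$ has nonzero value at $0$, the left-hand side does not vanish at $0$; hence $z^{-l_-}(s+iq)$ must be a polynomial with nonzero constant term, which forces $s+iq$ to have a zero of multiplicity exactly $l_-$ at the origin. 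The transformation $r\mapsto r^\sharp$ sends the nonzero roots of $s-iq$ bijectively, with multiplicity, to the roots of $(s-iq)^\sharp$ via $\al\mapsto 1/\ov\al$, and by the displayed identity these roots are precisely the nonzero roots of $s+iq$. Counting the roots inside $\BD$: on one hand they are the reflections $1/\ov\al$ of the roots $\al$ of $s-iq$ lying outside $\ov\BD$, of which there are $k_{-,2}$; on the other hand they are the nonzero roots of $s+iq$ inside $\BD$, of which there are $N_+-l_-$ once the $l_-$-fold root at the origin is discounted. Thus $k_{-,2}=N_+-l_-$, and combining with $N_-+k_{-,2}=m-l_-$ yields $N_++N_-=m$.

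With $N_++N_-=m$ in hand the proposition follows at once. By Proposition \ref{P:selfadjext}, $T_\om^*$ admits a selfadjoint extension if and only if $N_+=N_-$; given the identity, this holds if and only if $N_+=N_-=m/2$, equivalently if and only if $N_+=m/2$, equivalently if and only if $N_-=m/2$, which by the first paragraph are exactly the two displayed conditions $l_++k_{+,2}=k_{+,1}$ and $l_-+k_{-,2}=k_{-,1}$. Finally, $N_\pm=m/2$ can occur only when $m=\deg(q)$ is even, giving the last assertion. I expect the main obstacle to be the multiplicity and degree bookkeeping at $0$ and at infinity hidden in the sharp identity: specifically, pinning down that the degree drop $l_-$ of $s-iq$ equals \emph{exactly} the order of vanishing of $s+iq$ at the origin, so that reflecting nonzero roots really converts the count $N_-$ into $m-N_+$ rather than yielding only an inequality.
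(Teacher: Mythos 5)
Your argument is correct and follows essentially the same route as the paper: the pivot is the identity $s\pm iq=\ga z^{l_\mp}(s\mp iq)^\sharp$ (which the paper isolates as Lemma \ref{L:ids} and which your Lemma \ref{L:sumpound} computation reproves), combined with reflection-counting of roots and Proposition \ref{P:selfadjext}. Your bookkeeping via $N_++N_-=m$ is just a repackaging of the paper's relations $k_{+,1}=l_-+k_{-,2}$ and $k_{-,1}=l_++k_{+,2}$, and your observation that $s\pm iq$ vanishes at $0$ to order exactly $l_\mp$ matches the paper's proof verbatim.
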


%

The basis for the proof of  Proposition \ref{P:selfadjext3} lies in the following lemma, which
clarifies the relation between $s+iq$ and $s-iq$ under the assumption that $T_\om^*$ is symmetric.

\begin{lemma}\label{L:ids}
Let $\om=s/q\in\Rat(\BT)$, with $s,q\in\cP$ coprime, be such that $T_\om^*$ is symmetric. Set $l_{\pm}=\deg(q)-\deg(s\pm iq)$ and let $\ga$ be the unimodular constant such that $q=\ga q^\sharp$. Then
\begin{equation}\label{ids}
s\pm iq=\ga z^{l_{\mp}}(s\mp iq)^\sharp.
\end{equation}
Moreover, we have $l_\pm=0$ if and only if $\om(0)=\pm i$. In particular, only one of $l_+$ and $l_-$ can be nonzero.
\end{lemma}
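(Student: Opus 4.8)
The plan is to turn the symmetry hypothesis into two polynomial identities and then read off \eqref{ids} by a single application of Lemma \ref{L:sumpound}. By Theorem \ref{T:symm}, $T_\om^*$ being symmetric means $\om=\om^*$, and the computation in the proof of that theorem (using $q=\ga q^\sharp$) shows this is equivalent to
\[
q=\ga q^\sharp,\qquad s=\ga z^{m-n}s^\sharp,\qquad m\geq n,
\]
where properness $m\geq n$ is forced (cf.\ Corollary \ref{C:degcons}). Equivalently $q^\sharp=\ov\ga\,q$ and $s^\sharp=\ov\ga\,z^{n-m}s$; the latter is a bona fide polynomial identity because $z^{m-n}$ divides $s$, so no genuine negative powers occur.

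Next I would compute the sharps of $s\mp iq$. Put $d_\pm=\deg(s\pm iq)$, so $l_\pm=m-d_\pm$, and note $(\mp iq)^\sharp=\pm i\,q^\sharp$. Applying Lemma \ref{L:sumpound} to the two summands $s$ (degree $n$) and $\mp iq$ (degree $m$) gives
\[
(s\mp iq)^\sharp=z^{d_\mp-n}s^\sharp\pm i\,z^{d_\mp-m}q^\sharp.
\]
Substituting $s^\sharp=\ov\ga\,z^{n-m}s$ and $q^\sharp=\ov\ga\,q$ makes both terms carry the common factor $z^{d_\mp-m}$, so they collapse to
\[
(s\mp iq)^\sharp=\ov\ga\,z^{d_\mp-m}(s\pm iq)=\ov\ga\,z^{-l_\mp}(s\pm iq).
\]
Multiplying through by $\ga z^{l_\mp}$ produces exactly $s\pm iq=\ga z^{l_\mp}(s\mp iq)^\sharp$, which is \eqref{ids}. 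Since $\deg(s\pm iq)\leq\max\{n,m\}=m$ we have $l_\pm\geq0$, so $z^{l_\mp}$ is a genuine polynomial factor and the relation is an identity of polynomials.

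For the remaining assertion I would evaluate \eqref{ids} at $z=0$. The right-hand side $\ga z^{l_\mp}(s\mp iq)^\sharp$ vanishes at $0$ precisely when $l_\mp>0$, because $(s\mp iq)^\sharp(0)$ is the conjugate of the (nonzero) leading coefficient of $s\mp iq$ and hence is nonzero. On the left, $(s\pm iq)(0)=s(0)\pm i\,q(0)=q(0)\bigl(\om(0)\pm i\bigr)$, and $q(0)\neq0$ since the poles of $\om$ lie on $\BT$. Therefore $l_\mp\neq0$ holds if and only if $\om(0)=\mp i$; relabelling, $l_\pm\neq0$ exactly when $\om(0)=\pm i$. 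Because $\om(0)$ is a single number that cannot equal both $i$ and $-i$, at most one of $l_+,l_-$ is nonzero.

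The step I expect to require the most care is the degree bookkeeping. When $m>n$ the term $\pm i q$ dominates and $\deg(s\pm iq)=m$, so $l_\pm=0$; but when $m=n$ the leading coefficients of $s$ and $\pm iq$ may cancel, dropping the degree and making one $l_\pm$ positive. In that regime the two summands in Lemma \ref{L:sumpound} have equal top degree while their sum has strictly smaller degree, so I must invoke the cancellation clause of that lemma rather than its generic form. One then checks that the negative exponents $d_\mp-n$ and $d_\mp-m$ occurring in the intermediate formula for $(s\mp iq)^\sharp$ are, after substitution, exactly reconciled into the single nonnegative power $z^{l_\mp}$ appearing in \eqref{ids}; properness $m\geq n$ is what guarantees that no genuine pole is ever introduced along the way.
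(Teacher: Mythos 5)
Your proof is correct, but it takes a genuinely more streamlined route than the paper's. The paper first establishes the characterization of when $l_\pm\neq 0$ by inspecting leading coefficients (using $q_0\overline{s_n}=\overline{q_m}s_{m-n}$ from condition (5) of Theorem \ref{T:symm}), and then proves \eqref{ids} by splitting into two cases: $\om(0)=0$, where $m>n$ forces $l_+=l_-=0$ and Lemma \ref{L:sumpound} applies in its generic form, and $\om(0)\neq 0$, where $m=n$ and the identity is verified coefficient by coefficient via $s_k+iq_k=\ga(\overline{s_{m-k}-iq_{m-k}})$. You instead obtain \eqref{ids} in a single uniform computation: rewriting the symmetry hypothesis as $s^\sharp=\ov{\ga}z^{n-m}s$ and $q^\sharp=\ov{\ga}q$, feeding these into the Laurent-polynomial identity $(s\mp iq)^\sharp=z^{d_\mp-n}s^\sharp\pm i z^{d_\mp-m}q^\sharp$ from Lemma \ref{L:sumpound}, and letting the two exponents collapse to the common factor $z^{-l_\mp}$; the fact that the left-hand side is a polynomial automatically forces $z^{l_\mp}$ to divide $s\pm iq$, so no separate case analysis is needed. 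You then reverse the paper's logical order by deducing the ``moreover'' statement from \eqref{ids} by evaluation at $z=0$, using that $(s\mp iq)^\sharp(0)$ is the conjugate of a nonzero leading coefficient. Both approaches are sound; yours avoids the coefficient bookkeeping at the price of working momentarily with negative powers of $z$, which you justify adequately. One remark: what you prove is ``$l_\pm\neq 0$ if and only if $\om(0)=\pm i$,'' which is also what the paper's own proof establishes and which is the only reading consistent with the closing sentence that at most one of $l_+,l_-$ is nonzero; the lemma as stated (``$l_\pm=0$ if and only if $\om(0)=\pm i$'') appears to contain a typo, and you were right to prove the corrected version.
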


\begin{proof}[\bf Proof]
Since $T_\om^*$ is symmetric, by assumption, $\om$ has the properties listed in Theorem \ref{T:symm}. In particular, $\om$ is proper, $m:=\deg(q)\geq \deg(s)=:n$, and $s=z^{m-n} \wtil{s}$ with $\wtil{s}$ self-inversive and the unimodular constants that establish the self-inversiveness of $\wtil{s}$ and $q$ coincide (equivalently, $q_0 \overline{s_n}=\overline{q_m}s_{m-n}$).

Note that $\deg(s\pm iq)\neq m$ occurs precisely when $\deg(s)=\deg(q)$ and the leading coefficients $s_m$ and $q_m$ of $s$ and $q$, respectively, satisfy $s_m\pm i q_m=0$, i.e., $s_m/q_m= \mp i$. Since $m=n$, the identity $q_0 \overline{s_n}=\overline{q_m}s_{m-n}$ shows
$\om(0)=s_0/q_0=\overline{s_m}/\overline{q_m}$. Hence $\deg(s\pm iq)\neq m$ holds if and only if
$\om(0)=\overline{\mp i}=\pm i$, as claimed.

We first prove \eqref{ids} for the case $\om(0)=0$. So assume $\om(0)=0$, or equivalently, $s(0)=0$. In this case $l_+=l_-=0$. Since $s=z^{m-n}\wtil{s}$ and $\wtil{s}(0)\neq 0$ (because $\wtil{s}$ is self-inversive), we have $m>n$. Also note that $m-n$ is equal to the multiplicity of 0 as a root of $s$. We now employ Lemma \ref{L:sumpound}, using that $\deg(s+iq)=m=\deg(iq)$, to obtain
\begin{align*}
\ga (s\mp iq)^{\sharp} & = z^{\deg(s+iq)-\deg(s)} \ga  s^\sharp \mp (-i) \ga  q^\sharp
=z^{m-n} \ga \wtil{s}^\sharp \pm i q
= z^{m-n} \wtil{s} \pm i q
=s\pm iq.
\end{align*}
Hence \eqref{ids} holds.

Now assume $\om(0)\neq 0$, i.e., $s(0)\neq 0$. In that case $s=\wtil{s}$. Hence $s$ is self-inversive with the same constant $\ga$ that establishes the self-inversiveness of $q$. This also yields $m=n$. Since $s$ and $q$ are self-inversive with the same constant $\ga$, we have
\[
\overline{s_{m-k}} q_{k} =\overline{q_{m-k}} \overline{s_{m-k}} \ga = \overline{q_{m-k}} s_{k}\quad \mbox{ for $k=0,\ldots, m$}.
\]
Hence for all $k$ we have
\[
\overline{s_{m-k}}(s_k+i q_k)= s_k (\overline{s_{m-k}} + i \overline{q_{m-k}})
\ands
\overline{q_{m-k}}(s_k+i q_k)= q_k (\overline{s_{m-k}} + i \overline{q_{m-k}}).
\]
In case $s_{m-k}=0$ and $q_{m-k}=0$, also $s_k=0$ and $q_k=0$, since $s_k=\ga \overline{s_{m-k}}$ and $q_k=\ga \overline{q_{m-k}}$, and thus $s_k+i q_k=0=\ga (\overline{s_{m-k}} + i \overline{q_{m-k}})$. If either $s_{m-k}\neq 0$ or $q_{m-k}\neq 0$, divide the first identity by $\overline{s_{m-k}}$ or the second identity by $\overline{q_{m-k}}$  to arrive at  $s_k+i q_k=\ga (\overline{s_{m-k}} + i \overline{q_{m-k}})$. Hence
\begin{equation}\label{ids2}
s_k+i q_k=\ga (\overline{s_{m-k} - i q_{m-k}}) \quad \mbox{ for $k=0,\ldots, m$}.
\end{equation}
In particular, $s_k+i q_k=0$ if and only if $s_{m-k} - i q_{m-k}=0$. It follows that $0$ is a root of $s\pm iq$ with multiplicity $l_\mp$. Comparing coefficients, it follows that the identities in \eqref{ids} correspond to the identities in \eqref{ids2}. Hence \eqref{ids} holds.
\end{proof}

\begin{proof}[\bf Proof of Proposition \ref{P:selfadjext3}]
Since $T_\om^*$ is assumed to be symmetric, \eqref{ids} holds. Together with the fact that the $\sharp$ operator reflects roots over $\BT$, this implies that the number of roots of $s\pm iq$ inside $\BT$ are equal to $l_\pm$ plus the number of roots of $s\mp iq$ outside $\BT$, counting multiplicities. In other words, we have
\begin{equation}\label{ks-ids}
k_{+,1}=l_- + k_{-,2} \ands k_{-,1}=l_+ + k_{+,2}.
\end{equation}
By Proposition \ref{P:selfadjext2}, $T_{\om}^*$ has a selfadjoint extension if and only if $s+iq$ and $s-iq$ have an equal number of roots inside $\BT$, again counting multiplicities, equivalently, $k_{+,1}=k_{-,1}$. Given \eqref{ks-ids}, it follows that $k_{+,1}=k_{-,1}$ is equivalent to $k_{+,1}=l_+ + k_{+,2}$, and likewise to $k_{-,1}=l_- + k_{-,2}$. This proves the two criteria for $T_{\om}^*$ to have a selfadjoint extension.

By Lemma \ref{L:ids}, either $l_+=0$ or $l_-=0$. Say $l_+=0$. Since $s+iq$ cannot have roots on $\BT$, we have $\deg(q)=\deg(s+iq)=k_{+,1}+k_{+,2}$. If $T_\om^*$ admits a selfadjoint extension, then we have $k_{+,1}=l_+ + k_{+,2}=k_{+,2}$. Hence $\deg(q)= 2 k_{+,1}$ is even. For $l_-=0$ the arguments goes similarly.
\end{proof}

Combining the fact that $T_\om^*$ cannot have a selfadjoint extension in case $\om=s/q\in\Rat(\BT)$, $s,q$ co-prime, and $\deg(q)$ odd with Corollary \ref{C:notR} immediately yields the following result.

\begin{corollary}\label{C:odd-line}
Let $\om=s/q\in\Rat(\BT)$, with $s,q\in\cP$ co-prime, be such that $T_\om^*$ is symmetric and $\deg(q)$ is odd. Then $\om(\BT)=\BR$.
\end{corollary}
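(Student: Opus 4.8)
The plan is to argue by contraposition, leveraging the two results explicitly flagged in the statement: the parity conclusion at the end of Proposition~\ref{P:selfadjext3} and the sufficiency criterion of Corollary~\ref{C:notR}. No new computation is needed; the corollary is essentially the logical juxtaposition of these two facts.

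First I would record a bookkeeping observation. Since $T_\om^*$ is symmetric, Theorem~\ref{T:symm} (equivalence of (1) and (2)) already gives $\om(\BT)\subset\BR$. Consequently the desired conclusion $\om(\BT)=\BR$ can fail only through the strict inclusion $\om(\BT)\subsetneq\BR$, that is, precisely when $\om(\BT)\neq\BR$. This is exactly the hypothesis appearing in Corollary~\ref{C:notR}, so I would then assume, towards a contradiction, that $\om(\BT)\neq\BR$.

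Under this assumption Corollary~\ref{C:notR} applies verbatim---its hypotheses are that $T_\om^*$ is symmetric and $\om(\BT)\neq\BR$---and yields that $T_\om^*$ admits a selfadjoint extension. On the other hand, the final clause of Proposition~\ref{P:selfadjext3} asserts that whenever $T_\om^*$ is symmetric and admits a selfadjoint extension, $\deg(q)$ must be even. This contradicts the standing hypothesis that $\deg(q)$ is odd. Hence the assumption $\om(\BT)\neq\BR$ is untenable, and combined with $\om(\BT)\subset\BR$ we conclude $\om(\BT)=\BR$.

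There is no genuine obstacle here: the entire substance is already carried by the parity statement in Proposition~\ref{P:selfadjext3} (itself resting on Lemma~\ref{L:ids} and Proposition~\ref{P:selfadjext2}) and by Corollary~\ref{C:notR}. The only point deserving a moment's attention is the reduction in the second paragraph---verifying, via Theorem~\ref{T:symm}, that symmetry already forces $\om(\BT)\subset\BR$, so that ``$\om(\BT)\neq\BR$'' genuinely coincides with the strict inclusion and Corollary~\ref{C:notR} is legitimately applicable.
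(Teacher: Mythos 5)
Your argument is correct and is exactly the paper's proof: the corollary is obtained by combining the parity conclusion of Proposition~\ref{P:selfadjext3} (odd $\deg(q)$ precludes a selfadjoint extension) with the contrapositive of Corollary~\ref{C:notR}. The extra remark that symmetry already forces $\om(\BT)\subset\BR$ is harmless but not needed, since Corollary~\ref{C:notR} is stated with the hypothesis $\om(\BT)\neq\BR$ verbatim.
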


The next example shows that also with $\deg(q)$ even it can occur that $T_\om^*$ does not admit a selfadjoint extension.

\begin{example}\label{E:nonselfadjeven}
Let $\om=s/q$ with
\[
s(z)=i(1+az+z^2),\ \mbox{ for some $0\neq a\in\BR$ }, \ands q(z)=1-z^2.
\]
Then $m=n$ and
\[
s^\sharp =-s,\quad q^\sharp =-q.
\]
So $T_\om^*$ is symmetric by Theorem \ref{T:symm} (5). Also, we have
\[
(s+iq)(z)=i(2+az)\ands (s-iq)(z)=iz(a+2z).
\]
Hence the number of roots of $s-iq$ inside $\BD$ is 1 if $|a|\geq 2$ and 2 if $0\neq |a|< 2$, while the number of roots of $s+iq$ inside $\BD$ is 1 if $|a|> 2$ and 0 if $0\neq|a|\leq 2$. Thus $T_\om^*$ admits a selfadjoint extension if and only if $|a|>2$.
\end{example}

\section{Comparison with the unbounded Toeplitz operator defined by  Sarason}\label{S:Sarason}

The Smirnov class $N^+$ consists of quotients $\frac{b}{a}$ with $a$ and $b$ $H^\infty$-functions such that the denominator $a$ is an outer function. The function $\vph = \frac{b}{a}\in N^+$ is said to be in \emph{canonical form} if $a(0) > 0$ and $\vert a\vert^2 + \vert b\vert^2 = 1$ on $\BT$. By Proposition 3.1 of \cite{S08}, every function $\vph\in N^+$ can be uniquely written in canonical form.

In \cite{S08}, Sarason investigated an unbounded Toeplitz operator $T_{\vph}^\tu{Sa}$ with symbol $\vph$ in $N^+$, which is defined by
\[
\Dom (T_\vph^\tu{Sa}) = \{ f\in H^2 : \varphi f \in H^2 \},\quad  T_\vph^\tu{Sa} f = \varphi f\ (f\in \Dom (T_\vph^{Sa})).
\]
More generally, $T_\vph^\tu{Sa}$ can be defined in this way for any holomorphic function $\vph$ on $\BD$, but for $T_\vph^\tu{Sa}$ to be densely defined, $\vph$ must be in $N^+$; see \cite[Lemma 5.2]{S08}.

Let $\varphi = \frac{b}{a}\in N^+$ be the canonical representation of $\vph$. Then it is shown in  Proposition 5.3 of \cite{S08} that $\Dom(T_\vph^\tu{Sa}) = aH^2$. The adjoint of the operator $T_\vph^\tu{Sa}$ is motivated by the action of the conjugate transpose of the matrix representation of $T_\vph^\tu{Sa}$, which is lower triangular. The domain of the adjoint operator is shown to contain the space $H(\overline{\BD})$ of functions that are analytic on some neighborhood of the closed unit disc $\overline{\BD}$, and the adjoint is equal to the closure of the operator on $H(\overline{\BD})$; see \cite[Lemmas 6.1 and 6.4]{S08}.

Let $\om=s/q\in\Rat(\BT)$ with $s,q\in\cP$ co-prime. Set $n=\degr(s)$ and $m = \degr(q)$. Assume $\om$ is proper, i.e., $n\leq m$. Then $\om^* (z) = z^{m-n}s^\sharp/q^\sharp\in\Rat(\BT)$. Since $q^\sharp$ has zeroes only on $\BT$ it is outer and thus $\om^* \in N^+$. While in general $T_\om$ and $T_\om^\tu{Sa}$ are different, the following proposition shows that $T_\om$ coincides with $T_{\om^*}^\tu{Sa}$, and hence $T_\om=T_\om^{**}=T_{\om^*}^\tu{Sa}$. Without the properness assumption, $\om^*$ is not in $N^+$, because $\om^*$ has a pole at 0, and hence $T_{\om^*}^\tu{Sa}$ is not defined.

\begin{proposition}\label{P:SaToep}
Let $\wtil{\om}=\wtil{s}/\wtil{q}\in\Rat(\BT)$ with $\wtil{s},\wtil{q}\in\cP$ co-prime. Then $\Dom(T_\wtil{\om}^\tu{Sa})=\wtil{q}H^2$ and $T_\wtil{\om}^\tu{Sa}=T_{\wtil{\om}}|_{\wtil{q}H^2}$. In particular, if $\om\in \Rat(\BT)$ is proper, then $T_{\om}^*=T_{\om^*}^\tu{Sa}$.
\end{proposition}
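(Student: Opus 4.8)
The plan is to compute the domain of the Sarason operator $T_\wtil{\om}^\tu{Sa}$ directly from its defining condition $\wtil{\om}f\in H^2$, identify its action, and then deduce the ``in particular'' claim by matching the resulting description against Theorem~\ref{T:main1'}. I would begin with the easy inclusion $\wtil{q}H^2\subseteq\Dom(T_\wtil{\om}^\tu{Sa})$: for $f=\wtil{q}g$ with $g\in H^2$ one has $\wtil{\om}f=\wtil{s}g$, and since $\wtil{s}$ is a polynomial it is bounded on $\BT$, so $\wtil{s}g\in H^2$; hence $f\in\Dom(T_\wtil{\om}^\tu{Sa})$ with $T_\wtil{\om}^\tu{Sa}f=\wtil{s}g$.

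The reverse inclusion is the heart of the argument. Suppose $f\in H^2$ satisfies $\wtil{\om}f=\wtil{s}f/\wtil{q}=:h\in H^2$. Then $\wtil{s}f=\wtil{q}h$, so this common value lies in $\wtil{s}H^2\cap\wtil{q}H^2$. Since $\wtil{s}$ and $\wtil{q}$ are co-prime, Lemma~\ref{L:incl3} gives $\wtil{s}H^2\cap\wtil{q}H^2=\wtil{s}\wtil{q}H^2$, whence $\wtil{s}f=\wtil{s}\wtil{q}g$ for some $g\in H^2$; cancelling $\wtil{s}$ (nonzero a.e.\ on $\BT$) yields $f=\wtil{q}g\in\wtil{q}H^2$. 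This establishes $\Dom(T_\wtil{\om}^\tu{Sa})=\wtil{q}H^2$. To match the actions I would invoke \eqref{DomRanId}, which gives $\wtil{q}H^2\subset\Dom(T_{\wtil{\om}})$, and then for $f=\wtil{q}g$ use $\wtil{\om}f=\wtil{s}g\in H^2$ to conclude $T_{\wtil{\om}}f=\BP(\wtil{s}g)=\wtil{s}g=T_\wtil{\om}^\tu{Sa}f$. Hence $T_\wtil{\om}^\tu{Sa}=T_{\wtil{\om}}|_{\wtil{q}H^2}$.

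For the ``in particular'' part, let $\om=s/q$ be proper, so that $\om^*=z^{m-n}s^\sharp/q^\sharp\in\Rat(\BT)$. I would first check that $z^{m-n}s^\sharp$ and $q^\sharp$ are co-prime: the root $0$ is excluded since $q^\sharp(0)=\ov{q_m}\neq 0$, and any shared nonzero root would lie on $\BT$ (being a root of $q^\sharp$, hence of $q=\ga q^\sharp$) and, as a root of $s^\sharp$ on $\BT$, would also be a root of $s$, contradicting the co-primality of $s$ and $q$. Thus $q^\sharp$ is the reduced denominator of $\om^*$, and applying the proposition with $\wtil{\om}=\om^*$ yields $\Dom(T_{\om^*}^\tu{Sa})=q^\sharp H^2$ and $T_{\om^*}^\tu{Sa}=T_{\om^*}|_{q^\sharp H^2}$. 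On the other hand, Theorem~\ref{T:main1'} (with $p=p'=2$) gives exactly $\Dom(T_\om^*)=q^\sharp H^2$ and $T_\om^*=T_{\om^*}|_{q^\sharp H^2}$. Comparing the two descriptions gives $T_\om^*=T_{\om^*}^\tu{Sa}$.

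I expect the reverse inclusion in the domain computation to be the main obstacle, since that is the one place where the co-primality of $\wtil{s}$ and $\wtil{q}$ is genuinely exploited, via Lemma~\ref{L:incl3}; by contrast the forward inclusion is immediate, the identification of actions is a one-line projection computation, and the final assertion is bookkeeping that aligns the present statement with Theorem~\ref{T:main1'} once the co-primality of $z^{m-n}s^\sharp$ and $q^\sharp$ is verified.
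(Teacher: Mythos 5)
Your argument is correct, but the key step is handled by a genuinely different route than the paper's. For the inclusion $\Dom(T_{\wtil{\om}}^\tu{Sa})\subseteq\wtil{q}H^2$ the paper does not argue directly from the defining condition; instead it computes the canonical form of $\wtil{\om}$: by the Fej\'er--Riesz theorem one finds a polynomial $r$ with $|r|^2=|\wtil{s}|^2+|\wtil{q}|^2$ on $\BT$, no roots in $\ov{\BD}$ (co-primality rules out roots on $\BT$) and $\arg r(0)=\arg\wtil{q}(0)$, so that $a=\wtil{q}/r$, $b=\wtil{s}/r$ is the canonical form by uniqueness, and then Sarason's Proposition~5.3 gives $\Dom(T_{\wtil{\om}}^\tu{Sa})=aH^2=\wtil{q}H^2$ since $1/r\in H^\infty$. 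Your route --- writing $\wtil{s}f=\wtil{q}h\in\wtil{s}H^2\cap\wtil{q}H^2=\wtil{s}\wtil{q}H^2$ via Lemma~\ref{L:incl3} and cancelling $\wtil{s}$ --- is more elementary and self-contained: it needs neither Fej\'er--Riesz nor the canonical-form machinery, only a lemma already proved in the paper. What the paper's approach buys in exchange is an explicit identification of the canonical form of a rational Smirnov-class function with poles on $\BT$, which embeds the result directly into Sarason's framework. The remaining steps (matching the actions, and the ``in particular'' claim via Theorem~\ref{T:main1'}) coincide with the paper's; in fact you are more careful than the paper in explicitly verifying that $z^{m-n}s^\sharp$ and $q^\sharp$ are co-prime before applying the first part of the proposition to $\om^*$.
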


\begin{proof}[\bf Proof]
We first show $\Dom(T_{\wtil{\om}}^\tu{Sa})=\wtil{q}H^2$. Let $\wtil{\om}=a/b$ be the canonical form of $\wtil{\om}$. As noted above, $\Dom(T_{\wtil{\om}}^\tu{Sa})=a H^2$. By the Fej\'er-Riesz Theorem there is a polynomial $r$ such that on $\BT$ we have $\vert r\vert^2 = \vert \wtil{s}\vert^2 + \vert \wtil{q}\vert^2$, $r$ has no roots in $\BD$ and $\tu{arg}(r(0))=\tu{arg}(\wtil{q}(0))$. The latter is possible since $\wtil{q}(0)\neq 0$ and implies $\wtil{q}(0)/r(0)>0$. Note that $r$ also has no roots on $\BT$, since $\wtil{s}$ and $\wtil{q}$ are co-prime. It follows that $\wtil{q}/r$ and $\wtil{s}/r$ are both $H^\infty$-functions, $\wtil{q}/r$ is outer and $\wtil{q}(0)/r(0)>0$. Hence $a=\wtil{q}/r$ and $b=\wtil{s}/r$, by the uniqueness of the canonical form. Also, since all the roots of $r$ are outside $\BT$, $r^{-1} H^2=H^2$, so that $a H^2= \wtil{q} H^2$.

Now let $f\in \Dom(T_\wtil{\om}^\tu{Sa})$, say $f= \wtil{q}h$ with $h\in H^2$. Then $T_\wtil{\om}^\tu{Sa} f=\wtil{\om} f= \wtil{s} h$. On the other hand, the fact that $\wtil{\om} f= \wtil{s} h$ and $\wtil{s} h\in H^2$ shows $T_\wtil{\om} f =\BP \wtil{s} h=\wtil{s} h$. Hence  $T_\wtil{\om}^\tu{Sa}=T_{\wtil{\om}}|_{\wtil{q}H^2}$.
\end{proof}

Next we employ some of the ideas from \cite{S08} to derive the following result. Recall that for a Hilbert space operator $T:\Dom(T)\to\cH$ a linear submanifold $\cD\subset \Dom(T)$ is called a {\em core} in case the graph $G(T|_{\cD})$ of $T|_{\cD}$ is dense in the graph $G(T)$ of $T$; cf., page 166 in \cite{K95}.

\begin{theorem}\label{T:core}
Let $\om\in\Rat(\BT)$. Then $H(\overline{\BD})$ is contained in $\Dom(T_\om)$. If $\om$ is proper, then $H(\overline{\BD})$ is a core of $T_\om$.
\end{theorem}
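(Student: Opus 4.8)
The plan is to treat the two assertions separately: the domain inclusion holds for every $\om\in\Rat(\BT)$ and I would get it by a principal-part decomposition, while the core property I would obtain by a direct graph-norm approximation built on the structural description of $T_\om$ recalled in \eqref{DomRanId}.

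First I would prove $H(\overline{\BD})\subset\Dom(T_\om)$. Fix $f$ analytic on an open neighborhood $U\supset\overline{\BD}$ and write $\om=s/q$. Then $sf$ is analytic on $U$, so the only singularities of $\om f=sf/q$ in $U$ are poles at the roots of $q$, all of which lie on $\BT$ because $\om\in\Rat(\BT)$. Collecting the principal parts of $sf/q$ at these poles gives a strictly proper rational function $\rho$ with poles only on $\BT$, i.e.\ $\rho\in\Rat_0(\BT)$, and the difference $g:=\om f-\rho$ has only removable singularities in $U$; hence $g$ extends analytically across $\overline{\BD}$ and is in particular bounded on $\BT$, so $g\in L^\infty\subset L^p$. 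Since also $f\in H(\overline{\BD})\subset H^p$, the representation $\om f=g+\rho$ exhibits $f\in\Dom(T_\om)$ (with $T_\om f=\BP g$). Note that properness is not used here; only the hypothesis that all poles of $\om$ lie on $\BT$ enters, precisely so that $\rho\in\Rat_0(\BT)$.

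For the core property I would exploit the explicit domain and action recalled in Section~\ref{S:Tom*}: writing $m=\deg(q)$, $n=\deg(s)$, every $f\in\Dom(T_\om)$ decomposes as $f=qh+r$ with $h\in H^2$ and $r\in\cP_{m-1}$, and then $T_\om f=sh+\wtil{r}$, where $\wtil{r}\in\cP_{n-1}$ is the fixed polynomial determined by $r$ alone via $rs=\wtil{r}q+r_2$. Choosing polynomials $h_n\to h$ in $H^2$ and setting $f_n:=qh_n+r$, each $f_n$ is a polynomial, hence lies in $H(\overline{\BD})$. Because $q,s\in\cP\subset H^\infty$, multiplication by $q$ and by $s$ is bounded on $H^2$, so $f_n\to f$ and $T_\om f_n=sh_n+\wtil{r}\to sh+\wtil{r}=T_\om f$ in $H^2$. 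Thus $(f_n,T_\om f_n)\to(f,T_\om f)$ in the graph norm, i.e.\ the graph of $T_\om|_{H(\overline{\BD})}$ is dense in the graph of the closed operator $T_\om$, which is exactly the core property. The step I expect to be decisive is ensuring \emph{graph} convergence rather than mere norm convergence, and the key observation that makes it work is that the polynomial correction $\wtil{r}$ depends only on $r$ and so is unchanged along the sequence $f_n$.

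An alternative, which is the route suggested by the placement of this theorem, is available in the proper case and ties the statement directly to Sarason's operator: if $\om$ is proper then $\om^*\in N^+$ and Proposition~\ref{P:SaToep} gives $T_\om^*=T_{\om^*}^{\tu{Sa}}$, whence $T_\om=T_\om^{**}=(T_{\om^*}^{\tu{Sa}})^*$. By \cite[Lemmas 6.1 and 6.4]{S08}, $H(\overline{\BD})$ lies in the domain of $(T_{\om^*}^{\tu{Sa}})^*$ and this adjoint is the closure of its restriction to $H(\overline{\BD})$, which is again precisely the core property. In this second route the main obstacle is purely organizational, namely transferring Sarason's ``closure on $H(\overline{\BD})$'' statement through the adjoint identity $T_\om=(T_{\om^*}^{\tu{Sa}})^*$; in the first route the analogous bookkeeping is confirming that subtracting the principal parts returns a function analytic across $\overline{\BD}$ (hence in $L^p$) with $\rho$ strictly proper.
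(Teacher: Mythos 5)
Your proposal is correct, but both halves take a genuinely different route from the paper. For the inclusion $H(\overline{\BD})\subset \Dom(T_\om)$ the paper rescales: it sets $\wtil{f}(z)=f(Rz)$, $\wtil{q}(z)=q(Rz)$, $\wtil{s}(z)=s(Rz)$ for suitable $R>1$, so that $\wtil{q}$ has no roots on $\BT$ and the decomposition $H^2=\wtil{q}H^2+\cP_{\deg(q)-1}$ applies, and then undoes the substitution to land directly in $qH^2+\cP_{m-1}$; your principal-part subtraction instead verifies membership straight from the definition \eqref{Tom} and is, if anything, shorter, though the paper's version has the small bonus of exhibiting the quotient $h$ as again lying in $H(\overline{\BD})$. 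For the core property the paper argues by duality: it first computes $T_\om k_\la=\overline{\om^*(\la)}\,k_\la$ (Lemma \ref{L:eval_funct}), deduces that any $(f,g)$ orthogonal to the graph of $T_\om|_{H(\overline{\BD})}$ satisfies $f+\om^*g\equiv 0$ on $\BD$, and then uses the identification $T_\om^*=T_{\om^*}^{\tu{Sa}}$ of Proposition \ref{P:SaToep} to conclude $g\in\Dom(T_\om^*)$ with $T_\om^*g=-f$, whence $(f,g)\perp G(T_\om)$; this is where properness enters, since $\om^*\in N^+$ requires it. Your direct graph-norm approximation $f=qh+r\mapsto f_j=qh_j+r$ with polynomial $h_j\to h$, resting on the observation that the correction term $\wtil{r}$ in $T_\om(qh+r)=sh+\wtil{r}$ depends only on $r$ (and on the uniqueness of the decomposition, since $qH^2\cap\cP_{m-1}=\{0\}$), is more elementary, bypasses reproducing kernels and Sarason's theory altogether, and in fact shows that the polynomials form a core of $T_\om$ for \emph{every} $\om\in\Rat(\BT)$, proper or not --- a slightly stronger conclusion than the one stated. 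Your alternative route via Sarason's Lemmas 6.1 and 6.4 and the identity $T_\om=(T_{\om^*}^{\tu{Sa}})^*$ is also sound and is the closest in spirit to what the paper actually does, though the paper prefers to rederive the closure statement self-contained rather than cite it.
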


\begin{proof}[\bf Proof of $H(\overline{\BD})\subset \Dom(T_\om)$]
Write $\om=\frac{s}{q}\in\Rat_0(\BT)$ with $s,q\in\cP$ coprime. Let $f\in H(\overline{\BD})$. Then there exists a $R>1$ such that $f$ is still analytic on an open neighborhood of the closed disc with radius $R$. Set $\wtil{f}(z)=f(Rz)$, $\wtil{q}(z)=q(Rz)$ and $\wtil{s}(z)=s(Rz)$. Then $\wtil{f}\in H^2$ and $\wtil{q}$ is a polynomial with no roots on $\BT$ and $\deg(q)=\deg(\wtil{q})$. By Theorem 3.1 in \cite{GtHJR1}, $H^2=\wtil{q} H^2+ \cP_{\deg(q)-1}$. Thus $\wtil{s}\wtil{f}=\wtil{q}\wtil{h}+\wtil{r}$ for some $\wtil{h}\in H^2$ and $\wtil{r}\in\cP$ with $\deg(\wtil{r})<\deg(q)$. Now set $r(z)=\wtil{r}(z/R)$ and $h(z)=\wtil{h}(z/R)$. Then $r\in\cP$ with $\deg(r)=\deg(\wtil{r})<\deg(q)$ and $h\in H^2$, even $h\in H(\overline\BD)$. Also, we have $s f =qh +r$. Thus $f\in\Dom(T_\om)$.
\end{proof}

Before proving the second claim of Theorem \ref{T:core} it is useful to consider the value of $T_\om$ when applied to the evaluation functional or reproducing kernel element $k_\lambda (z) = (1 - \overline{\lambda}z)^{-1}$, where $\la\in\BD$. Note that $k_\la\in H(\overline{\BD})$, hence $k_\la\in H^2$, and $k_\la$ has the reproducing kernel property for $H^2$:
\[
\tu{span}\{k_\la \colon \la\in\BD\}\mbox{ dense in $H^2$}
\ands
\inn{h}{k_\la}=h(\la) \quad (h\in H^2,\ \la\in\BD).
\]
See \cite{PR16} for a recent account of the theory of reproducing kernel Hilbert spaces and further references.

\begin{lemma}\label{L:eval_funct}
Let $\om=s/q\in\Rat(\BT)$, with $s,q\in\cP$ co-prime, be proper. Then
\[
T_\om k_\la= \overline{\om^*(\la)} k_\la \quad (\la\in\BD).
\]
\end{lemma}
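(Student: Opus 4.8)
The plan is to verify the claimed eigenvalue identity weakly, by pairing both sides against the dense domain $\Dom(T_\om^*)$ and exploiting $T_\om=T_\om^{**}$. First I observe that $k_\la\in H(\overline{\BD})\subset\Dom(T_\om)$ by the already-proven first part of Theorem \ref{T:core}, so $T_\om k_\la$ is a genuine element of $H^2$; it therefore suffices to show that $\langle T_\om k_\la,g\rangle=\langle \overline{\om^*(\la)}k_\la,g\rangle$ for every $g$ in a dense subset of $H^2$. The natural such subset is $\Dom(T_\om^*)=q^\sharp H^2$ (Theorem \ref{T:main1'}), which is dense because $q^\sharp$, having all its roots on $\BT$, is outer. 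For $g\in\Dom(T_\om^*)$ the defining adjoint relation, together with the conjugate symmetry of the inner product and the reproducing property $\langle h,k_\la\rangle=h(\la)$, gives
\[
\langle T_\om k_\la,g\rangle=\langle k_\la,T_\om^* g\rangle=\overline{(T_\om^* g)(\la)}.
\]

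The key step is to evaluate $(T_\om^* g)(\la)$ explicitly, and this is exactly where properness is used. Writing $g=q^\sharp v$ with $v\in H^2$, Theorem \ref{T:main1'} gives $T_\om^* g=T_{z^{m-n}}s^\sharp v$; since $\om$ is proper we have $m\geq n$, so $T_{z^{m-n}}$ is simply multiplication by the analytic monomial $z^{m-n}$ and hence $T_\om^* g=z^{m-n}s^\sharp v$. On the other hand, properness also guarantees that $\om^*=z^{m-n}s^\sharp/q^\sharp\in N^+$ is analytic on $\BD$, and $\om^* g=(z^{m-n}s^\sharp/q^\sharp)(q^\sharp v)=z^{m-n}s^\sharp v$. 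Thus $T_\om^* g=\om^* g$ as elements of $H^2$, so $(T_\om^* g)(\la)=\om^*(\la)g(\la)$ for $\la\in\BD$.

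Combining the two observations yields $\langle T_\om k_\la,g\rangle=\overline{\om^*(\la)g(\la)}=\overline{\om^*(\la)}\,\overline{g(\la)}$, while the reproducing property again gives $\langle \overline{\om^*(\la)}k_\la,g\rangle=\overline{\om^*(\la)}\langle k_\la,g\rangle=\overline{\om^*(\la)}\,\overline{g(\la)}$. Hence the two sides agree on all $g\in\Dom(T_\om^*)$, and since this domain is dense in $H^2$ we conclude $T_\om k_\la=\overline{\om^*(\la)}k_\la$, as required.

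I expect the only real obstacle to be the bookkeeping behind the identity $T_\om^* g=\om^* g$: one must confirm that the abstract adjoint action supplied by Theorem \ref{T:main1'} genuinely coincides with honest multiplication by the analytic function $\om^*$, and that $\om^* g$ lands in $H^2$ rather than in some larger space. Both points rest squarely on the hypothesis $m\geq n$, which makes $z^{m-n}$ a multiplier (not a backward-shift-type contraction) and places $\om^*$ in $N^+$ by removing its pole at $0$; without properness the evaluation of $(T_\om^* g)(\la)$ would fail.
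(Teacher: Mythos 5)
Your argument is correct, but it takes a genuinely different route from the paper. The paper computes $T_\om k_\la$ directly from the definition: writing $s k_\la = qg+r$ with $r\in\cP_{m-1}$, it observes that $(1-\ov{\la}z)g=(s+(1-\ov{\la}z)r)/q$ lies in $\Rat(\BT)\cap H^2$ and hence must be a polynomial, so $g=\wtil{r}k_\la$; a degree count using properness forces $\wtil{r}$ to be a constant $\wtil{c}$, and evaluating $s+(1-\ov{\la}z)r=q\wtil{c}$ at $z=1/\ov{\la}$ identifies $\wtil{c}=\ov{\om^*(\la)}$. You instead run a duality argument: since $k_\la\in H(\ov{\BD})\subset\Dom(T_\om)$ (by the part of Theorem \ref{T:core} proved before the lemma, so no circularity), you pair $T_\om k_\la$ against the dense subspace $\Dom(T_\om^*)=q^\sharp H^2$, where Theorem \ref{T:main1'} shows $T_\om^*$ acts as honest multiplication by the $\BD$-analytic function $\om^*=z^{m-n}s^\sharp/q^\sharp$ (properness making $T_{z^{m-n}}$ a forward shift and removing the pole at $0$), and then the reproducing property finishes the computation. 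All the ingredients you invoke are available and correctly used: $k_\la\in\Dom(T_\om)$ justifies the adjoint identity $\inn{T_\om k_\la}{g}=\inn{k_\la}{T_\om^* g}=\ov{(T_\om^* g)(\la)}$, and $q^\sharp H^2$ is dense since $q^\sharp$ has no zeroes in $\BD$. The trade-off: the paper's proof is elementary and self-contained, needing only the defining relation $sf=qg+r$, and it exhibits the eigenvector structure constructively; yours is shorter modulo Theorem \ref{T:main1'} but leans on the full strength of that theorem (the exact domain $q^\sharp H^2$ and the action formula), and it is essentially the standard ``reproducing kernels are eigenvectors of adjoints of multiplication operators'' principle, read through the identification $T_\om=(T_{\om^*}^{\tu{Sa}})^*$ that the paper only establishes later in Proposition \ref{P:SaToep}.
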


\begin{proof}[\bf Proof]
Suppose $g = T_\om k_\lambda$ then $s(z)(1-\overline{\lambda}z)^{-1} = q(z)g(z) + r(z)$, where $r\in\cP_{m - 1}$. Here $m=\deg(q)$. Hence $(1-\overline{\la}z) g=(s+(1-\overline{\la}z)r)/q$ is in $\Rat(\BT)$ as well as in $H^2$. This can only occur if $(1-\overline{\la}z) g$ is a polynomial, i.e., $g= k_\la \wtil{r}$ for some $\wtil{r}\in\cP$. Thus $s+(1-\overline{\la}z)r=q \wtil{r}$. Since $\om$ is proper, the degree of the left hand side is at most $m$. But then $\wtil{r}$ is constant, say with value $\wtil{c}$. This shows $T_\om k_\la =\wtil{c} k_\la$.

To determine $\wtil{c}$ we evaluate the identity $s+(1-\overline{\la}z)r=q \wtil{c}$ at $1/\overline{\la}$. This gives $s(1/\overline{\la})= q(1/\overline{\la})\wtil{c}$. Note that
\[
s^\sharp(\la)=\la^n \overline{s(1/\overline{\la})} \ands
q^\sharp(\la)=\la^m \overline{q(1/\overline{\la})},
\]
where $n=\deg(s)$. Hence
\[
s(1/\overline{\la})= \overline{\la}^{-n} \overline{s^\sharp(\la)}
\ands
q(1/\overline{\la})= \overline{\la}^{-m} \overline{q^\sharp(\la)}.
\]
This gives
\[
\wtil{c}
=\frac{\overline{\la}^{-n} \overline{s^\sharp(\la)}}{\overline{\la}^{-m} \overline{q^\sharp(\la)}}
=\overline{\left(\frac{\la ^{m-n}s^\sharp(\la)}{q^\sharp(\la)} \right)}=\overline{\om^*(\la)}. \qedhere
\]
\end{proof}

\begin{proof}[\bf Proof of Theorem \ref{T:core}]
It remains to prove that $H(\overline{\BD})$ is a core for $T_\om$ in case $\om$ is proper. So, assume $\om$ is proper. We need to show that the graph of $T_\om|_{H(\overline{\BD})}$ is dense in the graph of $T_\om$. In other words, let $f,g\in H^2$ with $(f,g)$ perpendicular to $G(T_\om|_{H(\overline{\BD})})$, then we need to show $(f,g)$ is perpendicular to $G(T_\om)$. Since $k_\la\in H(\overline{\BD})$, for $\la\in\BD$, we have
\begin{align*}
  0 & = \inn{(f,g)}{(k_\la, T_\om k_\la)}
  =\inn{f}{k_\la} + \inn{g}{\overline{\om^*(\la)} k_\la}
  =f(\la)+ \om^*(\la) g(\la)\quad (\la\in\BD).
\end{align*}
Hence $\om^* g=-f$. In particular, $\om^* g\in H^2$. Thus $g\in \Dom(T_{\om^*}^\tu{Sa})=\Dom(T_\om^*)$ and $T_{\om}^* g =-f$, by Proposition \ref{P:SaToep}. For any $h\in \Dom(T_\om)$ we have
\begin{align*}
\inn{(f,g)}{(h,T_\om h)} &
=\inn{(-T_{\om}^* g,g)}{(h,T_\om h)}
=-\inn{T_\om^* g}{h} +\inn{g}{T_\om h}=0.
\end{align*}
This proves our claim.
\end{proof}

In Section 8 of \cite{S08}, Sarason introduced the class of closed, densely defined operators $T$ on $H^2$ which satisfy
\begin{itemize}
  \item[(1)] $T_z\, \Dom(T) \subset \Dom(T)$;
  \item[(2)] $T_z^* T T_z =T$;
  \item[(3)] $f\in\Dom(T)$, $f(0)=0$ $\Rightarrow$ $T_z^* f \in \Dom(T)$.
\end{itemize}
This class of operators was further studied by Rosenfeld in \cite{R16}, see also \cite{R13}, in which he referred to such operators as Sarason-Toeplitz operators. The operators $T^\tu{Sa}_\vph$, for $\vph\in N^+$, are Sarason-Toeplitz operators, and the class of operators is closed under taking adjoints, by Proposition 2.1 in \cite{R16}. Hence, by Proposition \ref{P:SaToep}, $T_\om$ is a Sarason-Toeplitz operator whenever $\om\in\Rat(\BT)$ is proper. We show that in fact $T_\om$ is a Sarason-Toeplitz operator for any $\om\in\Rat$.

\begin{proposition}
Let $\om\in \Rat$. Then $T_\om$ on $H^2$ is a Sarason-Toeplitz operator.
\end{proposition}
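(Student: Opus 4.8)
The plan is to verify Sarason's three defining properties directly from the definition \eqref{Tom} of $\Dom(T_\om)$, specialized to $p=2$, where $T_z$ is the forward shift and $T_z^*$ the backward shift $S^*$; both are bounded with full domain $H^2$. Closedness and dense definedness of $T_\om$, which are needed for $T_\om$ to belong to Sarason's class at all, are already recorded in the introduction. The recurring technical ingredient I would isolate first is a pair of bookkeeping facts about how $\Rat_0(\BT)$ transforms under multiplication by $z^{\pm1}$: if $\rho\in\Rat_0(\BT)$ then $z\rho=c+\rho'$ for a constant $c$ and some $\rho'\in\Rat_0(\BT)$ (polynomial division, since multiplying a strictly proper function by $z$ raises the numerator degree by one but keeps the poles on $\BT$), and $z^{-1}\rho=\alpha z^{-1}+\rho''$ for a constant $\alpha$, the residue at $0$, and some $\rho''\in\Rat_0(\BT)$ (partial fractions, isolating the newly created pole at $0$). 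Crucially, neither statement uses properness or the location of the poles of $\om$, which is exactly what lets the argument cover arbitrary $\om\in\Rat$.

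For property (1) I take $g\in\Dom(T_\om)$ with $\om g=f+\rho$, $f\in L^2$, $\rho\in\Rat_0(\BT)$, and multiply by $z$: $\om(zg)=zf+z\rho=(zf+c)+\rho'$ exhibits $zg\in\Dom(T_\om)$, since $zf+c\in L^2$ and $\rho'\in\Rat_0(\BT)$. The same computation run in reverse is the core of property (2). There I will show $\Dom(T_z^*T_\om T_z)=\{g\in H^2:\ zg\in\Dom(T_\om)\}=\Dom(T_\om)$: one inclusion is property (1), and for the other, if $zg\in\Dom(T_\om)$ with $\om(zg)=f+\rho$, then $\om g=z^{-1}f+z^{-1}\rho=z^{-1}(f+\alpha)+\rho''$, where $z^{-1}(f+\alpha)\in L^2$ and $\rho''\in\Rat_0(\BT)$ place $g$ back in $\Dom(T_\om)$.

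To finish property (2) I would compute the action. Writing $h=T_\om g=\BP f$ and splitting $f=h+f_-$ into analytic and anti-analytic parts, I get $T_\om(zg)=\BP(zf)+c=zh+\widehat{f}(-1)+c$, the constant $\widehat{f}(-1)$ arising from $\BP(z f_-)$. Since $S^*(zh)=h$ and $S^*$ annihilates constants, $T_z^*T_\om T_z\,g=h=T_\om g$, giving (2). Property (3) then falls out of the domain analysis: if $f\in\Dom(T_\om)$ with $f(0)=0$, then $f=zg$ with $g=S^*f=T_z^*f\in H^2$, and the reverse inclusion proved for (2) yields $g\in\Dom(T_\om)$, i.e. $T_z^*f\in\Dom(T_\om)$.

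The main obstacle I anticipate is keeping the constants honest in the action computation for (2): both the polynomial-division constant $c$ from $z\rho$ and the Fourier coefficient $\widehat{f}(-1)$ from $\BP(z f_-)$ appear inside $T_\om(zg)$, and I must confirm that $S^*$ kills both while returning $h$ from $zh$, so that the net effect is precisely the identity. To read $T_\om(zg)$ off the rewritten expression I also need uniqueness of the decomposition $\om g=f+\rho$, which holds because for $1<p<\infty$ a nonzero element of $\Rat_0(\BT)$ can never lie in $L^p$. Everything else is routine.
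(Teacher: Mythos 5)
Your proof is correct, but it takes a genuinely different route from the paper. The paper first treats $\om\in\Rat(\BT)$ using the explicit description $\Dom(T_\om)=qH^2+\cP_{\deg(q)-1}$: properties (1) and (2) are quoted from Lemma 2.3 of \cite{GtHJR1}, and (3) follows from the stronger fact that this domain is invariant under $T_z^*$ (writing $f=qh+r$ and expanding $T_z^*f=qT_z^*h+h(0)T_z^*q+T_z^*r$). The general case is then reduced to this one via the factorization $T_\om=T_{\om_-}T_{z^\kappa\om_0}T_{\om_+}$ of Lemma 5.1 in \cite{GtHJR1}, using that the outer factors are bounded, boundedly invertible, and commute appropriately with $T_z$ and $T_z^*$. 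You instead work uniformly from the definition \eqref{Tom} for arbitrary $\om\in\Rat$, with the two elementary bookkeeping facts that $z\rho$ and $z^{-1}\rho$ for $\rho\in\Rat_0(\BT)$ decompose as (constant or $\alpha z^{-1}$) plus an element of $\Rat_0(\BT)$; the domain identity $\{g:zg\in\Dom(T_\om)\}=\Dom(T_\om)$ then yields (1), the reverse inclusion of (2)'s domain computation, and (3), while the action computation with $\BP(zf_-)=\hat f(-1)$ settles (2). Your uniqueness remark (a nonzero element of $\Rat_0(\BT)$ is never in $L^p$) is exactly what makes $T_\om(zg)$ readable off your decomposition, so the argument is complete. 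What your approach buys is self-containedness and immediate generality: it bypasses the Wiener--Hopf-type factorization and the explicit domain formulas altogether, and (as the paper notes only in passing at the very end) it transfers verbatim to $H^p$ with $T_z^*$ replaced by $T_{z^{-1}}$. What the paper's approach buys is brevity, since the key properties for $\om\in\Rat(\BT)$ were already established elsewhere.
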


\begin{proof}[\bf Proof]
First consider $\om\in\Rat(\BT)$. That $T_\om$ satisfies (1) and (2) was proved in \cite[Lemma 2.3]{GtHJR1}. We claim that $T_{z}^*\, \Dom(T_\om)\subset \Dom(T_\om)$. Write $\om =s/q$ with $s,q\in \cP$ co-prime. Then $\Dom(T_\om)=q H^2 + \cP_{\deg(q)-1}$. Let $f= q h +r\in\Dom(T_\om)$ with $h\in H^2$ and $r\in\cP$, $\deg(r)<\deg(q)$. Then $T_z^* f= q T_z^* h + h(0) T_z^* q + T_z^* r$, which is in $q H^2+\cP_{\deg(q)-1}=\Dom(T_\om)$. Hence $T_\om$ is a Sarason-Toeplitz operator in case $\om\in \Rat(\BT)$.

Now take $\om\in \Rat$ arbitrarily. By Lemma 5.1 in \cite{GtHJR1}, see also Section \ref{S:general} above, $\om= \om_- z^\kappa \om_0 \om_+$ with $\kappa\in\BZ$, and $\om_-$, $\om_0$ and $\om_+$ in $\Rat$ with zeroes and poles only inside, on or outside $\BT$, respectively. In particular, $\om_0\in\Rat(\BT)$, $\om_-$ and $\om_-^{-1}$ are both anti-analytic, and $\om_+$ and $\om_+^{-1}$ are both analytic. Also, $T_\om=T_{\om_-} T_{z^\kappa \om_0} T_{\om_+}$. Note that $z^\kappa \om_0\in \Rat(\BT)$ in case $\kappa\geq 0$ and  $T_{z^\kappa \om_0}=T_{z^\kappa}T_{\om_0}$ in case $\kappa<0$ (by \cite[Lemma 5.3]{GtHJR1}). In both cases it now easily follows that $T_{z^\kappa\om_0}$ is a Sarason-Toeplitz operator. The claim for $T_\om$ follows since $T_{\om_+}^{\pm 1} T_z=T_z T_{\om_+}^{\pm 1}$ and $T_{\om_-}^{\pm 1} T_z^*=T_z^* T_{\om_-}^{\pm 1}$.
\end{proof}

In fact, by the same arguments one can show that $T_\om$ on $H^p$, $1<p<\infty$, satisfied (1)-(3) in case $T_z^*$ is replaced by $T_{z^{-1}}$.

\end{document}